\definecolor{black}{rgb}{0.0, 0.0, 0.0}
\definecolor{red}{rgb}{1.0, 0.5, 0.5}
\newcommand{\margnote}[1]{
	\ifthenelse{\boolean{shownotes}}%
	{\marginpar{\raggedright\tiny\texttt{#1}}}%
	{}%
}
\newcommand{\hole}[1]{
	\ifthenelse{\boolean{shownotes}}%
	{\begin{center} \fbox{ \rule {.25cm}{0cm} \rule[-.1cm]{0cm}{.4cm}
				\parbox{.85\textwidth}{\begin{center} \texttt{#1}\end{center}} \rule
				{.25cm}{0cm}}\end{center}} {} }
\title[Propagation of chaos for collective behavior models]{Collective behavior models with vision geometrical constraints: truncated noises and propagation of chaos}
\author[Choi]{Young-Pil Choi}
\address[Young-Pil Choi]{\newline Department of Mathematics
	\newline Inha University, Incheon 402--751, Republic of Korea}
\email{ypchoi@inha.ac.kr}
\author[Salem]{Samir Salem}
\address[Samir Salem]{\newline Centre de Math\'ematiques et Informatique (CMI), \newline
	Universit\'e de Provence, Technop\^ole Ch\^ateau-Gombert, Marseille, France}
\email{samir.salem@univ-amu.fr}
\numberwithin{equation}{section}
\newtheorem{theorem}{Theorem}[section]
\newtheorem{lemma}{Lemma}[section]
\newtheorem{proposition}{Proposition}[section]
\newtheorem{remark}{Remark}[section]
\newtheorem{definition}{Definition}[section]
\newcommand{\R}{\mathbb R}
\newcommand{\pp}{\mathcal P}
\newcommand{\e}{\varepsilon}
\newcommand{\lal}{\langle}
\newcommand{\ral}{\rangle}
\newcommand{\mc}{\mathcal{C}}
\newcommand{\lt}{\left}
\newcommand{\rt}{\right}
\newcommand{\pa}{\partial}
\newcommand{\mb}{\mathbf{1}}
\newcommand{\bq}{\begin{equation}}
	\newcommand{\eq}{\end{equation}}
\newcommand{\om}{\Omega}
\newcommand{\LL}{\mathcal{L}}
\newcommand{\bv}{\mathbb{B}^d_{V_{m}}}
\newcommand{\mo}{\mathcal{O}}
\newcommand{\E}{\mathbb{E}}
\begin{document}
	\allowdisplaybreaks
	
	\date{\today}
	
	
	\keywords{Mean-field limit, truncated diffusion, collective behavior, propagation of chaos, sensitivity region, stochastic integral inclusion system}
	
	\begin{abstract} We consider large systems of stochastic interacting particles through discontinuous kernels which has vision geometrical constrains. We rigorously derive a Vlasov-Fokker-Planck type of kinetic mean-field equation from the corresponding stochastic integral inclusion system. More specifically, we construct a global-in-time weak solution to the stochastic integral inclusion system and derive the kinetic equation with the discontinuous kernels and the inhomogeneous noise strength by employing the $1$-Wasserstein distance.
\end{abstract}
	
	\maketitle \centerline{\date}

	\tableofcontents

	%
	%
	%
	%
\section{Introduction}\label{intro}
	
In this paper, 	we are interested in the propagation of chaos for stochastic integral equations (in short, SIEs) describing collective behavior of individuals with vision geometrical constraints. Let $(\Omega,\mathcal{F},(\mathcal{F}_t)_{t\geq 0},\mathbb{P})$ be a probability space endowed with a filtration $(\mathcal{F}_t)_{t \geq 0}$. Here $\Omega$ is the random set, $\mathbb{P}$ and $\mathcal{F}$ are measure and $\sigma$-algebra on that set, respectively. On that probability space, let $\{ B^i_t\}_{i=1}^N$ be $N$ independent $d$-dimensional Brownian motions. In this setting, our main SIEs are given by
\begin{equation}\label{sys_sde_T}
\left\{ \begin{array}{ll}
\displaystyle X_t^i = X_0^i+\int_0^tV_s^i\,ds, \quad i=1,\cdots,N, \quad t \geq 0, & \\[3mm]
\displaystyle V_s^i = V_0^i+\int_0^tF[\mu_s^N](X_s^i,V_s^i)\,ds+\sqrt{2\sigma}\int_0^t\mathcal{R}(V_s^i)\, dB_s^i, \quad \mu_s^N:=\frac{1}{N}\sum_{i=1}^{N}\delta_{(X_s^i,V_s^i)}, 
\end{array} \right.
\end{equation}
where $\mathcal{R} \in \mc^2(\R^d)$ is a truncation function compactly supported in $\bv := \{ x \in \R^d : |x| < V_m\}$. Here $X_t^i$ and $V_t^i$ are position and velocity of $i$-th particle at time $t \geq 0$, respectively, 
and $F[\mu]$ denotes the velocity alignment force given by
\[
F[\mu](x,v) := \int_{\R^d \times \R^d} \mb_{K(v)}(y - x)(w-v)\mu(dy,dw) \quad \mbox{for} \quad \mu \in \pp(\R^d \times \R^d),
\]
where $\mb_{K(v)}$ is the indicator function on the vision set $K(v)$, which is called a communication weight. 

The system \eqref{sys_sde_T} without the noise and vision geometrical constrains is refereed as the Cucker-Smale model \cite{CS}, which is proposed to describe velocity alignment behaviors of individuals, such as flocks of birds or schools of fish, etc. Due to the presence of vision geometrical constraints in \eqref{sys_sde_T}, which comes from $\mb_{K}$ in the interaction force $F$, the individuals are only interacting with others in the velocity dependent region $K$ and trying to align their velocities with others. The system \eqref{sys_sde_T} has a diffusion however it smoothly degenerates when the speed of individuals increases to $V_{m}$. This enables us to consider the situation where the particles moving with higher velocities are less affected by the noises. From this consideration, we can show that the uniform boundedness of speed in time for the system \eqref{sys_sde_T} almost surely under suitable assumption on the initial data, see Lemma \ref{vel_bou_trun}. Thus, the system have a generic property of the velocity alignment models that the speed of individuals cannot be too high.
		
In the original Cucker-Smale model \cite{CS}, a bounded Lipschitz communication weight is considered and obtained the flocking estimate under some condition on the initial configurations. In \cite{MT}, a normalized communication weight is taken into account to deal with interactions between individuals through not only the distance between them but also their relative distance. Recently, in \cite{ACHL, CCH2, CCMP, Pes}, the collision avoidance between individuals is observed by considering a singular communication weight. The influence of noises in Cucker-Smale type models showing flocking or non-flocking behavior is studied in \cite{AH, Choi, DFT,HLL, TLY}. We refer the reader to \cite{CCP, CHL} and the references therein for general survey of flocking models.

Mean field limit and propagation of chaos are some challenging topics in the analysis of partial differential equations(in short, PDEs), which arise in the context of interacting multi-agent systems. Indeed, it provides a rigorous justification of the continuum models. For the original Cucker-Smale model which has regular force fields, the rigorous mean-field limit, existence of weak solutions are studied in \cite{CCR2, CFRT, HL}, see also \cite{CH17, dobru, Mc} for more general types of equations. More recently, Vlasov systems with bounded kernels are taken into account in \cite{JW}, however, their result cannot be directly applied for the system \eqref{sys_sde_T} since the interaction between particles not only depend on their relative distance but also on some topological considerations. For the particle system with singular or non-Lipschitz kernels, the rigorous derivation of continuum descriptions is studied in \cite{BCC,CCH, HJ, Hol, LP}.

For the deterministic case, i.e., the system \eqref{sys_sde_T} without noises, the rigorous derivation of mean-field limit model is studied in \cite{CCHS} in the large particle limit $N \to \infty$. Since the force fields in \eqref{sys_sde_T} are not continuous, the differential inclusion system together with the extended boundary set is introduced, and the quantitative error estimate between solutions to that system and weak solutions to the limiting kinetic equation is obtained. Our main purpose of this paper is to extend the result in \cite{CCHS} to the stochastic case under the same assumption on the sensitivity regions $K$. To be more precise, we will show that the $N$ interacting processes $(X_t^i,V_t^i)$ of the system \eqref{sys_sde_T} respectively well approximates as $N \to \infty$ the processes $(Y_t^i, W_t^i)$ to the following kinetic  McKean-Vlasov type system:
\begin{equation}\label{sys_NLS_T}
\left\{ \begin{array}{ll}
\displaystyle Y_t^i =Y^i_0+\int_0^t W_s^i\,ds, \quad i=1,\cdots,N,~~t > 0, & \\[3mm]
\displaystyle W_t^i =W^i_0+\int_0^t F[f_s](Y_s^i,W_s^i)\,ds + \sqrt{2\sigma}\int_0^t\mathcal{R}(W_s^i)\,dB_s^i, \quad \mathcal{L}(Y_t^i,W_t^i) = f_t, &  \\[3mm]
\left( Y_0^i, W_0^i \right) =\left( X^i_0, V^i_0 \right), \quad i=1,\cdots,N. & 
\end{array} \right.
\end{equation}
Then, by applying It\^o formula, we find that the probability density function $f_t$ is governed by
\bq\label{sys_kin_Trun}
\pa_t f_t + v \cdot \nabla_x f_t + \nabla_v \cdot (F[f_t]f_t) = \sigma\Delta_v\lt(\mathcal{R}^2(v) f_t \rt) , \quad (x,v) \in \R^d \times \R^d,\quad t > 0, 
\eq
with the initial data
\bq\label{ini_sys_kin_Trun}
f_t(x,v)|_{t=0} =: f_0(x,v), \quad (x,v) \in \R^d \times \R^d.
\eq

We emphasize that types of SIEs \eqref{sys_sde_T}, SIEs with discontinuous force fields and truncated diffusion, have not been treated so far to the best of authors' knowledge. It is also worth noticing that since the force fields are discontinuous, the existence of solutions to the SIEs \eqref{sys_sde_T} cannot be obtained by the classical theory of SIEs. Furthermore, it is not possible to use Girsanov's theorem to get a weak notion of solutions as in \cite{HS} due to the consideration of additive noises. Thus our strategy is to replace the SIEs \eqref{sys_sde_T} by a system of stochastic integral inclusion system. For this, we need to use the generalized boundary set $\widetilde{\pa}K$ defined in Definition \ref{def_gb} below as in \cite{CCHS} where the differential inclusion system is introduced for the existence of particle system without noises. To the best of authors' knowledge, there exist no available general integral inclusion theories in the stochastic framework as convenient as Fillipov's theory \cite{Fil} in the deterministic setting. Thus, in the current work, we introduce the corresponding stochastic inclusion system to \eqref{sys_sde_T} and give details of constructing the global-in-time existence of solutions to that system.

{\bf Notations.-} We introduce several notations used throughout the paper. $| \cdot |$ and $\lal \cdot, \cdot \ral$ denote the Euclidean distance and the standard inner product on $\R^d$, respectively. We also use the notation $| \cdot |$ for the Lebesgue measure of some set or the cardinal of finite index sets when there is no confusion. $\pp(\R^d \times \R^d)$ and $\pp_p(\R^d \times \R^d)$ stand for the sets of all probability measures and probability measures with finite moments of order $p \in [1,\infty)$ on $\R^d \times \R^d$, respectively. For a function $f(x,v)$, $\|f\|_{L^p}$ represents the usual $L^p(\R^d \times \R^d)$-norm. For $p \in [1,\infty]$ and $T > 0$, $L^p(0,T; E)$ is the set of the $L^p$ functions from an interval $(0,T)$ to a Banach space $E$. We denote by $C$ a generic positive constant. For a set $A \subset \R^d$, int($A$) and cl($A$) represent the interior and closure of $A$, respectively, and $Tr(M)$ denotes the trace of a matrix $M \in \R^d \times \R^d$. 
	
{\bf Organization of the paper.-} In Section \ref{sec_pre}, we discuss our main mathematical tool, Wasserstein distance. We also present our main assumptions on the sensitivity regions and main results on the existence of solutions to the SIEs and PDEs, and the propagation of chaos. As mentioned before, the deterministic case is already studied in \cite{CCHS}, thus we provide several examples of sensitivity sets $K$ satisfying our main assumptions {\bf (H1)}-{\bf (H2)} below without giving the details of proof. In Section \ref{sec:ext_par}, we present a global-in-time existence of solutions to corresponding stochastic integral inclusion system \eqref{eq:SIIE} to the SIEs \eqref{sys_sde_T}. In Section \ref{sec:ext_sde}, we show the existence and uniqueness of the PDE and its associated nonlinear SIEs. Finally, in Section \ref{sec:mf}, we provide the details of proof for the propagation of chaos for the systems \eqref{sys_sde_T}.
	%
	%
\section{Preliminaries and main results}\label{sec_pre}

\subsection{Wasserstein distance} In this part, we introduce the Wasserstein distance, which is our main mathematical tool to estimate the convergence of the empirical measure for the particle system to the probability measure in law. We also discuss an issue of making use of $1$-Wasserstein distance for our case, and we finally recall from \cite{FG} the estimate of convergence rate of an empirical measure in Wasserstein distance.

For $p \geq 1$ and $\mu, \nu \in \pp_p(\R^n)$, the Wasserstein distance is defined by
\[
\mathcal{W}^p_p(\mu, \nu) := \inf_{\xi \in \Gamma(\mu,\nu)} \int_{\R^n \times \R^n} |x-y|^p \xi(dx,dy)=\inf_{(X,Y)\sim (\mu,\nu)}\E\lt[|X-Y|^p\rt],
\]
where $\Gamma(\mu,\nu)$ is the set of all probability measures on $\R^n \times \R^n$ with first and second marginals $\mu$ and $\nu$, respectively, i.e.,
\[
\int_{\R^n \times \R^n} \phi(x) d\xi(dx,dy) = \int_{\R^n} \phi(x) \mu(dx) \quad \mbox{and} \quad \int_{\R^n \times \R^n} \phi(y) d\xi(dx,dy)= \int_{\R^n} \phi(y) \nu(dy),
\]	
for all $\phi \in \mc_b(\R^n)$, and $(X,Y)$ are all possible couples of random variables with $\mu$ and $\nu$ as respective laws. Note that when $p=1$, the $1$-Wasserstein distance is equivalent to the bounded Lipschitz distance:
\[
\mathcal{W}_1(\mu,\nu) = \sup\lt\{\lt|\int_{\R^n} \varphi(x) \mu(dx) - \int_{\R^n} \varphi(x) \nu(dx) \rt| : \varphi \in \mbox{Lip}(\R^n), \, \mbox{Lip}(\varphi) \leq 1 \rt\},
\]
where Lip($\R^n$) and Lip($\varphi$) represent the set of Lipschitz functions on $\R^n$ and the Lipschitz constant of a function $\varphi$, respectively.

In the current work, it seems convenient to consider $p$-Wasserstein distance with $p \in 2\mathbb{N}$ due to the multiplicative noises. However it has already been pointed out in \cite[Remark 3.1]{CCHS} by the authors and their collaborators that our strategy does not work in Wasserstein distance of order $p$ with $p \in (1,\infty)$, thus the make use of $\mathcal{W}_1$ or $\mathcal{W}_\infty$ is essential in the framework because of the form of force fields, see \cite{CS16}. For those reasons, we introduce a modified Wasserstein 1 distance $\mathcal{W}_1^\gamma$ as
	\[
	\mathcal{W}^{\gamma}_1(\mu,\nu):=\inf_{(X,Y)\sim (\mu,\nu)}\mathbb{E}\lt[\sqrt{\gamma^2+|X-Y|^2}\rt].
	\]
	Note that $\mathcal{W}^\gamma_1$ is not a metric. Employing that quantity enables us to establish stability like estimates for any $\gamma>0$ for the both diffusion and the singularity of the interaction kernel. Finally, by letting $\gamma \to 0$, we provide the results in the desired $1$-Wasserstein metric.
	
Before closing this subsection, we recall from \cite[Theorem 1]{FG} the result on the rate of convergence of the empirical measure in Wasserstein distance, which will be crucially used to obtain our result on propagation of chaos.

\begin{proposition}\label{prop_fg} Let $\mu \in \pp(\R^n)$ and $p \geq 1$. Suppose that $M_q(\mu):= \int_{\R^n} |x|^q \mu(dx) < \infty$ for some $q > p$. Then we have
\[
\mathbb{E}\lt[\mathcal{W}_p^p(\mu^N,\mu)\rt] \leq CM^{p/q}_q(\mu) \left\{ \begin{array}{ll}
N^{-1/2} + N^{-(q-p)/q} & \textrm{if $2p > n$ and $q \neq 2p$}, \\[2mm]
N^{-1/2}\log(1+N) + N^{-(q-p)/q} & \textrm{if $2p = n$ and $q \neq 2p$},\\[2mm]
N^{-p/n} + N^{-(q-p)/q} & \textrm{if $2p < n$ and $q \neq n/(n-p)$},
\end{array} \right.
\]
where $\mu^N = \frac1N\sum_{k=1}^N\delta_{X^k}$ and $C > 0$ depends only on $p,d$ and $q$.

\end{proposition}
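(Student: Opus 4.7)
The plan is to adopt the dyadic partition and coupling strategy that underlies the classical estimates for $\mathcal{W}_p(\mu^N,\mu)$. Since $\mathcal{W}_p^p$ is upper bounded by the expected cost of any coupling between $\mu^N$ and $\mu$, it suffices to construct an explicit coupling whose expected transport cost can be controlled by a nested partition argument.

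The first step is truncation. I would decompose $\mu = \mu|_{B(0,R_N)} + \mu|_{B(0,R_N)^c}$ for a radius $R_N$ to be chosen at the end, and similarly for $\mu^N$. Markov's inequality applied to the moment assumption gives $\mu(B(0,R_N)^c) \leq R_N^{-q} M_q(\mu)$, and via H\"older the $\mathcal{W}_p^p$-contribution of the tails is $\lesssim R_N^{p-q} M_q(\mu)$. The same bound on expectation holds for the empirical tails by linearity.

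The second step is the core dyadic coupling. Inside $B(0,R_N)$ I would construct a nested family of dyadic cubes with side length $\sim R_N 2^{-k}$ at level $k=0,\ldots,k_N$, and build a transport plan by matching masses greedily at each scale. A standard computation shows that such a layered plan carries transport cost at most $C \sum_k (R_N 2^{-k})^p \sum_{Q \textrm{ at level } k} |\mu^N(Q) - \mu(Q)|$. For each cube, $N\mu^N(Q)$ is $\mathrm{Binomial}(N,\mu(Q))$, so $\mathbb{E}|\mu^N(Q) - \mu(Q)| \leq \sqrt{\mu(Q)/N}$, and Cauchy--Schwarz across the $\sim 2^{kn}$ cubes at level $k$ yields $\mathbb{E}\sum_Q|\mu^N(Q)-\mu(Q)| \lesssim 2^{kn/2}/\sqrt{N}$. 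Summing in $k$ produces the geometric series $\sum_{k=0}^{k_N} 2^{k(n/2-p)}$ multiplied by $R_N^p N^{-1/2}$.

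The three cases in the statement arise from the behavior of this geometric sum: bounded when $2p>n$, logarithmic in $k_N$ when $2p=n$, and exponentially growing in $k_N$ when $2p<n$. In each regime one then optimizes $R_N$ (and $k_N$ in the third regime) to balance the body term against the tail $R_N^{p-q} M_q(\mu)$, recovering the three stated rates together with the factor $M_q(\mu)^{p/q}$. The main obstacle is the careful bookkeeping in this case analysis: one must track the exponents of $R_N$, $2^{k_N}$, and $N$ simultaneously to extract the tail rate $N^{-(q-p)/q}$ alongside the body rate, and the borderline values $q=2p$ and $q=n/(n-p)$ require slightly more care, which is why they are excluded from the three branches.
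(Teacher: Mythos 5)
First, a point of comparison: the paper does not prove this proposition at all. It is recalled verbatim as \cite[Theorem 1]{FG} (Fournier--Guillin) and used as a black box in the proof of Theorem \ref{thm:PC_T}, so there is no in-paper argument to measure your proposal against; the relevant benchmark is the proof in \cite{FG} itself. Your sketch reproduces the correct architecture of that proof for the \emph{compactly supported} part: nested dyadic partitions, the layered transport bound $\mathcal{W}_p^p(\mu^N,\mu)\lesssim \sum_k 2^{-kp}\sum_{Q}|\mu^N(Q)-\mu(Q)|$, the binomial bound $\mathbb{E}|\mu^N(Q)-\mu(Q)|\le\sqrt{\mu(Q)/N}$, Cauchy--Schwarz over the $\sim 2^{kn}$ cubes per level, and the three regimes of the geometric series $\sum_k 2^{k(n/2-p)}$ (including the optimization of the depth $k_N$ when $2p<n$, which produces $N^{-p/n}$). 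Up to bookkeeping, that part is sound.

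The genuine gap is in the treatment of the tail, and it is not mere bookkeeping: a single truncation radius $R_N$ cannot deliver the stated rate. In the regime $2p>n$, your body term is $\sim R_N^p N^{-1/2}$ and your tail term is $\sim R_N^{p-q}M_q(\mu)$; balancing them gives $R_N^q = M_q(\mu) N^{1/2}$ and hence the bound $M_q(\mu)^{p/q}N^{-(q-p)/(2q)}$, which is the \emph{geometric mean} of the two claimed contributions and is strictly larger than $N^{-1/2}+N^{-(q-p)/q}$ for every $q\in(p,\infty)$ (the exponents satisfy $(q-p)/(2q)<\min\{1/2,(q-p)/q\}$). So the optimization you describe proves a weaker statement than the proposition. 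The fix, which is what \cite{FG} actually does, is to decompose $\mathbb{R}^n$ into dyadic annuli $B_{2^{j+1}}\setminus B_{2^j}$, apply the unit-scale estimate on each rescaled annulus, and crucially use the two-sided bound $\mathbb{E}|\mu^N(Q)-\mu(Q)|\le\min\bigl(2\mu(Q),\sqrt{\mu(Q)/N}\bigr)$: near annuli (where $\mu(\text{annulus})\gtrsim 1/N$) contribute the $N^{-1/2}$ (or $N^{-p/n}$) term via the square-root branch, while far annuli (where $2^{-jq}M_q(\mu)\lesssim 1/N$) contribute $\sum_j 2^{jp}\cdot 2^{-jq}M_q(\mu)\lesssim M_q(\mu)^{p/q}N^{-(q-p)/q}$ via the linear branch. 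This also explains the excluded values correctly: $q=2p$ and $q=n/(n-p)$ are precisely where the two exponents in the respective branch coincide and a logarithmic correction appears in the sum over annuli; they are not artifacts of the in-space dyadic series, whose borderline case $2p=n$ is already listed separately.
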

	
\subsection{Sharp sensitivity regions} In this subsection, we introduce several notations for the set $K$ and its properties. We also discuss our main assumptions for $K$. 
\begin{definition}Let $K \subset \R^d$ be a non-empty compact set and $\e >0$. We define the $\e$-boundary of $K$ by:
\[
\partial^{\e}K :=\left\{x+y \ | \ x\in \partial K, |y|\leq \e \right\},
\] 
and also the $\e$-enlargement(resp. $\e$-reduction) $K^{\e,+}$ (resp. $K^{\e,-}$) by
\[
K^{\e,+}:=K \cup \partial^{\e}K \quad \mbox{and} \quad K^{\e,-}:=K\setminus \partial^{\e}K
\]
Note that $\pa^\e K = K^{\e,+} \setminus K^{\e,-}$ and $(\pa^\e K)^{\delta,+} \subset \pa^{\e + \delta}K$ for $\e > 0$ and $\delta > 0$.
\end{definition}
We next provide the so called {\it rope argument} used in \cite{Hauray, HS} for the propagation of chaos of Vlasov-Poission or Vlasov-Poisson-Fokker-Planck systems in one dimension whose proof can be found in \cite[Lemma 2.2]{CCHS}.
\begin{lemma}\label{lem_est2}For $K\subset \mathbb{R^d}$ For $x_1,y_1,x_2,y_2 \in \R^d$, we have
\[
|\mb_K(y_1 - x_1) - \mb_K(y_2 - x_2)| \leq \mb_{\pa^{2|x_1 - x_2|}K}(y_1 - x_1) + \mb_{\pa^{2|y_1 - y_2|}K}(y_1 - x_1).
\]
\end{lemma}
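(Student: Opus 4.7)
The statement reduces to a case analysis governed by where the point $y_1-x_1$ sits relative to $\partial K$. My plan is to argue first that the left-hand side equals either $0$ or $1$, and only the latter case requires work: it occurs precisely when exactly one of the two shifted points $y_1-x_1$ and $y_2-x_2$ belongs to $K$. Assume without loss of generality that $y_1-x_1\in K$ and $y_2-x_2\notin K$ (the other orientation is symmetric). Then the line segment joining $y_1-x_1$ to $y_2-x_2$ meets $\partial K$, so there exists some $z\in\partial K$ on this segment, and in particular
\[
\mathrm{dist}(y_1-x_1,\partial K)\leq |(y_1-x_1)-z|\leq |(y_1-x_1)-(y_2-x_2)|.
\]
By the triangle inequality, $|(y_1-x_1)-(y_2-x_2)|\leq |x_1-x_2|+|y_1-y_2|\leq 2\max\{|x_1-x_2|,|y_1-y_2|\}$, so $y_1-x_1$ lies within distance $2|x_1-x_2|$ or within distance $2|y_1-y_2|$ of $\partial K$, depending on which of the two is larger.

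Combining these, $y_1-x_1\in\partial^{2|x_1-x_2|}K\cup\partial^{2|y_1-y_2|}K$, and therefore
\[
\mathbf{1}_{\partial^{2|x_1-x_2|}K}(y_1-x_1)+\mathbf{1}_{\partial^{2|y_1-y_2|}K}(y_1-x_1)\geq 1,
\]
which matches the left-hand side in the only nontrivial case. When $\mathbf{1}_K(y_1-x_1)=\mathbf{1}_K(y_2-x_2)$ the left-hand side is $0$ and the inequality is trivial. There is no real obstacle here; the only thing to be mindful of is the convention used for $\mathbf{1}_K$ at boundary points (closed vs.\ open $K$), but the argument is insensitive to this choice because any point on the segment that switches membership can be taken arbitrarily close to $\partial K$. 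The proof is therefore essentially a one-line application of the triangle inequality together with a connectedness argument for the segment joining the two shifted points.
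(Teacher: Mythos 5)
Your argument is correct and is exactly the ``rope argument'' that the paper invokes (it does not reproduce a proof but defers to \cite[Lemma 2.2]{CCHS}): the left-hand side is nonzero only when the segment joining $y_1-x_1$ and $y_2-x_2$ crosses $\partial K$, which places $y_1-x_1$ within distance $|x_1-x_2|+|y_1-y_2|\leq 2\max\{|x_1-x_2|,|y_1-y_2|\}$ of $\partial K$ and hence in $\partial^{2|x_1-x_2|}K\cup\partial^{2|y_1-y_2|}K$. Your handling of the open/closed convention is also sound, since the first parameter $t$ at which the segment leaves $K$ always yields a point of $\mathrm{cl}(K)\cap\mathrm{cl}(K^c)=\partial K$.
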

We now give our main assumption on the compact set $K(v)$:
	\begin{itemize}
		\item[${\bf (H1)}$] $K(\cdot)$ is globally compact, i.e., $K(v)$ is compact and there exists a compact set $\mathcal{K}$ such that $K(v)\subseteq \mathcal{K}\,, \forall \,v\in \mathbb{R}^d$.
		\item[${\bf (H2)}$]  There exist a family of  closed sets $v \mapsto \Theta(v)$  and a constant $C$ such that:
		\begin{itemize}
			\item[(i)]  $\partial K(v) \subset \Theta(v) $, for all $v \in \R^d$,
			\item[(ii)] $|\Theta(v)^{\e,+}| \le C \e$, for all $\e \in (0,1)$,
			\item[(iii)] $K(v) \Delta K(w) \subset \Theta(v)^{C |v-w|,+}$ for $v,w\in\R^d$,
			\item[(iv)] $\Theta(w) \subset \Theta(v)^{C |v-w|,+}$ for $v,w\in\R^d$,
		\end{itemize}
		where $\Delta$ denotes the symmetric difference, i.e., $A \Delta B = (A\setminus B) \cup (B \setminus A) = (A \cup B)\setminus (A \cap B)$ for $A, B \subset \R^d$.
	\end{itemize}
Before giving some comments on the set-valued function $\Theta(v)$ given in ${\bf (H2)}$, we introduce a generalized boundary of the set $\tilde{\pa}K(v)$ in the definition below.
\begin{definition}\label{def_gb} For $v\in \R^d$, we define the generalized boundary set of $K(v)$, $\widetilde{\pa}K(v)$ by
\[
\widetilde{\pa}K(v) := \lt\{ x \in \R^d : \mathcal{A}(x,v) = [0,1] \rt\},
\]
where $\mathcal{A}: \R^d \times \R^d \to [0,1]$ is given by
\[
\mathcal{A}(x,v) := Conv\lt\{ \alpha \in [0,1] : \exists\, (x^n,v^n) \to (x,v) \mbox{ such that } \mb_{K(v^n)}(x^n) \to \alpha\rt\} \quad \mbox{for} \quad (x,v)\in \R^{2d}.
\]
\end{definition}
Introducing the above boundary set $\widetilde{\pa}K(v)$ is required to give a sense to the time-derivative of the particle trajectories when they cross the boundary of $K(v)$. The set-valued function $\Theta(v)$ is a kind of a regularization of the set valued function $\widetilde{\pa}K(v)$. Note that there are inclusion relations for the sets $\pa K$, $\widetilde\pa K$, and $\Theta$:
\[
\pa K(v)\subset \widetilde{\pa}K(v)\subset \Theta(v) \quad \mbox{for} \quad v \in \R^d.
\]
We refer to \cite{CCHS} for details of its proof. We next provide several examples of sets satisfying the above conditions ${\bf (H1)}$-${\bf (H2)}$ that are studied in \cite[Section 5]{CCHS}. \newline
	
	{\bf (Example 1)} A fixed closed ball in $\R^d$:
	\[
	K(v) = \mbox{cl}(\mathbb{B}_r^d) \quad \mbox{with} \quad r > 0. 
	\]
	
	{\bf (Example 2)} A closed ball with radius evolving regularly with respect to velocity in $\R^d$:
	\[
	K(v) = \mbox{cl}(\mathbb{B}_{r(|v|)}^d) \quad \mbox{with a bounded Lipschitz function } r: \R_+ \to \R_+.
	\]
	
	{\bf (Example 3)} A vision cone in $\R^d$ with $d=2,3$:
	\[
	K(v) = \lt\{ x\,:\, |x| \leq r \quad \mbox{and} \quad -\theta(|v|) \leq \cos^{-1}\lt(\frac{x \cdot v}{|x||v|} \rt) \leq \theta(|v|) \rt\}
	\]
	with $0 < \theta(z) \in \mc^\infty(\R_+)$ satisfying $\theta(z) = \pi$ for $0 \leq z \leq 1$, $\theta(z)$ is decreasing for $z \geq 1$ and $\theta(z) \to \theta_* > 0$ as $|z| \to +\infty$.
	
	For the first two examples, we can choose the generalized boundary set $\Theta(v)$ as $\Theta(v) = \pa K(v)$. For the third one, if we define the family set $\Theta(v)$ as
	\bq\label{def_theta}
	\Theta(v):=\begin{cases}
		\partial C(r,v,\theta(|v|))\cup R(v)& \text{ if } |v|\in(1/2,1), \\ 
		\partial C(r,v,\theta(|v|)) & \text{ else}, 
	\end{cases} 
	\eq   
	where $R(v)=[a(v),b(v)]$ with
	$$
	a(v)=-r\frac{v}{|v|} \quad , \quad b(v)= 2r(|v|-1)\frac{v}{|v|},
	$$
	then the set $\Theta(v)$ defined in \eqref{def_theta} satisfies the condition ${\bf (H2)}$.

\subsection{Main results} In this part, we present our main results of this paper. First, we establish the global-in-time existence of solutions to the stochastic particles system. 
\begin{theorem}\label{thm_SIE} There exists some stochastic basis $(\overline{\om},\overline{\mathcal{F}},(\overline{\mathcal{F}_t})_{t\geq 0},\overline{\mathbb{P}})$, and on this basis a $dN$-dimensional Brownian motion $(B_t^1,\cdots,B_t^N)_{t\geq 0}$ of $2dN$ dimensional $(X_0^1,V_0^1,\cdots,X_0^N,V_0^N)$ random variables with law $f^N_0\in \mathcal{P}_2(\R^{2dN})$ and some $\mathcal{F}_t$-adapted $2dN$ dimensional $(X_t^1,V_t^1,\cdots,X_t^N,V_t^N)_{t\geq0}$ process solution to the following inclusion integral equation
\begin{equation}
\label{eq:SIIE}
\left\{ \begin{array}{ll}
\displaystyle X_t^i = X_0^i+\int_0^tV_s^i\,ds, \quad i=1,\cdots, N, \quad t > 0,&\\[3mm] 
\displaystyle V_s^i= V_0^i+ \frac{1}{N}\sum_{j=1}^N\int_0^t \alpha^{i,j}_{s}
(V_s^j-V_s^i) \,ds+\sqrt{2\sigma}\int_0^t\mathcal{R}(V_s^i)\, dB_s^i, &\\[3mm]
\displaystyle \alpha_{s}^{i,j}\in  \mathcal{I}(X_s^j-X_s^i,V_s^i), \quad \forall \,s\geq 0, &\\[2mm]
\end{array} \right.
\end{equation}
where $\mathcal{I}$ is the set valued function defined as
\[
\mathcal{I}(x,v)=\begin{cases}
\{1\} & \text{ if } x\in \mbox{int}(K(v)) \setminus \widetilde{\pa}K(v),\\[2mm] 
\{0\} & \text{ if } x\in K(v)^c \setminus \widetilde{\pa}K(v), \\[2mm]  
[0,1] & \text{ if } x\in \widetilde{\pa}K(v).
\end{cases}
\]
\end{theorem}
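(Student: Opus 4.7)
The plan is to proceed by regularization, compactness, and a limiting procedure in the spirit of classical weak-existence results for SDEs with non-regular drift.

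First, I would replace the discontinuous weight $\mb_{K(v)}(y-x)$ by a smooth approximation: for each $\e > 0$, choose $\chi_\e(\cdot,v) \in \mc^1(\R^d;[0,1])$ equal to $1$ on $K(v)^{\e,-}$, vanishing outside $K(v)^{\e,+}$, and depending Lipschitz-continuously on $v$ (here {\bf (H2)(iv)} controls how the boundary sets move with velocity). The associated force field
\[
F_\e[\mu_s^N](x,v) := \frac{1}{N}\sum_{j=1}^N \chi_\e(X_s^j - x,v)(V_s^j - v)
\]
is globally Lipschitz in $(x,v)$; together with the $\mc^2$ truncation $\mathcal{R}$, standard strong existence theory on the original probability space yields solutions $(X_t^{i,\e},V_t^{i,\e})$ to the regularized SDEs.

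Second, I would extract uniform estimates. The support of $\mathcal{R}$ inside $\bv$, combined with an It\^o computation for $|V_s^{i,\e}|^2$ and the dissipative structure of the Cucker--Smale drift (to be formalized in Lemma \ref{vel_bou_trun}), gives an almost sure bound $|V_s^{i,\e}|\leq V_m$. Consequently the drifts are bounded uniformly in $\e$ (since $\chi_\e \in [0,1]$ and the speeds are bounded) and the martingale parts are controlled via Burkholder--Davis--Gundy, yielding uniform H\"older estimates on $(X^{i,\e},V^{i,\e})$ and tightness of their laws in $\mc([0,T];\R^{2dN})$. Simultaneously I would regard the weight processes $\alpha^{i,j,\e}_s := \chi_\e(X_s^{j,\e}-X_s^{i,\e},V_s^{i,\e})$ as elements of $L^\infty(0,T;[0,1])$ equipped with the weak-$*$ topology, which is compact.

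Third, via a Skorokhod representation I pass to a new stochastic basis $(\overline\om,\overline{\mathcal{F}},(\overline{\mathcal{F}_t}),\overline{\IP})$ on which, along a subsequence, the regularized trajectories converge a.s.\ uniformly on $[0,T]$, the Brownian motions converge to limits $(B^1,\dots,B^N)$, and the $\alpha^{i,j,\e}$ converge in the weak-$*$ sense to $\alpha^{i,j}\in[0,1]$. Passage to the limit in the drift $\int_0^t \alpha^{i,j,\e}_s(V_s^{j,\e}-V_s^{i,\e})\,ds$ is then immediate, while the stochastic integral is recovered through the classical martingale problem formulation. It remains to show $\alpha^{i,j}_s \in \mathcal{I}(X_s^j-X_s^i,V_s^i)$: off the generalized boundary this follows from a.e.\ pointwise convergence $\chi_\e \to \mb_{K(v)}$, and on $\widetilde{\pa}K(v)$ the definition of $\mathcal{A}$ as the convex hull of all subsequential limits of $\mb_{K(v^n)}(x^n)$ ensures any weak-$*$ accumulation value lies in $[0,1]$.

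The main obstacle I anticipate is the last identification step: the random set of times $s$ at which $X_s^j-X_s^i \in \widetilde{\pa}K(V_s^i)$ need not have zero Lebesgue measure a priori, so I cannot simply discard it. What rescues the argument is that on this set the inclusion $\mathcal{I}$ allows \emph{any} value in $[0,1]$; combined with {\bf (H2)(i)--(ii)}, which ensures $\widetilde{\pa}K(v)\subset \Theta(v)$ has measure $O(\e)$ in its $\e$-enlargement, one can control the contribution of boundary crossings in the limit and conclude that the limiting $\alpha^{i,j}$ obeys the set-valued relation. Adaptedness of $\alpha^{i,j}$ to the new filtration generated by the limiting processes follows from the measurability of weak-$*$ limits together with the predictable structure preserved under Skorokhod representation.
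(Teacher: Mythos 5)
Your proposal follows essentially the same route as the paper's proof: regularize the indicator (the paper mollifies in both $x$ and $v$ rather than using a cutoff $\chi_\e$, but to the same effect), prove the a.s.\ velocity bound $|V^{i}_t|\le V_m$ as in Lemma \ref{vel_bou_trun}, obtain tightness via Burkholder--Davis--Gundy and Kolmogorov/Arzel\`a--Ascoli, pass to a Skorokhod representation, and identify the limit by splitting the drift integral according to whether $X^j_s-X^i_s$ lies in $\widetilde{\pa}K(V^i_s)$, using the pointwise convergence of the regularized weights off the generalized boundary and the fact that the inclusion accepts any $[0,1]$-valued weight on it. This matches the paper's Steps A and B, including your correct identification of the boundary-crossing times as the crux and of the set-valued formulation as the resolution.
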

For notational simplicity, we define the set valued function $\tilde{F}[\mu]$ as
\[
\tilde{F}[\mu](x,v):=\int_{\R^d \times \R^d}\mathcal{I}(y-x,v)(w-v)\mu(dydw),
\]
which makes sense at least when $\mu$ is an atomic measure. 

We next state the theorem on the existence of solutions to the nonlinear SIEs \eqref{sys_NLS_T} and its associated PDEs \eqref{sys_kin_Trun}.
	\begin{theorem}\label{thm_SDE} Let $f_0$ be a probability measure on $\R^d \times \R^d$ satisfying $f_0 \in (L^1 \cap L^\infty)(\R^d \times \R^d) \cap \pp_1(\R^d \times \R^d)$ and let $(X_0^i, V_0^i)_{i=1,\cdots, N}$ be $N$ independent variables with law $f_0$. Suppose the initial data $f_0$ is compactly supported in velocity in $\bv$. Then, for some $T > 0$, there exists a unique strong solution $(Y_t^i, W_t^i)_{i=1,\cdots, N}$ to the nonlinear SIEs \eqref{sys_NLS_T} and a unique $f \in L^\infty(0,T; (L^1 \cap L^\infty)(\R^d \times \R^d)) \cap \mc([0,T];\pp_1(\R^d \times \R^d))$ weak solution to \eqref{sys_kin_Trun} which is the law of the process solution to \eqref{sys_NLS_T} and compactly supported in velocity in $\mathbb{B}_{V_m}$ up to time $T >0$. Moreover, if $\tilde{f}\in L^{\infty}(0,T;\mathcal{P}_1(\R^d \times \R^d))$ is another solution starting from $\tilde{f}_0\in \mathcal{P}_1(\R^d \times \R^d)$ then
	\begin{equation*}
	\mathcal{W}_1(f_t,\tilde{f}_t)\leq \mathcal{W}_1(f_0,\tilde{f}_0)e^{\int_0^t \|f_s\|_{L^1\cap L^{\infty}}\, ds}.
	\end{equation*}
	\end{theorem}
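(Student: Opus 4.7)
The plan is a Picard iteration on curves of probability measures, built around the principle that $L^\infty$ control of the density turns the discontinuous force field $F[\mu]$ into a Lipschitz drift, so that the standard McKean--Vlasov machinery becomes applicable. Given $\mu\in L^\infty(0,T;(L^1\cap L^\infty)(\R^{2d}))\cap\mathcal{C}([0,T];\mathcal{P}_1)$ compactly supported in velocity in $\mathbb{B}_{V_m}^d$, I would first solve the frozen linear SDE $dY_t=W_t\,dt$, $dW_t=F[\mu_t](Y_t,W_t)\,dt+\sqrt{2\sigma}\,\mathcal{R}(W_t)\,dB_t$ with $(Y_0,W_0)\sim f_0$. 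The crucial regularity, obtained by applying Lemma~\ref{lem_est2} and conditions {\bf (H2)}(ii)--(iv) to the indicator $\mathbf{1}_{K(v)}$, is
\begin{equation*}
|F[\mu_t](x_1,v_1)-F[\mu_t](x_2,v_2)|\leq C\,\|\mu_t\|_{L^\infty}\bigl(|x_1-x_2|+|v_1-v_2|\bigr),
\end{equation*}
which together with the $\mathcal{C}^2$ compactly supported truncation $\mathcal{R}$ yields classical strong existence and uniqueness. The Picard map is then $\Psi(\mu)_t:=\mathcal{L}(Y_t,W_t)$.

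To show $\Psi$ maps an appropriate closed ball back into itself, I verify two invariances. The velocity support is treated as in Lemma~\ref{vel_bou_trun}: since $\mathcal{R}$ vanishes outside $\mathbb{B}_{V_m}^d$ and $F[\mu_t]$ points inward when $|W|$ approaches $V_m$ (because $\mu_t$ is itself supported there), one has $|W_t|\leq V_m$ almost surely. For the $L^\infty$ bound, It\^o's formula identifies $f_t:=\Psi(\mu)_t$ as a weak solution of the linear Fokker--Planck equation with drift $(v,F[\mu_t])$ and diffusion matrix $\sigma\mathcal{R}^2(v)\,\mathrm{Id}$; writing the equation in non-divergence form and using $\|\nabla_v\cdot F[\mu_t]\|_{L^\infty}\leq C\,\|\mu_t\|_{L^\infty}$ (once more via {\bf (H2)}), a duality argument gives $\|f_t\|_{L^\infty}\leq \|f_0\|_{L^\infty}\exp(C\int_0^t\|\mu_s\|_{L^\infty}\,ds)$. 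Choosing $T$ small enough, $\Psi$ stabilizes the set of $\mu$ with $\|\mu_t\|_{L^\infty}\leq 2\|f_0\|_{L^\infty}$ and velocity support in $\mathbb{B}_{V_m}^d$.

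The heart of the argument, and simultaneously the content of the stated stability inequality, is a coupling estimate. For two solutions $f,\tilde f$ I would couple the associated processes through the \emph{same} Brownian motion and an optimal $\mathcal{W}_1$-coupling at time $0$, and apply It\^o's formula to $\varphi_\gamma(x,v):=\sqrt{\gamma^2+|x|^2+|v|^2}$ evaluated at $(Y_t-\tilde Y_t,W_t-\tilde W_t)$. Splitting
\begin{equation*}
F[f_t](Y,W)-F[\tilde f_t](\tilde Y,\tilde W)=\bigl(F[f_t](Y,W)-F[f_t](\tilde Y,\tilde W)\bigr)+\bigl(F[f_t]-F[\tilde f_t]\bigr)(\tilde Y,\tilde W),
\end{equation*}
the first summand is controlled by the Lipschitz estimate of paragraph one, the second by $C\,\|f_t\|_{L^1\cap L^\infty}\,\mathcal{W}_1^\gamma(f_t,\tilde f_t)$ after another use of Lemma~\ref{lem_est2}, and the It\^o second-order correction is $\lesssim|W_t-\tilde W_t|^2/\varphi_\gamma\leq \varphi_\gamma$ since $\mathcal{R}\in\mathcal{C}^2$. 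Taking expectation, using optimality, invoking Gr\"onwall and letting $\gamma\downarrow 0$ produce the stability estimate in the $\mathcal{W}_1$ metric; applied to consecutive Picard iterates it shows they are Cauchy in $\mathcal{C}([0,T];\mathcal{P}_1)$, completing the existence argument, and it also delivers uniqueness. The main obstacle, in my view, is the Lipschitz-in-$v$ estimate for $F[\mu]$: shifting $v$ translates the vision region $K(v)$ itself, and only the pair {\bf (H2)}(iii)--(iv) combined with the $L^\infty$ density bound lets one absorb this translation into a quantity linear in $|v-w|$; this is also what forces the modified distance $\mathcal{W}_1^\gamma$ on us, as neither $\mathcal{W}_1$ nor higher-order Wasserstein distances tolerate simultaneously the indicator kernel and the It\^o calculus.
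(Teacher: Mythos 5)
Your architecture (fixed point on measure flows, velocity-support invariance, $L^\infty$ propagation, coupling with the regularized distance $\sqrt{\gamma^2+|\cdot|^2}$, It\^o plus Gronwall) is close to the paper's, which instead regularizes the indicator by $\mb^{\eta,\e}_{K(v)}$ and runs a Cauchy argument in $(\eta,\e)$ rather than a Picard iteration; your observation that $F[\mu]$ is genuinely pointwise Lipschitz once $\mu\in L^\infty$ with compact velocity support is correct and makes the frozen linear SDE step legitimate. However, there is a concrete gap in the step that carries the stated stability inequality. You split
\[
F[f_t](Y,W)-F[\tilde f_t](\tilde Y,\tilde W)=\bigl(F[f_t](Y,W)-F[f_t](\tilde Y,\tilde W)\bigr)+\bigl(F[f_t]-F[\tilde f_t]\bigr)(\tilde Y,\tilde W).
\]
The first summand is fine ($f_t\in L^\infty$). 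But to bound the second by $C\|f_t\|_{L^1\cap L^\infty}\,\mathcal{W}_1^\gamma(f_t,\tilde f_t)$ you must, after introducing a coupling $\pi$ of $f_t$ and $\tilde f_t$ and applying Lemma \ref{lem_est2}, control a term of the form
\[
\E\lt[\int \mb_{\pa^{2|y-y'|}K(\tilde W)}(y-\tilde Y)\,\pi(dy,dw,dy',dw')\rt],
\]
and the only way to convert the small measure of $\pa^{2|y-y'|}K(\cdot)$ into a factor $|y-y'|$ is Fubini against the law of the evaluation point $(\tilde Y,\tilde W)$ — which here is $\tilde f_t$, assumed only in $\mathcal{P}_1$, not in $L^\infty$. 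Flipping the decomposition (evaluating the measure-difference term at $(Y,W)$) just moves the problem: then the Lipschitz piece becomes $F[\tilde f_t](Y,W)-F[\tilde f_t](\tilde Y,\tilde W)$ and needs $\tilde f_t\in L^\infty$. This is exactly why the paper's Lemma \ref{lem:roparg} is organized as a genuinely \emph{weak--strong} estimate with a three-term decomposition ($I_1,I_2,I_3$) in which every surviving indicator is transported, via $\mb_A(x)\leq\mb_{A^{|x-y|,+}}(y)$ and {\bf (H2)}(ii)--(iii), onto a set of the form $\Theta(W)^{u,+}$ evaluated at $y-Y$ and then integrated against the single $L^\infty$ measure $f$. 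Your splitting works for the Picard contraction itself (both iterates are in $L^\infty$ by the invariance), but it does not yield the stability bound for general $\tilde f\in L^\infty(0,T;\mathcal{P}_1)$, which is the part of the theorem that gives uniqueness in the stated class.

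Two smaller points. First, your $L^\infty$ propagation invokes $\|\nabla_v\cdot F[\mu_t]\|_{L^\infty}\leq C\|\mu_t\|_{L^\infty}$ directly on the unregularized force; this is salvageable (a Lipschitz field has bounded a.e.\ divergence), but justifying the $L^p$/duality computation for a weak solution of the degenerate Fokker--Planck equation requires an approximation argument anyway — the paper performs the $L^p$ estimate at the $(\eta,\e)$-regularized level in Proposition \ref{prop_ext} precisely to avoid this. Second, the stability inequality in the theorem is a statement about PDE solutions $\tilde f$, so even with a correct process-level estimate you still need to represent an arbitrary weak solution of \eqref{sys_kin_Trun} as the marginal flow of a process solving \eqref{SDE:Tr} with frozen coefficients (the superposition step the paper takes from \cite{CS16}); your proposal does not address this passage from the PDE to the SDE.
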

	
	\begin{remark}\label{rmk:mom}A straightforward computation yields that for $q \geq 1$
		\[
		\frac{d}{dt}\int_{\R^d \times \R^d} |x|^q f\,dxdv \leq q\int_{\R^d \times \R^d} |x|^{q-1}|v|f\,dxdv \leq C\int_{\R^d \times \R^d} |x|^{q-1} f\,dxdv \leq C\int_{\R^d \times \R^d} |x|^q f\,dxdv + C.
		\]
		Thus the $q$-th moment of $f$ is estimated as
		\[
		\int_{\R^d \times \R^d} |x|^q f_t\,dxdv \leq C\int_{\R^d \times \R^d} |x|^q f_0\,dxdv + C.
		\]
	\end{remark}

Our final result is on the propagation of chaos. For this, we recall the definition of a chaotic sequence and remark the reformulation of the notation of the propagation of chaos in terms of coupling. We refer to \cite{HM} for more details on that.
\begin{definition} Let $f$ be a probability on $\R^{2d}$. A sequence $\lt((X_i^N,V^N_i)_{i \le N} \rt)_{N \in \mathbb{N}}$ of exchangeable random variables is $f$-chaotic if 
\[
\mu^N := \frac1N \sum_{i=1}^N \delta_{(X^N_i,V^N_i)} \overset{\LL}{\longrightarrow} f \quad \mbox{as} \quad N \to \infty.
\]
\end{definition}
\begin{remark}
Assume that $f \in \mathcal{P}_p(\R^{2d})$ endowed with the $\mathcal{W}_p$ metric. Then a sufficient condition for the sequence $\lt((X_i^N,V^N_i)_{i \le N} \rt)_{N \in \mathbb{N}}$ to be $f$-chaotic is 
\[
\E \lt[ \mathcal{W}_p(\mu^N, f)\rt] \to 0 \quad \mbox{as} \quad N \to \infty.
\]
\end{remark}
\begin{theorem}\label{thm:PC_T}
Suppose that the set-valued function $K$ satisfies $\bf{(H1)}$ and $\bf{(H2)}$, and let $f$ be a solution to the system  \eqref{sys_kin_Trun}-\eqref{ini_sys_kin_Trun} up to time $T > 0$, such that $f \in L^\infty(0,T; (L^1 \cap L^\infty)(\R^d \times \R^d)) \cap \mc([0,T]; \pp_1(\R^d \times \R^d))$ with initial data $f_0 \in (L^1 \cap L^\infty)(\R^d \times \R^d) \cap \pp_1(\R^d \times \R^d)$ compactly supported in velocity in $\mathbb{B}_{V_m}$. Let $(X_0^i, V_0^i)_{i=1,\cdots, N}$ be $N$ independent variables with law $f_0$. Furthermore, we assume that $|x|^q f_0 \in L^1(\R^d \times \R^d)$ for $q > 1$. Then there exists a constant $C>0$ depending only on $V_{m}$, $f_0$, $q$ and $T$ such that 
\[
\mathbb{E}\lt[\mathcal{W}_1(\mu_t^N,f_t)\rt] \leq C \left\{ \begin{array}{ll}
N^{-1/2} + N^{-(q-1)/q} & \textrm{if $2 > d$ and $q \neq 2$}, \\[2mm]
N^{-1/2}\log(1+N) + N^{-(q-1)/q} & \textrm{if $2 = d$ and $q \neq 2$},\\[2mm]
N^{-1/d} + N^{-(q-1)/q} & \textrm{if $2 < d$ and $q \neq d/(d-1)$},
\end{array} \right.
\]
for all $t \in [0,T]$, where $\mu_t^N = \frac1N\sum_{i=1}^N \delta_{(X^i_t,V^i_t)}$ is the empirical measure associated to the particle system  \eqref{eq:SIIE}. 
\end{theorem}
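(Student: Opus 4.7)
The plan is to set up a synchronous coupling: introduce $N$ independent copies $(Y_t^i, W_t^i)_{i=1,\ldots,N}$ of the nonlinear McKean--Vlasov process \eqref{sys_NLS_T}, driven by the same Brownian motions and starting from the same initial data as the particle system \eqref{eq:SIIE}, and let $\hat{f}_t^N := \frac{1}{N}\sum_{i=1}^N \delta_{(Y_t^i, W_t^i)}$. The triangle inequality then gives
\[
\E\lt[\mathcal{W}_1(\mu_t^N, f_t)\rt] \leq \E\lt[\mathcal{W}_1(\mu_t^N, \hat{f}_t^N)\rt] + \E\lt[\mathcal{W}_1(\hat{f}_t^N, f_t)\rt],
\]
and since the $(Y_t^i, W_t^i)$ are i.i.d.\ with law $f_t$, Proposition \ref{prop_fg} applied with $p=1$ (using the moment bound from Remark \ref{rmk:mom} and the compact velocity support from Theorem \ref{thm_SDE}) yields exactly the three-case rate stated in the theorem for the second term.

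For the coupling error, the usual bound $\mathcal{W}_1(\mu_t^N, \hat{f}_t^N) \leq \frac{1}{N}\sum_i \sqrt{|X_t^i - Y_t^i|^2 + |V_t^i - W_t^i|^2}$ involves the non-smooth modulus, so as anticipated in Section \ref{sec_pre} I would instead work with the regularized quantity $\Phi_\gamma^i := \sqrt{\gamma^2 + |X_t^i - Y_t^i|^2 + |V_t^i - W_t^i|^2}$, derive a Gr\"onwall estimate on $\mathcal{W}_1^\gamma(\mu_t^N, \hat{f}_t^N)$, and then send $\gamma \to 0$. Applying It\^o's formula to $\Phi_\gamma^i$, the It\^o correction from the noise is controlled by $C \Phi_\gamma^i$ via the Lipschitz property of $\mathcal{R} \in \mc^2$, while the first-order contribution reduces to bounding the drift discrepancy
\[
\mathcal{D}_s^i := \frac{1}{N}\sum_{j=1}^N \alpha_s^{i,j}(V_s^j - V_s^i) - F[f_s](Y_s^i, W_s^i).
\]

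The bulk of the work lies in estimating $\E[|\mathcal{D}_s^i|]$. I would split it as the difference against the particle-level nonlinear force $\tilde{F}[\hat{f}_s^N](Y_s^i, W_s^i)$ plus the mean-field error $\tilde{F}[\hat{f}_s^N](Y_s^i, W_s^i) - F[f_s](Y_s^i, W_s^i)$. For the particle-level piece, combine $\alpha^{i,j} \in \mathcal{I}(X^j - X^i, V^i)$ with the rope argument of Lemma \ref{lem_est2} to dominate $|\alpha^{i,j} - \mb_{K(W^i)}(Y^j - Y^i)|$ by the sum of indicators of $\pa^{2|X^i - Y^i|}K(V^i)$, $\pa^{2|X^j - Y^j|}K(V^i)$, and $K(V^i) \Delta K(W^i)$. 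Using ${\bf (H2)}$(iii) to trap the last set in $\Theta(V^i)^{C|V^i - W^i|,+}$ and ${\bf (H2)}$(ii) together with the $L^\infty$-bound on $f_s$ from Theorem \ref{thm_SDE} and the velocity truncation $|W^j - W^i| \leq 2V_m$, each boundary indicator integrates to a linear expression in $|X^i - Y^i|$, $|X^j - Y^j|$, $|V^i - W^i|$, hence in $\Phi_\gamma^i + \Phi_\gamma^j$. The mean-field error is treated by an analogous rope-and-$\Theta$ analysis and ultimately dominated by $C \mathcal{W}_1(\hat{f}_s^N, f_s)$, to which Proposition \ref{prop_fg} applies once more.

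Averaging over $i$ and writing $R_\gamma^N(t) := \E[\frac{1}{N}\sum_i \Phi_\gamma^i(t)]$, assembling the estimates yields
\[
R_\gamma^N(t) \leq \gamma + C \int_0^t R_\gamma^N(s)\, ds + C \int_0^t \E\lt[\mathcal{W}_1(\hat{f}_s^N, f_s)\rt] ds,
\]
and Gr\"onwall's lemma followed by $\gamma \to 0$ delivers the stated three-case bound. The hard part will be the drift comparison in the discontinuous regime: the rope argument inevitably produces indicators of thin boundary sets, and converting these into \emph{linear} powers of $\Phi_\gamma$ (rather than higher powers, which would be incompatible with a Gr\"onwall closure) requires simultaneously the linear measure bound ${\bf (H2)}$(ii), the $L^\infty$-bound on $f$, and the compact velocity support. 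Together, these are precisely what forces the use of the $\mathcal{W}_1^\gamma$ framework rather than any $\mathcal{W}_p^\gamma$ with $p > 1$.
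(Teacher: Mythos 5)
Your overall architecture coincides with the paper's: synchronous coupling with i.i.d.\ copies of the nonlinear process, triangle inequality, Proposition \ref{prop_fg} for $\mathbb{E}[\mathcal{W}_1(\hat f^N_t,f_t)]$, the regularized modulus $\sqrt{\gamma^2+|\cdot|^2}$ with It\^o's formula, and a Gr\"onwall closure followed by $\gamma\to 0$. However, there is a genuine gap in your treatment of the sampling error, in two places. First, the claim that $|F[\hat f^N_s](Y^i_s,W^i_s)-F[f_s](Y^i_s,W^i_s)|$ is ``ultimately dominated by $C\,\mathcal{W}_1(\hat f^N_s,f_s)$'' is not available: the test function $(y,w)\mapsto \mb_{K(W^i)}(y-Y^i)(w-W^i)$ is bounded but not Lipschitz, so Kantorovich duality does not apply, and the weak--strong stability of Lemma \ref{lem:roparg} cannot be invoked either, since it requires the measure whose boundary layers are being measured to have an $L^\infty$ density, which the atomic measure $\hat f^N_s$ does not. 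Second, and for the same reason, your statement that ``each boundary indicator integrates to a linear expression in $|X^i-Y^i|$, $|X^j-Y^j|$, $|V^i-W^i|$'' is only true after the empirical average $\frac1N\sum_j \mb_{\Theta(W^i)^{u,+}}(Y^j-Y^i)$ (with a \emph{random} enlargement width $u$ correlated with the coupling errors) has been replaced by the integral against $f_s$; the replacement error is precisely where the difficulty lies and cannot be absorbed into the Gr\"onwall loop by an $L^\infty$ bound.

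The missing ingredient is the pair of law-of-large-numbers estimates, Lemmas \ref{lem_(De)-Poiss} and \ref{lem:LLN}: a Poissonization plus Doob martingale argument giving
\[
\mathbb{E}\lt[\sup_{u\geq 0}\lt|\int \mb_{\Theta(W)^{u,+}}(y-Y)\,(\rho_N-\rho)(dy)\rt|\rt]\leq \frac{C}{\sqrt{N}},
\]
with the supremum over the enlargement parameter taken \emph{inside} the expectation (essential because $u$ depends on the trajectories), together with a conditional-independence variance bound for the non-Lipschitz kernel $h_i(y,w)=-\mb_{K(W_i)}(y)w$. These exploit the independence of the $(Y^j,W^j)$ to produce the central-limit rate $N^{-1/2}$ directly; no Wasserstein-based bound can substitute for them here, both because of the non-Lipschitz kernel and because Proposition \ref{prop_fg} in the phase-space dimension would in any case give a strictly worse rate than $N^{-1/2}$. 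This is exactly the content of the paper's Lemma \ref{lem:coup_T}, whose three-way decomposition ($\alpha^{i,j}$ versus the indicator, particle empirical versus i.i.d.\ empirical, i.i.d.\ empirical versus $f$) you should adopt; once that lemma is in hand, the rest of your outline goes through.
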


	%
	%
	%
	%

\section{Interacting stochastic particle system: Proof of Theorem \ref{thm_SIE}}\label{sec:ext_par}
In this section, we construct a global-in-time solution to the stochastic integral inclusion system \eqref{eq:SIIE} which corresponds to the system \eqref{sys_sde_T} . For this, we regularize the indicator function with respect to phase space $(x,v)$:
\[
\mb^{\eta,\e}_{K(v)}(x) =\mb_K *_{(x,v)} (\phi_\e,\psi_\eta)= \int_{\R^d \times \R^d} \mb_{K(v-w)}(x-y)\phi_\eta(w)\psi_\e(y)\,dydw,
\]
where $\phi_\eta(w) := (1/\eta^d)\phi\left(w/\eta\right)$ with
\[
\phi(v) = \phi(-v) \geq 0, \quad \phi \in \mc^\infty_0(\R^d), \quad \mbox{supp } \phi \subset B(0,1), \quad \mbox{and} \quad \int_{\R^d} \phi(v)\,dv = 1.
\]
Using this newly defined function $\mb^{\eta,\e}_K$, we define $F^{\eta,\e}[\mu]$ as 
\bq\label{eq_f}
F^{\eta,\e}[\mu](x,v)=\int_{\R^d \times \R^d}\mb^{\eta,\e}_{K(v)}(y-x)(w-v)\mu(dy,dw).
\eq
We next recall from \cite[Lemma 4.2]{CCHS} some some basic properties of the regularized indicator function in the lemma below. 
\begin{lemma} (i) For all $ \e > 0$, it holds
\bq\label{lem_diff1}
\int \lt|\mb_{K}^{\e}(x) - \mb_{K}(x)\rt| dx \leq |\pa^{2\e}K|.
\eq
(ii) For all $x \in \mo$ and $0 < \eta \leq 1$, it holds
\bq\label{lem_add}
\int_{\R^d}\lt|\mb_{K(w(x))}^{\eta,\e}(y-x) - \mb_{K(w(x))}^{\e}(y-x)\rt| dy \leq C\eta,
\eq
where $C$ is a positive constant independent of $\e$ and $\eta$.
\end{lemma}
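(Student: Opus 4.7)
For part (i), the plan is elementary. Since the spatial mollifier $\psi_\e$ is supported in $B(0,\e)$, the convolution $\mb_K^\e(x) = (\mb_K * \psi_\e)(x)$ coincides with $\mb_K(x)$ whenever the ball $B(x,\e)$ lies entirely inside $K$ or entirely outside $K$, that is, whenever $d(x,\pa K) > \e$. Hence the integrand is supported in $\pa^\e K$, and since both $\mb_K^\e$ and $\mb_K$ take values in $[0,1]$, the bound $\int |\mb_K^\e - \mb_K|\,dx \le |\pa^\e K| \le |\pa^{2\e}K|$ is immediate.

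For part (ii), my strategy is to rewrite the difference as a convolution in the velocity variable and then reduce the problem to an $L^1$-estimate of $\mb_{K(v-w)} - \mb_{K(v)}$ by exploiting the $L^1$-contraction of mollification in $x$. Abbreviating $v := w(x)$ and changing variables $z = y-x$ (which is harmless because we integrate over $y \in \R^d$), the definition of $\mb^{\eta,\e}$ gives
\[
\mb_{K(v)}^{\eta,\e}(z) - \mb_{K(v)}^\e(z) = \int_{\R^d} \bigl[\mb_{K(v-w)}^\e(z) - \mb_{K(v)}^\e(z)\bigr]\phi_\eta(w)\,dw.
\]
Applying Fubini together with the inequality $\|f * \psi_\e\|_{L^1} \le \|f\|_{L^1}$ reduces matters to controlling $|K(v-w)\Delta K(v)|$ uniformly for $w$ in the support of $\phi_\eta$, that is, for $|w| \le \eta$.

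The key input at this final stage is assumption ${\bf (H2)}$: part (iii) provides the inclusion $K(v) \Delta K(v-w) \subset \Theta(v)^{C|w|,+}$, and part (ii) then yields $|\Theta(v)^{C|w|,+}| \le C|w| \le C\eta$. Integrating against $\phi_\eta$ preserves this bound and produces the desired $C\eta$. I do not anticipate any genuine obstacle here: ${\bf (H2)}$ is essentially tailor-made for this estimate, and the only point requiring care is to notice that the parameter $v = w(x)$ plays no active role in the $y$-integration and behaves throughout as a fixed element, so the estimate is indeed uniform in $x$.
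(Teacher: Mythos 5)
Your proof is correct; note that the paper itself gives no proof of this lemma but simply recalls it from \cite[Lemma 4.2]{CCHS}, and your argument (support of the integrand in $\pa^{\e}K$ for part (i); Fubini, translation invariance and the $L^1$-contraction of mollification reducing part (ii) to $|K(v-w)\Delta K(v)|$, controlled by ${\bf (H2)}$ (ii)--(iii)) is the standard one. The only point worth flagging is that ${\bf (H2)}$ (ii) is stated only for enlargement parameters in $(0,1)$, so for $C|w|\geq 1$ you should instead invoke ${\bf (H1)}$ to bound $|K(v-w)\Delta K(v)|\leq |\mathcal{K}|\leq C'\eta$ directly; this only adjusts the constant.
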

We now consider the following SIEs with smooth force fields and diffusion:
\begin{align}\label{ext_sdeRS}
\begin{aligned}
X_t^{i,\eta,\e}&=X^i_0+ \int_0^t V_s^{i,\eta,\e}\,ds, \quad i = 1,\cdots,N, \quad t > 0,\\
V_t^{i,\eta,\e}&=V^i_0+\frac{1}{N}\sum_{j=1}^N\int_0^t \mb^{\eta,\e}_{K(V_s^{i,\eta,\e})}(X_s^{j,\eta,\e}-X_s^{i,\eta,\e})(V_s^{j,\eta,\e}-V_s^{i,\eta,\e})\,ds+\sqrt{2\sigma}\int_0^t \mathcal{R}(V_s^{i,\eta,\e})\,dB_s^i.
\end{aligned}
\end{align}
Then it is clear that strong existence and uniqueness hold for equation \eqref{ext_sdeRS}. Let us denote by
\[
(\mathcal{X}_t^{N,\eta,\e},\mathcal{V}_t^{N,\eta,\e}):=(X_t^{i,\eta,\e},V_t^{i,\eta,\e})_{i=1,\cdots,N}\quad \mbox{and} \quad \mu_t^{N,\eta,\e}:=\frac{1}{N}\sum_{j=1}^N\delta_{(X_t^{j,\eta,\e}, V_t^{j,\eta,\e})}.
\]
In the lemma below, we estimate the upper bound of the velocity in \eqref{ext_sdeRS} which is useful to control the linear velocity coupling term in the force fields. 
\begin{lemma}\label{vel_bou_trun} Let $(\mathcal{X}_t^{N,\eta,\e},\mathcal{V}_t^{N,\eta,\e})$ be the solution to the system \eqref{ext_sdeRS} on the time interval $[0,T]$. Suppose that 
\[
\max_{1 \leq i \leq N}|V^i_0| \leq V_m,  \quad \mathbb{P} \mbox{- a.s.}
\]
Then it holds 
\[
\max_{1 \leq i \leq N} |V_t^{i,\eta,\e}|\leq V_{m},  \quad \mathbb{P} \mbox{- a.s.},
\]
for $\eta,\e>0$ and $t\in [0,T]$.
\end{lemma}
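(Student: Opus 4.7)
The argument will exploit two structural features: $\mathcal{R}$ vanishes outside $\bv=\{|v|<V_m\}$, so the diffusion switches off when the speed reaches $V_m$; and the alignment drift $\frac{1}{N}\sum_j \mb^{\eta,\e}_{K(V^i)}(X^j-X^i)(V^j-V^i)$ is a weighted convex combination of velocity differences that cannot push the fastest particle past the current maximum speed. To combine these into a pathwise almost-sure bound, the plan is to use the Lyapunov function $\Phi(s):=((s-V_m^2)_+)^3$, which lies in $C^2(\R)$ with $\Phi'$ and $\Phi''$ both supported in $[V_m^2,+\infty)$.

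The central step is to apply It\^o's formula to $\Phi(|V_t^{i,\eta,\e}|^2)$. The key observation---essentially the entire content of the lemma---is that every term in the resulting expansion involving $\mathcal{R}$ or $\mathcal{R}^2$ (namely the stochastic integral $2\sqrt{2\sigma}\,\Phi'(|V_t^i|^2)\mathcal{R}(V_t^i)V_t^i\cdot dB_t^i$, the It\^o correction $2\sigma d\,\Phi'(|V_t^i|^2)\mathcal{R}^2(V_t^i)\,dt$, and the quadratic-variation term $4\sigma\Phi''(|V_t^i|^2)\mathcal{R}^2(V_t^i)|V_t^i|^2\,dt$) is multiplied by $\Phi'$ or $\Phi''$, whose support is disjoint from that of $\mathcal{R}$. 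All such terms therefore vanish identically pathwise and the whole stochastic contribution drops out. The evolution reduces to the deterministic equation
\begin{equation*}
\frac{d}{dt}\Phi(|V_t^{i,\eta,\e}|^2)=\Phi'(|V_t^{i,\eta,\e}|^2)\,\frac{1}{N}\sum_{j=1}^{N}\mb^{\eta,\e}_{K(V_t^i)}(X_t^j-X_t^i)\bigl(|V_t^j|^2-|V_t^i|^2-|V_t^j-V_t^i|^2\bigr),
\end{equation*}
obtained from the elementary identity $2a\cdot(b-a)=|b|^2-|a|^2-|b-a|^2$.

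Setting $W_t^i:=(|V_t^{i,\eta,\e}|^2-V_m^2)_+$ and noting that $\Phi'(|V_t^i|^2)=3(W_t^i)^2$ is nonzero only on the set where $|V_t^i|^2=V_m^2+W_t^i$, one bounds $|V_t^j|^2-|V_t^i|^2\leq W_t^j-W_t^i\leq \max_k W_t^k$. Dropping the non-positive $-|V_t^j-V_t^i|^2$, using $\mb^{\eta,\e}\leq 1$, and summing over $i$ yields
\begin{equation*}
\frac{d}{dt}\sum_{i=1}^{N}(W_t^i)^3\leq 3\max_k W_t^k\sum_{i=1}^{N}(W_t^i)^2\leq 3N\sum_{i=1}^{N}(W_t^i)^3.
\end{equation*}
Gr\"onwall's lemma together with $W_0^i=0$ (which follows from the hypothesis $|V_0^i|\leq V_m$) then forces $W_t^i\equiv 0$, hence $|V_t^{i,\eta,\e}|\leq V_m$ almost surely for all $t\in[0,T]$ and all $i$. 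The only delicate point---and the conceptual crux of the lemma---is the disjoint-support cancellation that eliminates every $\mathcal{R}$-dependent term in the It\^o expansion; once that is verified the remainder is an elementary scalar ODE comparison, which is why no stopping-time argument or martingale analysis is required.
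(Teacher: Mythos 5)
Your proof is correct, and it takes a genuinely different route from the paper's. The paper argues in two steps: first it introduces the random (non-stopping) time $\tau_t^{i,\eta,\e}=\sup_{s\le t}\{|V_s^{i,\eta,\e}|=V_m\}$ and splits into the events where the path after $\tau$ stays below or above $V_m$ (on the latter the stochastic integral vanishes because $\mathcal{R}$ is supported in $\bv$), obtaining the a priori growth bound $A_t^{\eta,\e}\le V_m e^T(1+T)$; second, it runs a pathwise contradiction argument with the quantity $Z_s^{i,\eta,\e}=(\widetilde A(\omega)-|V_s^{i,\eta,\e}(\omega)|)^2/2$ around a hypothetical interior maximum exceeding $V_m$. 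You instead package the same two structural facts --- the disjointness of $\mathrm{supp}\,\mathcal{R}$ and $\{|v|\ge V_m\}$, and the sub-Lyapunov character of the alignment drift --- into the single $C^2$ barrier $\Phi(s)=((s-V_m^2)_+)^3$, so that one application of It\^o's formula kills every noise term identically (including the quadratic-variation correction, which is why the cube rather than the square of $(\cdot)_+$ is needed for $\Phi''$ to retain the right support) and reduces the claim to a scalar Gr\"onwall inequality for $\sum_i(W_t^i)^3$ with zero initial data. This buys you a proof with no event-splitting, no random times, and no contradiction argument; in particular it avoids the somewhat delicate pathwise reasoning in the paper's Step B. The paper's route, on the other hand, additionally produces the explicit intermediate growth estimate $A_t^{\eta,\e}\le V_m e^t(1+t)$, which your argument does not need because the lemma already assumes the solution exists on $[0,T]$. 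All the individual steps you use check out: $\Phi\in C^2$ suffices for It\^o, $\mb_{K(v)}^{\eta,\e}\in[0,1]$, the identity $2a\cdot(b-a)=|b|^2-|a|^2-|b-a|^2$, the bound $|V^j|^2-|V^i|^2\le W^j-W^i$ on $\{\Phi'\neq 0\}$, and $\max_k W^k\sum_i(W^i)^2\le N\sum_k(W^k)^3$.
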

\begin{proof} We divide the proof into two steps:
\begin{itemize}
\item In {\bf Step A}, we show that the maximal value of $|V^{i,\eta,\e}_t|$ over $i=1,\cdots,N$ has a finite speed of growth in time $t$. 
\item In {\bf Step B}, we show that this maximal value can not exceed the ball of radius $V_{m}$ almost surely, and complete the desired result.
\end{itemize}
$\diamond$ {\bf Step A.-} We set 
\begin{equation*}
A^{\eta,\e}_t:=\max_{1 \leq i \leq N} |V_t^{i,\eta,\e}|.
\end{equation*}
We also notice that if we set 
\[
\Omega_0=\left \{ \omega\in \Omega \ | s\in [0,T]\mapsto\ V_s^{i,\eta,\e}(\omega) \mbox{ is  continuous}  \right\} \in \mathcal{F},
\]
then $\mathbb{P}(\om_0) = 1$ since the paths of the Bwonian motion are almost surely continuous. For $t\in[0,T]$, we define the random (but not stopping) time $\tau_t^{i,\eta,\e}$ as
\begin{equation*}
\tau^{i,\eta,\e}_t=\sup_{s\leq t}\left \{  |V^{i,\eta,\e}_s|=V_{m}\right \}.
\end{equation*}
Then we obtain
\begin{equation*}
\begin{split}
|V^{i,\eta,\e}_t|&= \left |  \int_{\tau^{i,\eta,\e}_t}^t F^{\eta,\e}[\mu_s^{N,\eta,\e}](X_s^{i,\eta,\e},V_s^{i,\eta,\e})\,ds+\sqrt{2\sigma}\int_{\tau^{i,\eta,\e}_t}^t\mathcal{R}(V_s^{i,\eta,\e})\,dB_s +V^{i,\eta,\e}_{\tau^{i,\eta,\e}_t}     \right |\\
&\leq \int_0^t\left | F^{\eta,\e}[\mu_s^{N,\eta,\e}](X_s^{i,\eta,\e},V_s^{i,\eta,\e}) \right |ds+V_{m}+\sqrt{2\sigma}\left |\int_ {\tau^{i,\eta,\e}_t}^t\mathcal{R}(V_s^{i,\eta,\e})\,dB_s \right |
\end{split}
\end{equation*}
Let us define two event sets $\om_A$ and $\om_B$ by	
\[
\Omega_A:=\left \{ [\tau^{i,\eta,\e}_t,t] \subset \left \{ s \in [0,T] \ | \ |V_s^{i,\eta,\e}|\leq V_{m} \right \} \right \},
\]
and
\[
\Omega_B:=\left \{ ]\tau^{i,\eta,\e}_t,t] \subset \left \{ s \in [0,T] \ | \ |V_s^{i,\eta,\e}|> V_{m} \right \} \right \}.
\]
Then by definition of $\tau^{i,\eta,\e}_t$ and the fact that $s \mapsto V_s^{i,\eta,\e}$ is almost surely continuous, we get $\mathbb{P}(\Omega_A)+\mathbb{P}(\Omega_B)=1$. For the event $\om_A$, it is clear to get 
		\[
		|V_s^{i,\eta,\e}|\leq V_{m}.
		\]
		For the event $\Omega_B$,
		\[
		\int_{\tau^{i,\eta,\e}_t}^t\mathcal{R}_{\varepsilon}(V_s^{i,\eta,\e})dB_s=0,
		\]
		and this yields
		\[
		|V_i^{i,\eta,\e}|\leq \int_0^t\left | F^{\eta,\e}[\mu_s^{	N,\eta,\e}](X_s^{i,\eta,\e},V_s^{i,\eta,\e}) \right |ds+ V_{m}.
		\]
		On the other hand, the alignment force term in the above inequality is estimated as
		\[
		\left | F^{\eta,\e}[\mu_s^{N,\eta,\e}](X_s^{i,\eta,\e},V_s^{i,\eta,\e}) \right | \leq \int_{\mathbb{R}^d\times \mathbb{B}_{A^{\eta,\e}_s}^d}\lt(|v|+|V_s^{i,\eta,\e}|\rt)\mu^{N,\eta,\e}_s(dx,dv)\leq A^{\eta,\e}_s+|V_s^{i,\eta,\e}|.
		\]
		This implies 
		\[
		|V_t^{i,\eta,\e}|\leq V_{m}e^t+\int_{0}^t A^{\eta,\e}_s e^{t-s}\,ds, \quad \mathbb{P} \mbox{-a.s.,}
		\]
		and subsequently, by definition of $A^{\eta,\e}_t$, we obtain		
		\[
		A^{\eta,\e}_t\leq V_{m}e^t+\int_{0}^t A^{\eta,\e}_s e^{t-s}ds,
		\]
		and by applying Gronwall's inequality we finally have 
		\begin{equation*}
			A^{\eta,\e}_t\leq V_{m}e^t(1 + t)\leq V_{m}e^T(1 + T), \quad \mathbb{P} \mbox{-a.s.}
		\end{equation*}
$\diamond$ {\bf Step B.-} It follows from the assumption that $A_0\leq V_{m}$. Suppose that for some $\omega\in\om$ it holds $\widetilde{A}(\omega):=\sup_{t\in [0,T]}A_t(\omega)> V_{m}$. Then, for some $i=1,\cdots,N$, there exists $t_0\in [0,T]$ such that
\[
\widetilde{A}(\omega)=|V_{t_0}^{i,\eta,\e}(\omega)|>V_{m}.
\]
We now choose a neighborhood $V_\omega$ of $t_0$ such that $|V_s^{i,\eta,\e}(\omega)| > V_{m}$ for all $s \in V_\omega$. Then $V_t^{i,\eta,\e}(\omega)$ is differentiable in that neighborhood and 
\begin{equation*}
\begin{split}
\frac12\frac{d|V_s^{i,\eta,\e}(\omega)|^2}{ds}&= \int_{\R^d \times \R^d}  \mb_{K(V_s^{i,\eta,\e}(\omega))}(x-X_s^{i,\eta,\e}(\omega))(v-V_s^{i,\eta,\e}(\omega))\cdot V_s^{i,\eta,\e}(\omega)\mu^{N,\eta,\e}_s(\omega)(dx,dv)  \\
				& \leq \left |   \int_{\R^d \times \lt\{|v|\leq \widetilde{A}(\omega)\rt\}}  \underbrace{(v-V_s^{i,\eta,\e}(\omega))\cdot V_s^{i,\eta,\e}(\omega)}_{\leq (\widetilde{A}(\omega)-|V_s^{i,\eta,\e}(\omega)|)|V_s^{i,\eta,\e}(\omega)|}\mu^{N,\eta,\e}_s(\omega)(dx,dv)  \right |\\
				& \leq (\widetilde{A}(\omega)-|V_s^{i,\eta,\e}(\omega)|)|V_s^{i,\eta,\e}(\omega)|.
			\end{split}
		\end{equation*}
		Set
		\bq\label{eq_zs}
		Z^{i,\eta,\e}_s(\omega):=\frac{(\widetilde{A}(\omega)-|V_s^{i,\eta,\e}(\omega)|)^2}{2}.
		\eq
		Then it is straightforward to get 		
		\[
		Z^{i,\eta,\e}_{t_0}(\omega)=0 \quad \mbox{and} \quad 		\left | \frac{dZ^{i,\eta,\e}_s(\omega)}{ds}  \right |=\left | -\frac{d|V^{i,\eta,\e}_s(\omega)|}{ds}\lt(\widetilde{A}(\omega)-|V^{i,\eta,\e}_s(\omega)|\rt) \right |\leq 2Z^{i,\eta,\e}_s(\omega),
		\]
This yields $Z^{i,\eta,\e}_t(\omega)=0$ for $t \in V_\omega$, and in particular $V_{\omega}$ does not depend on $\omega$ and thus $V_{\omega}=[0,T]$. Thus $V^{i,\eta,\e}_t(\omega)> V_{m}$ for all $t\in[0,T]$ due to \eqref{eq_zs}. This subsequently implies the event 
\bq\label{contra}
\lt\{\sup_{t\in [0,T]} \sup_{1 \leq i \leq N} |V^{i,\eta,\e}_t|>V_{m} \rt\},
\eq
is of probability one. On the other hand, by the assumption, the event 
\[
\lt\{\sup_{t\in [0,T]} \sup_{1 \leq i \leq N} |V^i_0|\leq V_{m} \rt\},
\]
is of probability one, which contradicts to \eqref{contra}. This completes the proof.
\end{proof}
We are now in position to give the proof of Theorem \ref{thm_SIE}. As mentioned before, we are going to take into account the generalized boundary set $\widetilde\pa K$ defined in Definition \ref{def_gb} to construct a global-in-time weak solution to the stochastic integral inclusion system \eqref{eq:SIIE}. We remind the reader that the similar strategy is used for the system \eqref{sys_sde_T} without noises, i.e., $\sigma = 0$ in \cite{CCHS}.
\begin{proof}[Proof of Theorem \ref{thm_SIE}]
We divide the proof into two steps.
	
$\diamond$ {\bf Step A: Tightness.-} It follows from Lemma \ref{vel_bou_trun} that if the $(V_i^0)_{i=1,\cdots,N}$ are distributed with a law compactly supported in $\bv$, then $(V_t^{i,\eta,\e})_{i=1,\cdots,N}$ lie inside $\bv$ $\mathbb{P}$-a.s. Using this fact, we find that for $0 \leq s,t \leq T$
$$\begin{aligned}
|\mathcal{X}_t^{N,\eta,\e}-\mathcal{X}_s^{N,\eta,\e}|&\leq CV_m|t-s|\\
|\mathcal{V}_t^{N,\eta,\e}-\mathcal{V}_s^{N,\eta,\e}|&\leq CV_m|t-s|+\sqrt{2\sigma}\sup_{1 \leq i\leq N}\sup_{0 \leq s<r<t \leq T}\lt|\int_s^r \mathcal{R}(V_u^{i,\eta,\e})\,dB_u^i  \rt|,
\end{aligned}$$
for some positive constant $C$. Thus we obtain
\[
|\mathcal{X}_t^{N,\eta,\e}-\mathcal{X}_s^{N,\eta,\e}|+	|\mathcal{V}_t^{N,\eta,\e}-\mathcal{V}_s^{N,\eta,\e}|\leq (C+U_T^{\eta,\e})|t-s|^{1/3},
\]
where
\[
U_T^{\eta,\e}:= \sqrt{2\sigma}\sup_{1 \leq i \leq N}\sup_{0\leq s<r<t\leq T}\lt|\int_s^r \mathcal{R}(V_u^{i,\eta,\e})\,dB_u^i  \rt||t-s|^{-1/3}.
\]
On the other hand, by using Burkholder-Davis-Gundy inequality, we get that for any $p > 1$
$$\begin{aligned}
\E\lt[ \lt(\sup_{1 \leq i \leq N}\sup_{s<r<t}\lt|\int_s^r \mathcal{R}(V_u^{i,\eta,\e})\,dB_u^i  \rt|\rt)^{2p}\rt] &\leq C\sum_{i=1}^N\E\lt[ \sup_{s<r<t}\lt|\int_s^r \mathcal{R}(V_u^{i,\eta,\e})\,dB_u^i  \rt|^{2p}\rt]\\
&\leq C\sum_{i=1}^N C_p\E\lt[ \lt|\int_s^t \mathcal{R}^2(V_s^{i,\eta,\e})\,ds \rt|^p   \rt]\\
&\leq C_{N,p}\|\mathcal{R}\|^2_{L^\infty}|t-s|^p.
\end{aligned}$$
This together with Kolmogorov's continuity theorem gives that for $p>1$ the process 
\[
t\mapsto \sup_{1 \leq i \leq N}\int_s^t \mathcal{R}(V_u^{i,\eta,\e})\,dB_u^i \quad \mbox{is $\gamma$-H\"older $\mathbb{P}$-a.s. for any } \gamma \in \lt(0,\frac{p-1}{2p}\rt).
\]
This subsequently implies $U_T^{\eta,\e}<\infty$ $\mathbb{P}$-almost surely. Let us now denote by
\[
K(R,a):=\lt\{f\in \mc^{1/3}([0,T],\R^{2dN}) : \sup_{0\leq s<t\leq T}\frac{|f(t)-f(s)|}{|t-s|^{1/3}}\leq R \quad \mbox{and} \quad f(0)\in \mathbb{B}^{2dN}_a
\rt\},
\]
which is a compact subset of $C([0,T],\R^{2dN})$ due to Arzel\'a-Ascoli theorem. Then, for all $\eta,\e>0$, we obtain that if $(\mathcal{X}^{N,\eta,\e}_t,\mathcal{V}^{N,\eta,\e}_t)_{t\in [0,T]} \notin K(R,a)$, then either $C+U_T^{\eta,\e}\geq R$ or $|(\mathcal{X}^{N}_0,\mathcal{V}^{N}_0)|\geq a$.
This yields
\[
\mathbb{P}\lt((\mathcal{X}^{N,\eta,\e}_t,\mathcal{V}^{N,\eta,\e}_t)_{t\in [0,T]} \notin K(R,a) \rt)\leq \mathbb{P}\lt(C+U_T^{\eta,\e}\geq R\rt)+\mathbb{P}\lt(|(\mathcal{X}^{N}_0,\mathcal{V}^{N}_0)|\geq a \rt).
\]
On the other hand, since $U_T^{\eta,\e}$ is almost surely finite, we can find some $R > 0$ such that $\mathbb{P}\lt( C+U_T^{\eta,\e}\geq R \rt)\leq a^{-1}$. Moreover, it follows from Chebyshev's inequality that
\[
\mathbb{P}\bigl(|(\mathcal{X}^{N}_0,\mathcal{V}^{N}_0)|\geq a \bigr)\leq \frac{m_2(f_0^N)}{a^2},
\]
where $m_2(f_0^N)$ denotes the second-order moment of $f_0^N$. Hence we have
\[
\sup_{\eta,\e>0}\mathbb{P}\lt( (\mathcal{X}_t^{N,\eta,\e},\mathcal{V}_t^{N,\eta,\e}) \notin K(R,a) \rt)\leq Ca^{-1}(1 \vee a^{-1}),
\]
and this concludes that the family of law $(P^{\eta,\e})_{\eta,\e>0}$ under $\mathbb{P}$ of $(\mathcal{X}^{N,\eta,\e}_t,\mathcal{V}^{N,\eta,\e}_t,B_t)_{t\in [0,T]}$ is tight.

$\diamond$ {\bf Step B: Identification of the limit.-} By {\bf Step A} and Prokhorov's theorem, we can choose a subsequence $(P^{\eta(n),\e(n)})_{n}$ converging to some $P$. Then again by Skorokhod's theorem, we can find a probability space $(\overline{\Omega},\overline{\mathcal{F}},\overline{\mathbb{P}})$ together with some sequence of process $(\overline{\mathcal{X}}^{N,\eta(n),\e(n)}_t,\overline{\mathcal{V}}^{N,\eta(n),\e(n)}_t,\overline{B}_t)_{n}$ and a process $(\overline{\mathcal{X}}^{N}_t,\overline{\mathcal{V}}^{N}_t,W_t)_{t\in [0,T]}$ such that theirs law under $\overline{\mathbb{P}}$ are respectively $(P^{\eta(n),\e(n)})_{n}$ and $P$, and $(\overline{\mathcal{X}}^{N,\eta(n),\e(n)}_.,\overline{\mathcal{V}}^{N,\eta(n),\e(n)}_.,\overline{B}_.)_{n}$ goes $\overline{\mathbb{P}}$-almost surely to $(\overline{\mathcal{X}}^{N}_t,\overline{\mathcal{V}}^{N}_t,W_t)_{t\in [0,T]}$. Note that the third marginal of $P^{\eta(n),\e(n)}$ is the law of a ($dN$)-Brownian motion so that $\overline{B}_.$ is a Brownian motion under $\overline{\mathbb{P}}$. Thus  $((\overline{\Omega},\overline{\mathcal{F}},\overline{\mathbb{P}}), (\overline{\mathcal{X}}^{N,\eta(n),\e(n)}_.,\overline{\mathcal{V}}^{N,\eta(n),\e(n)}_.,\overline{B}_.))$ is a weak solution to \eqref{ext_sdeRS} for each $n$ since uniqueness in law holds for this equation. We now show  that $((\overline{\Omega},\overline{\mathcal{F}},\overline{\mathbb{P}}), (\overline{\mathcal{X}}^{N}_t,\overline{\mathcal{V}}^{N}_t,W_t)_{t\in [0,T]})$ is a weak solution to stochastic integral inclusion system \eqref{eq:SIIE}. Note that since $\overline{\mathcal{X}}^{N,\eta(n),\e(n)}_.,\overline{\mathcal{V}}^{N,\eta(n),\e(n)}_.,\overline{B}_.$ is a weak solution to \eqref{ext_sdeRS}, we obtain that $\overline{\mathbb{P}}$-almost surely 
\begin{align}\label{est_conv}
\begin{aligned}
	\overline{V}_t^i&-\overline{V}_0^i-\frac{1}{N}\sum_{j=1}^N\int_0^t \mb_{K(\overline{V}_s^i)\setminus \tilde{\pa}K(\overline{V}_s^i)}(\overline{X}_s^j-\overline{X}_s^i)(\overline{V}_s^j-\overline{V}_s^i)\,ds-\sqrt{2\sigma}\int_0^t \mathcal{R}(\overline{V}_s^i)\,d\overline{B}_s^i\\
	& = \overline{V}_t^i-\overline{V}_t^{i,\eta,\e}-\sqrt{2\sigma}\int_0^t \bigl(\mathcal{R}(\overline{V}_s^i)-\mathcal{R}(\overline{V}_s^{i,\eta,\e})\bigr)d\overline{B}_s^i - \frac{1}{N}\sum_{j=1}^N\int_0^t I^{\eta,\e}_s\,ds\\
	&=\overline{V}_t^i-\overline{V}_t^{i,\eta,\e}-\sqrt{2\sigma}\int_0^t \bigl(\mathcal{R}(\overline{V}_s^i)-\mathcal{R}(\overline{V}_s^{i,\eta,\e})\bigr)d\overline{B}_s^i-\frac{1}{N}\sum_{j=1}^N\int_0^t I_s^{\eta,\e}\mb_{\{\overline{X}_s^j-\overline{X}_s^i \notin\tilde{\pa}K(V_s^i)) \}}\,ds\cr
	&\quad -\frac{1}{N}\sum_{j=1}^N\int_0^tI_s^{\eta,\e}\mb_{\{\overline{X}_s^j-\overline{X}_s^i \in\tilde{\pa}K(V_s^i)) \}}\,ds,
\end{aligned}
\end{align}
where 
\[
I_s^{\eta,\e} := \mb_{K(\overline{V}_s^i)\setminus \tilde{\pa}K(\overline{V}_s^i)}(\overline{X}_s^j-\overline{X}_s^i)(\overline{V}_s^j-\overline{V}_s^i)-\mb^{\eta,\e}_{K(\overline{V}_s^{i,\eta,\e})}(\overline{X}_s^{j,\eta,\e}-\overline{X}_s^{i,\eta,\e})(\overline{V}_s^{j,\eta,\e}-\overline{V}_s^{i,\eta,\e}).
\]
We then estimate the third and fourth terms on the right hand side of the equality \eqref{est_conv}. By definition of the generalized boundary set $\tilde{\pa}K(\cdot)$, it is clear that if
\[
\overline{X}_s^j- \overline{X}_s^i\notin \tilde{\pa}K(\overline{V}_s^i),
\]
then
\[
\mb^{\eta,\e}_{K(\overline{V}_s^{i,\eta,\e})}(\overline{X}_s^{i,\eta,\e}-\overline{X}_s^{j,\eta,\e})(\overline{V}_s^{j,\eta,\e}-\overline{V}_s^{i,\eta,\e}) \to   \mb_{K(\overline{V}_s^i)\setminus \tilde{\pa}K(\overline{V}_s^i)}(\overline{X}_s^i-\overline{X}_s^j)(\overline{V}_s^j-\overline{V}_s^i),
\]
weakly as $\e,\eta \to 0$. That is,
\[
\frac{1}{N}\sum_{j=1}^N\int_0^t I_s^{\eta,\e}\mb_{\{\overline{X}_s^j-\overline{X}_s^i \notin\tilde{\pa}K(V_s^i)) \}}\,ds \to 0 \quad \mbox{weakly as } \e,\eta \to 0.
\]
We also find 
$$\begin{aligned}
&-\frac{1}{N}\sum_{j=1}^N\int_0^tI_s^{\eta,\e}\mb_{\{\overline{X}_s^j-\overline{X}_s^i \in\tilde{\pa}K(V_s^i)) \}}\,ds\cr
&\quad = \frac{1}{N}\sum_{j=1}^N\int_0^t \bigl(\mb^{\eta,\e}_{K(\overline{V}_s^{i,\eta,\e})}(\overline{X}_s^{j,\eta,\e}-\overline{X}_s^{i,\eta,\e})(\overline{V}_s^{j,\eta,\e}-\overline{V}_s^{i,\eta,\e})\bigr)\mb_{\{\overline{X}_s^j-\overline{X}_s^i \in \tilde{\pa}K(\overline{V}_s^i)\}}\,ds\cr
&\quad = \frac{1}{N}\sum_{j=1}^N\int_0^t \bigl(\mb^{\eta,\e}_{K(\overline{V}_s^{i,\eta,\e})}(\overline{X}_s^{j,\eta,\e}-\overline{X}_s^{i,\eta,\e})(\overline{V}_s^{j,\eta,\e}-\overline{V}_s^{j}+\overline{V}_s^{i}-\overline{V}_s^{i,\eta,\e})\bigr)\mb_{\{\overline{X}_s^j-\overline{X}_s^i \in \tilde{\pa}K(\overline{V}_s^i)\}}\,ds\\
		&\qquad + \frac{1}{N}\sum_{j=1}^N\int_0^t \bigl(\mb^{\eta,\e}_{K(\overline{V}_s^{i,\eta,\e})}(\overline{X}_s^{j,\eta,\e}-\overline{X}_s^{i,\eta,\e})\bigr)(\overline{V}_s^j-\overline{V}_s^i)\mb_{\{\overline{X}_s^j-\overline{X}_s^i \in \tilde{\pa}K(\overline{V}_s^i)\}}\,ds\\
		&\quad =: \int_0^t J_s^{\eta,\e}ds+\frac{1}{N}\sum_{j=1}^N\int_0^t \alpha_{s}^{i,j,\eta,\e}(\overline{V}_s^j-\overline{V}_s^i)\mb_{\{\overline{X}_s^j-\overline{X}_s^i \in \tilde{\pa}K(\overline{V}_s^i)\}}\,ds,
	\end{aligned}$$
where $J^{\eta,\e}_t \to 0$ weakly as $\eta,\e \to 0$ for $t \in [0,T]$ due to the tightness. Thus, by combining the all of the above observations, we have that for any $h > 0$, there exist $\eta,\e > 0$ small enough such that 
\bq\label{est_cl}
\lt|\overline{V}_t^i-\overline{V}_0^i-\frac{1}{N}\sum_{j=1}^N\int_0^t \tilde\mb^{i,j,\eta,\e}_s(\overline{V}_s^j-\overline{V}_s^i)\,ds-\sqrt{2\sigma}\int_0^t \mathcal{R}(\overline{V}_s^i)\,d\overline{B}_s^i\rt|\leq h,
\eq
where
\[
\tilde\mb^{i,j,\eta,\e}_s := \mb_{K(\overline{V}_s^i)\setminus \tilde{\pa}K(\overline{V}_s^i)}(\overline{X}_s^j-\overline{X}_s^i)+\alpha_s^{i,j,\eta,\e}\mb_{\{\overline{X}_s^j-\overline{X}_s^i \in \tilde{\pa}K(\overline{V}_s^i)\}}.
\]
On the other hand, we find that for all $\eta,\e > 0$
\[
\frac{1}{N}\sum_{j=1}^N\int_0^t \tilde\mb^{i,j,\eta,\e}_s(\overline{V}_s^j-\overline{V}_s^i)\,ds\in \int_0^t\tilde{F}[\overline{\mu}_s^N](\overline{X}_s^i,\overline{V}_s^i) \,ds.
\]
Note that the above set is closed. Thus, this together with \eqref{est_cl} yields
\[
\overline{V}_t^i-\overline{V}_0^i-\sqrt{2\sigma}\int_0^t \mathcal{R}(\overline{V}_s^i)\,d\overline{B}_s^i\in
	 \int_0^t\tilde{F}[\overline{\mu}_s^N](\overline{X}_s^i,\overline{V}_s^i) \,ds,
\]
where $\overline{\mu}_t^N := \frac 1N \sum_{j=1}^N \delta_{(\overline{X}_t^j, \overline{V}_t^j)}$. This completes the proof. 
\end{proof}
	
\section{Nonlinear stochastic integral system: Proof of Theorem \ref{thm_SDE}}\label{sec:ext_sde}
The purpose of this section is to show the existence and uniqueness of solutions to the following Nonlinear SIEs:
	\begin{equation}
		\label{SDE:Tr}
		\left\{ \begin{array}{ll}
			\displaystyle	Y_t = Y_0+\int_0^t W_s\,ds, & \\[3mm]
			\displaystyle W_t = W_0+ \int_0^t F[f_s](Y_s,W_s)\,ds+  \sqrt{2\sigma}\int_0^t\mathcal{R}(W_s)\,dB_s, & \\[3mm]
			f_t=\LL(Y_t,W_t).& 
		\end{array} \right.
	\end{equation}
For this, we first give the so called weak-strong stability estimate under the assumptions ${\bf (H1)}$-${\bf (H2)}$ for the set-valued function $K(\cdot)$.
	\begin{lemma}\label{lem:roparg}
		Let $(Y,W)$ and $(Y',W')$ two random variables on $\R^{2d}$ and denote $f=\LL(Y,W)$ and $f'=\LL(Y',W')$. Assume that $f\in L^{\infty}(\R^{2d})$ and both $f$ and $f'$ are compactly supported in velocity in $\bv$. Then there exists a constant depending only on $V_{m}$ such that
		$$
		\mathbb{E}\lt[|F[f](Y,W)-F[f'](Y',W')|\rt]\leq C\|f\|_{L^1\cap L^\infty}\mathbb{E}\lt[|Y-Y'|+|W-W'|\rt].
		$$ 
	\end{lemma}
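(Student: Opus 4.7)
The plan is to introduce, on an enlarged probability space, an independent copy $(\tilde Y, \tilde W, \tilde Y', \tilde W')$ of $(Y, W, Y', W')$, independent of $(Y,W,Y',W')$. Since the marginals are $(\tilde Y, \tilde W) \sim f$ and $(\tilde Y', \tilde W') \sim f'$, the deterministic integrals defining $F[f]$ and $F[f']$ can be represented as conditional expectations, yielding
\begin{equation*}
F[f](Y,W) - F[f'](Y',W') = \E\bigl[\mathbf{1}_{K(W)}(\tilde Y - Y)(\tilde W - W) - \mathbf{1}_{K(W')}(\tilde Y' - Y')(\tilde W' - W') \bigm\vert Y, W, Y', W' \bigr].
\end{equation*}
After applying Jensen's inequality and Fubini, the task reduces to bounding the full joint expectation of the integrand's absolute value.

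Next I would apply $|ac - bd| \le |a - b||c| + |b||c - d|$ with $a = \mathbf{1}_{K(W)}(\tilde Y - Y)$, $b = \mathbf{1}_{K(W')}(\tilde Y' - Y')$, $c = \tilde W - W$, $d = \tilde W' - W'$. The ``velocity'' piece $|b||c - d| \leq |\tilde W - \tilde W'| + |W - W'|$ has joint expectation $2\E[|W - W'|]$ since $(\tilde W, \tilde W') \overset{d}{=} (W, W')$. In the ``indicator'' piece $|a-b||c|$ the factor $|c|\leq 2V_m$ thanks to the compact velocity support in $\bv$, and I would decompose $|a - b|$ by adding and subtracting $\mathbf{1}_{K(W')}(\tilde Y - Y)$: assumption $\bf{(H2)}$(iii) gives $|\mathbf{1}_{K(W)}(\tilde Y - Y) - \mathbf{1}_{K(W')}(\tilde Y - Y)| \leq \mathbf{1}_{\Theta(W)^{C|W - W'|,+}}(\tilde Y - Y)$, while the rope-argument Lemma~\ref{lem_est2} yields $|\mathbf{1}_{K(W')}(\tilde Y - Y) - \mathbf{1}_{K(W')}(\tilde Y' - Y')| \leq \mathbf{1}_{\pa^{2|Y - Y'|}K(W')}(\tilde Y - Y) + \mathbf{1}_{\pa^{2|\tilde Y - \tilde Y'|}K(W')}(\tilde Y - Y)$.

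The decisive structural feature is that all three resulting indicators are evaluated at the single point $\tilde Y - Y$, which allows the $\tilde Y$-integration to be carried out against the $y$-marginal density of $f$, bounded by $\|f\|_{L^\infty}|\bv|$ because of compact velocity support. For a fixed radius $\e$, assumption $\bf{(H2)}$(i)--(ii) gives a bound of order $\|f\|_{L^\infty}\e$, so the first two indicator terms contribute $C\|f\|_{L^\infty}\E[|W - W'|]$ and $C\|f\|_{L^\infty}\E[|Y - Y'|]$ respectively. For the random-radius third indicator, integrating $\tilde Y$ first produces $C\|f\|_{L^\infty}|\tilde Y - \tilde Y'|$, whose outer expectation equals $C\|f\|_{L^\infty}\E[|Y - Y'|]$ via $(\tilde Y, \tilde Y') \overset{d}{=} (Y, Y')$. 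Combining these with the $2\E[|W - W'|]$ from the velocity piece and using $\|f\|_{L^1} = 1$ to absorb the free constant into $\|f\|_{L^1 \cap L^\infty}$ yields the claimed estimate.

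The main obstacle I anticipate is choosing the indicator-difference decomposition so that the $L^\infty$-bound on $f$ is actually usable. The apparently more natural alternative of subtracting $\mathbf{1}_{K(W)}(\tilde Y' - Y')$ instead would leave an $\bf{(H2)}$(iii)-term of the form $\mathbf{1}_{\Theta(W)^{C|W - W'|,+}}(\tilde Y' - Y')$, whose expectation cannot be controlled by $\|f\|_{L^\infty}$ since $\tilde Y' \sim f'$ and no $L^\infty$-bound on $f'$ is assumed; the split I propose routes all three boundary-type indicators through the argument $\tilde Y - Y$, which is why the $L^\infty$ norm of $f$ suffices. A minor technicality is that $\bf{(H2)}$(ii) is stated for $\e \in (0,1)$; for larger radii I would use the trivial bound $\mathbf{1}_\cdot \leq 1$, which is enough since the claimed inequality holds routinely when $\E[|Y-Y'|+|W-W'|] \gtrsim 1$.
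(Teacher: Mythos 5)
Your overall architecture coincides with the paper's: representing $F[f](Y,W)-F[f'](Y',W')$ as the integral of the pointwise difference against the coupling $\pi=\LL((Y,W),(Y',W'))$ is exactly your independent-copy/conditional-expectation device, and the paper likewise splits into a velocity term, a rope-argument term, and a set-difference term. However, your particular ordering of the add-and-subtract produces one term that you do not actually control, namely the third indicator $\mb_{\pa^{2|\tilde Y-\tilde Y'|}K(W')}(\tilde Y-Y)$. You assert that ``integrating $\tilde Y$ first produces $C\|f\|_{L^\infty}|\tilde Y-\tilde Y'|$'', but the radius of the set depends on $\tilde Y$ itself, so the $\tilde Y$-integral cannot be performed against $\rho$ with the set frozen (and the asserted output still contains $\tilde Y$, which would have been integrated out). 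Integrating over $Y$ first fails as well: the set parameter is $W'$, and the joint law of $(Y,W')$ is a cross-marginal of $\pi$ carrying no density bound (only $(Y,W)\sim f$ and $Y\sim\rho$ are in $L^\infty$). For an adversarial coupling, say $W'=g(Y)$, the set $\{z\,:\,\tilde Y-z\in\pa^{r}K(g(z))\}$ can have Lebesgue measure of order one rather than order $r$ (think of the vision cone, with $g$ chosen so that $\tilde Y - z$ always sits on the lateral boundary of $K(g(z))$), so this term only yields an $O(1)$ bound instead of $C\E[|Y-Y'|]$.

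The diagnosis in your last paragraph is also backwards. The paper takes precisely the decomposition you reject: rope argument with the fixed set $K(W)$ at the two points, then the set difference $K(W)\Delta K(W')$ evaluated at the primed point. The resulting $\mb_{\Theta(W)^{C|W-W'|,+}}(\tilde Y'-Y')$ is perfectly controllable: one transfers the evaluation point back to $\tilde Y-Y$ via $\mb_A(x)\le\mb_{A^{|x-y|,+}}(y)$ at the cost of enlarging the radius by $|Y-Y'|+|\tilde Y-\tilde Y'|$, then splits and uses the two integration strategies (over $\tilde Y\sim\rho$ alone when the radius involves only $(Y,Y',W,W')$, and over $(Y,W)\sim f$ jointly by Fubini when the radius involves the copy). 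The structural requirement is that every term whose radius depends on the copy variables must carry the set parameter $W$, so that the pair consisting of the set parameter and the evaluation variable, i.e.\ $(W,Y)$, has the bounded joint density $f$; your term violates this because its set parameter is $W'$. The gap is repairable --- for instance by first invoking ${\bf(H2)}$ (iv) to replace $\Theta(W')$ by $\Theta(W)^{C|W-W'|,+}$ before integrating --- but as written the proof does not close.
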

	\begin{proof}
		Introducing $\pi:=\LL((Y,W),(Y',W'))$, we obtain
		$$\begin{aligned}
		&\mathbb{E}\lt[|F[f](Y,W)-F[f'](Y',W')|\rt]\cr
		&\quad =\mathbb{E}\lt[\lt|\int_{\R^{2d} \times \R^{2d}}\lt( \mb_{K(W)}(y-Y)(w-W)-\mb_{K(W')}(y'-Y')(w'-W')\rt)\pi(dy,dw,dy',dw')\rt|\rt]\cr
		&\quad \leq \mathbb{E}\lt[\lt|\int_{\R^{2d} \times \R^{2d}}\lt( \mb_{K(W)}(y-Y)(w-W)-\mb_{K(W)}(y-Y)(w'-W')\rt)\pi(dy,dw,dy',dw')\rt|\rt]\cr
		&\qquad +\mathbb{E}\lt[\lt|\int_{\R^{2d} \times \R^{2d}}\lt( \mb_{K(W)}(y-Y)-\mb_{K(W)}(y'-Y')\rt)(w'-W')\,\pi(dy,dw,dy',dw')\rt|\rt]\cr
		&\qquad + \mathbb{E}\lt[\lt|\int_{\R^{2d} \times \R^{2d}}\lt( \mb_{K(W)}(y'-Y')-\mb_{K(W')}(y'-Y')\rt)(w'-W')\,\pi(dy,dw,dy',dw')\rt|\rt]\cr
		&\quad =: I_1+I_2+I_3. 
		\end{aligned}$$	
		$\diamond$ Estimate $I_1$: First, we easily obtain
		$$
		I_1\leq \mathbb{E}\lt[|W-W'|\rt]+\int_{\R^{2d} \times \R^{2d}} |w-w'|\pi(dy,dw,dy',dw')=2\mathbb{E}\lt[|W-W'|\rt]. 
		$$
		$\diamond$ Estimate $I_2$:
		Using the fact that $f,f'$ are compactly supported in velocity together with Lemma \ref{lem_est2}, we find
		$$\begin{aligned}
		I_2&\leq 2V_{m}\mathbb{E}\lt[\int_{\R^{2d} \times \R^{2d}} \lt|\mb_{K(W)}(y-Y)-\mb_{K(W)}(y'-Y')\rt|\pi(dy,dw,dy',dw')\rt]\cr
		&\leq 2V_{m}\mathbb{E}\lt[\int_{\R^{2d} \times \R^{2d}} \lt(\mb_{\partial^{2|Y-Y'|}K(W)}(y-Y)+\mb_{\partial^{2|y-y'|}K(W)}(y-Y)\rt)\pi(dy,dw,dy',dw')\rt]\\
		&=: I_2^1+I_2^2, 
		\end{aligned}$$
		where $I_2^1$ can be estimated by
		$$\begin{aligned} 
		I_2^1&\leq 2V_{m}\mathbb{E}\lt[ \left \| f \right \|_{L^\infty}|\Theta(W)^{2|Y-Y'|,+}|\mb_{2|Y-Y'|\leq 1}\rt]+2V_{m}\mathbb{E} \lt[|Y-Y'|\mb_{2|Y-Y'|>1} \rt]\\
		&\leq C\| f \|_{L^1\cap L^\infty}\mathbb{E}\lt[|Y-Y'|\rt],
		\end{aligned}$$
		thanks to ${\bf(H2)}$ (i)-(ii). Then by using Fubini's theorem together with the fact $f=\mathcal{L}(Y,W)$, ${\bf(H2)}$ (i)-(ii) as the above, and compact support  of $f$ in velocity, we obtain
		$$\begin{aligned}
		I_2^2 &=2V_{m}\E\lt[\int_{\R^{2d} \times \R^{2d}} \mb_{\pa^{2|y-y'|}K(u)}(y-z)f(dz,du)\pi(dy,dw,dy',dw')\rt]\\
		&\leq C(1+\|f\|_{L^\infty})\E\lt[\int_{\R^{2d} \times \R^{2d}} |y-y'|\pi(dy,dw,dy',dw')\rt]\\
		&= C\|f\|_{L^1\cap L^\infty}\mathbb{E}\lt[|Y-Y'|\rt].
		\end{aligned}$$
		$\diamond$ Estimate of $I_3$: Using $\bf{(H2)}$ (iii) together with the fact that 
		\[
		\mb_{A}(x)\leq \mb_{A^{|x-y|,+}}(y) \quad \mbox{for any set $A\subset \R^d$ and $x,y\in \R^d$},
		\]
		we split $I_3$ into two terms:
		$$\begin{aligned}
		I_3&\leq 2V_{m}\E\lt[\int_{\R^{2d} \times \R^{2d}} \mb_{K(W)\Delta K(W')}(y'-Y')\pi(dy,dw,dy',dw')\rt] \\
		& \leq 2V_{m}\E\lt[\int_{\R^{2d} \times \R^{2d}} \mb_{\Theta(W)^{C|W-W'|,+}}(y'-Y')\pi(dy,dw,dy',dw')\rt]\\
		&\leq  2V_{m}\E\lt[\int_{\R^{2d} \times \R^{2d}} \mb_{\Theta(W)^{C|W-W'|+|Y-Y'|+|y-y'|,+}}(y-Y)\pi(dy,dw,dy',dw')\rt]\\
		&\leq  2V_{m}\E\lt[\int_{\R^{2d} \times \R^{2d}}\lt( \mb_{\Theta(W)^{2C|W-W'|+2|Y-Y'|,+}}(Y-y)+\mb_{\Theta(W)^{2|y-y'|,+}}(y-Y)\rt)\pi(dy,dw,dy',dw')\rt]\\
		&=: I_3^1+I_3^2.
		\end{aligned}$$	
		We first easily estimate $I_3^1$ as
		$$
		I_3^1\leq 2V_{m}\|f\|_{L^1\cap L^\infty} \E\lt[|W-W'|+|Y-Y'|\rt],
		$$
		due to $\bf{(H2)}$ (ii). For the estimate of $I_3^2$, we again use Fubini's theorem and compact support of $f$ in velocity, we have similarly as the estimate of $I_2^2$ above that
		$$\begin{aligned}
		I_3^2 & = C\E\lt[\int_{\R^{2d} \times \R^{2d}} \mb_{\Theta(u)^{2|y-y'|,+}}(y-z)f(dz,du)\pi(dy,dw,dy',dw')\rt]\\
		&\leq C\|f\|_{L^1\cap L^\infty}  \E\lt[\int_{\R^{2d} \times \R^{2d}}|y-y'|\pi(dy,dw,dy',dw')\rt]\\
		& =C\|f\|_{L^1\cap L^\infty}  \mathbb{E}\lt[|Y-Y'|\rt].
		\end{aligned}
		$$
		By combining all the above estimates, we conclude our desired result.
\end{proof} 
We now consider the following regularized nonlinear SIEs:	
\begin{equation}\label{SDE:regTr}
\left\{ \begin{array}{ll}
\displaystyle	Y^{\eta,\e}_t = Y_0+\int_0^t W^{\eta,\e}_s\,ds, & \\[3mm]
\displaystyle W^{\eta,\e}_t = W_0+ \int_0^t F^{\eta,\e}[f^{\eta,\e}_s](Y^{\eta,\e}_s,W^{\eta,\e}_s)\,ds+  \sqrt{2\sigma}\int_0^t \mathcal{R}(W^{\eta,\e}_s)\,dB_s, & \\[3mm]
f^{\eta,\e}_t=\LL(Y^{\eta,\e}_t,W^{\eta,\e}_t),& 
\end{array} \right.
\end{equation}
where $F^{\eta,\e}$ is defined as in \eqref{eq_f}. Due to the smoothness of the force fields and the diffusion coefficients, it is clear the global existence and uniqueness of solutions for the system \eqref{SDE:regTr}.

In the lemma below, we provide the upper bound estimate of the solution $W^{\eta,\e}_t$ to the above system whose proof can be obtained by using the almost same argument as in Lemma \ref{vel_bou_trun}. 
\begin{lemma}\label{vel_bou_trun2}
Let $T>0$, and suppose that there exists a solution on the time interval $[0,T]$ to the system \eqref{SDE:Tr} with the law of the initial data $f_0=\LL(Y_0,W_0)$ which is compactly supported in velocity in $\bv$. Then it holds 
\[
|W^{\eta,\e}_t|\leq V_{m},  \quad \mathbb{P} \mbox {- a.s.,}
\]
for $\eta,\e>0$ and $t\in [0,T]$.
\end{lemma}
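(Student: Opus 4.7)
I would mirror the two-step structure of Lemma \ref{vel_bou_trun}, replacing the pointwise maximum over particles $\max_{1 \le i \le N}|V^{i,\eta,\e}_t|$ by the essential supremum of $|W^{\eta,\e}_t|$ over the sample space. The key structural reason the same argument transfers is that the truncation $\mathcal{R}$ still vanishes outside $\bv$ and the alignment force is still a linear average of relative velocities against a compactly supported measure in velocity, namely $f^{\eta,\e}_s = \mathcal{L}(Y^{\eta,\e}_s, W^{\eta,\e}_s)$. Write $A^{\eta,\e}_t$ for the smallest $R \ge 0$ such that $|W^{\eta,\e}_t| \le R$ $\mathbb{P}$-a.s., equivalently the radius of the $v$-support of $f^{\eta,\e}_t$; this is now a deterministic quantity because $f^{\eta,\e}_t$ is deterministic, which is the main conceptual change from the particle setting.

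\textbf{Step A (finite growth of $A^{\eta,\e}_t$).} I introduce the random time $\tau^{\eta,\e}_t := \sup_{s \le t}\{|W^{\eta,\e}_s| = V_m\}$ and decompose $\Omega$ (modulo a null set given by path continuity) into $\Omega_A \cup \Omega_B$ exactly as in Lemma \ref{vel_bou_trun}. On $\Omega_A$ we get $|W^{\eta,\e}_t| \le V_m$ trivially. On $\Omega_B$ the truncation $\mathcal{R}(W^{\eta,\e}_s)$ vanishes on $[\tau^{\eta,\e}_t,t]$, so the stochastic integral over this interval is zero and I control $|W^{\eta,\e}_t|$ by $V_m$ plus the drift integral. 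Using that $f^{\eta,\e}_s$ is $v$-supported in $\mathbb{B}_{A^{\eta,\e}_s}$, the drift is bounded by
\[
|F^{\eta,\e}[f^{\eta,\e}_s](Y^{\eta,\e}_s,W^{\eta,\e}_s)| \le \int_{\R^d \times \mathbb{B}_{A^{\eta,\e}_s}} (|w| + |W^{\eta,\e}_s|)\, f^{\eta,\e}_s(dy,dw) \le A^{\eta,\e}_s + |W^{\eta,\e}_s|.
\]
Combining the two alternatives gives $|W^{\eta,\e}_t| \le V_m + \int_0^t (A^{\eta,\e}_s + |W^{\eta,\e}_s|)\, ds$ $\mathbb{P}$-a.s., and taking essential supremum yields $A^{\eta,\e}_t \le V_m e^t(1+t) \le V_m e^T(1+T)$ by Gronwall. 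In particular $A^{\eta,\e}_{\cdot}$ is finite and $f^{\eta,\e}_s$ has compact $v$-support uniformly in $s \in [0,T]$.

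\textbf{Step B (no escape above $V_m$).} I argue by contradiction, assuming $\mathbb{P}(\sup_{t \in [0,T]}|W^{\eta,\e}_t| > V_m) > 0$. Pick $\omega$ in that event; by continuity of paths the supremum is attained at some $t_0(\omega)$, and a whole neighborhood $V_\omega$ of $t_0$ satisfies $|W^{\eta,\e}_s(\omega)| > V_m$. On $V_\omega$ the noise term vanishes so $s \mapsto W^{\eta,\e}_s(\omega)$ is absolutely continuous, and, using $|w| \le A^{\eta,\e}_s \le \widetilde{A} := \sup_{s \in [0,T]} A^{\eta,\e}_s$ for $w$ in the $v$-support of $f^{\eta,\e}_s$, I compute
\[
\tfrac{1}{2}\tfrac{d}{ds}|W^{\eta,\e}_s(\omega)|^2 \le (\widetilde{A} - |W^{\eta,\e}_s(\omega)|)|W^{\eta,\e}_s(\omega)|,
\]
precisely as in the particle proof. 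Defining $Z_s := (\widetilde{A} - |W^{\eta,\e}_s(\omega)|)^2/2$ and using the differential inequality $|dZ_s/ds| \le 2 Z_s$ together with $Z_{t_0} = 0$, a Gronwall argument propagates $Z_s \equiv 0$, and hence $|W^{\eta,\e}_s(\omega)| > V_m$, to all of $[0,T]$. This contradicts $|W^{\eta,\e}_0| \le V_m$ $\mathbb{P}$-a.s., which holds because the initial law $f_0$ is $v$-supported in $\bv$.

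\textbf{Main obstacle.} The delicate point is the interplay between the deterministic object $A^{\eta,\e}_s$ (the $v$-support radius of the law) and the pathwise quantity $|W^{\eta,\e}_s(\omega)|$: in the particle case both were simultaneously pathwise, while here I must pass through the essential supremum at the Gronwall step, and in Step B I must use $\widetilde{A}$ as a deterministic global bound on the support while doing a pathwise ODE analysis on the excursion above $V_m$. Once this bookkeeping is in place, the rest is a direct translation of the argument for Lemma \ref{vel_bou_trun}.
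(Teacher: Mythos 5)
Your overall strategy---transplanting the two-step argument of Lemma \ref{vel_bou_trun}, with the pathwise maximum over particles replaced by the essential supremum $A^{\eta,\e}_t$ of $|W^{\eta,\e}_t|$---is exactly what the paper intends (it gives no separate proof and simply invokes the same argument as Lemma \ref{vel_bou_trun}), and your Step A goes through as described. However, Step B as written has a genuine gap, and it sits precisely at the point you yourself single out as delicate. You take $\widetilde{A} := \sup_{s\in[0,T]} A^{\eta,\e}_s$, a \emph{deterministic} quantity, and then assert $Z_{t_0} = (\widetilde{A} - |W^{\eta,\e}_{t_0}(\omega)|)^2/2 = 0$ for the chosen path $\omega$. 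This requires $|W^{\eta,\e}_{t_0}(\omega)| = \widetilde{A}$, i.e., that the essential supremum over the sample space is attained along that particular path; an essential supremum need not be attained at any single $\omega$, so this step fails. In the particle case the issue does not arise because $\widetilde{A}(\omega) = \sup_t \max_i |V^{i,\eta,\e}_t(\omega)|$ is itself pathwise and is attained at some pair $(i,t_0)$ for the very $\omega$ used in the ODE analysis. If instead you define $\widetilde{A}$ as the pathwise supremum $\sup_t |W^{\eta,\e}_t(\omega)|$, then $Z_{t_0}=0$ holds but the drift inequality $\tfrac12\tfrac{d}{ds}|W^{\eta,\e}_s|^2 \le (\widetilde{A}-|W^{\eta,\e}_s|)|W^{\eta,\e}_s|$ breaks, since the velocity support of the deterministic law $f^{\eta,\e}_s$ may exceed the maximum of that one path. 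Either choice invalidates one half of the argument.

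The gap is repairable by approximating the essential supremum rather than attaining it: assuming $\widetilde{A} > V_m$, for small $\delta>0$ there is a positive-probability set of paths with $\sup_{t}|W^{\eta,\e}_t(\omega)| > \widetilde{A}-\delta$, so that $Z_{t_0}(\omega)\le \delta^2/2$ at the pathwise maximizer; the inequality $|dZ_s/ds|\le 2Z_s$ then propagates $Z_s \le (\delta^2/2)e^{2T}$ along the excursion, hence $|W^{\eta,\e}_s(\omega)| \ge \widetilde{A}-\delta e^{T} > V_m$ throughout the maximal excursion interval once $\delta$ is small, forcing that interval to be all of $[0,T]$ and contradicting $|W^{\eta,\e}_0|\le V_m$ a.s.\ on a set of positive probability. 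You should also justify why $A^{\eta,\e}_t$ is finite in the first place, since the Gronwall step in your Step A presupposes it; for finitely many particles this is automatic, whereas here it must come from, e.g., propagation of compact velocity support in the construction of solutions to \eqref{SDE:regTr}.
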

In the proposition below, we show the existence of weak solutions to the corresponding Vlasov-Fokker-Planck type equation to \eqref{SDE:regTr}.
\begin{proposition}\label{prop_ext}The family of time marginals of the solution $(f^{\eta,\e}_t)$ to the system \eqref{SDE:regTr} is a global-in-time weak solution of the following kinetic equation:
\bq\label{reg_vla}
\pa_t f^{\eta,\e}_t + v \cdot \nabla_x  f^{\eta,\e}_t  + \nabla_v \cdot \lt(F^{\eta,\e}[ f^{\eta,\e}_t ] f^{\eta,\e}_t  \rt) = \sigma\Delta_v (\mathcal{R}^2(v)  f^{\eta,\e}_t), \quad (x,v) \in \R^d \times \R^d,
\eq
with the initial data $f_0 = \mathcal{L}(Y_0,W_0)\in L^\infty(\R^d \times \R^d)$ compactly supported in velocity in $\bv$, where the velocity alignment force $F^{\eta,\e}[ f^{\eta,\e}_t ]$ is given by
\[
F^{\eta,\e}[ f^{\eta,\e}_t ](x,v) = \int_{\R^d \times \R^d} \mb^{\eta,\e}_{K(v)}(x-y)(w-v) f^{\eta,\e}_t(dy,dw).
\]
Furthermore, we have
\[
\sup_{0 \leq t \leq T}\|f^{\eta,\e}_t\|_{L^\infty} \leq C.
\]
for some $T > 0$ and positive constant $C>0$ independent of $\eta,\e$.
\end{proposition}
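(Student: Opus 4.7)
I plan to handle the two claims separately. For the weak form of \eqref{reg_vla}, my plan is to apply It\^o's formula to $\varphi(Y_t^{\eta,\e}, W_t^{\eta,\e})$ for an arbitrary $\varphi \in \mc^\infty_c(\R^{2d})$. Because $F^{\eta,\e}[f_s^{\eta,\e}]$ is a smooth (deterministic in $(x,v)$) function for fixed $\eta,\e>0$ and $\mathcal{R}\in\mc^2$, this produces
\[
d\varphi(Y_t^{\eta,\e},W_t^{\eta,\e}) = \bigl[v\cdot\nabla_x\varphi + F^{\eta,\e}[f_t^{\eta,\e}]\cdot\nabla_v\varphi + \sigma\mathcal{R}^2(v)\Delta_v\varphi\bigr]\bigr|_{(Y_t^{\eta,\e},W_t^{\eta,\e})}\,dt + \sqrt{2\sigma}\,\mathcal{R}(W_t^{\eta,\e})\nabla_v\varphi\cdot dB_t.
\]
Taking expectations kills the martingale part, and using $f_t^{\eta,\e}=\LL(Y_t^{\eta,\e},W_t^{\eta,\e})$ allows me to rewrite the remaining expectations as integrals against $f_t^{\eta,\e}$, giving precisely the weak formulation of \eqref{reg_vla}.

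For the uniform $L^\infty$ bound, I plan an $L^p$ energy estimate followed by the limit $p\to\infty$. Because the coefficients are smooth for fixed $\eta,\e>0$, one obtains a smooth classical solution $f_t^{\eta,\e}$ from standard parabolic theory (alternatively, one can work with a further mollification and pass to the limit). Multiplying \eqref{reg_vla} by $p(f_t^{\eta,\e})^{p-1}$ and integrating, the $v$-transport term vanishes after integration in $x$, and two integrations by parts on the diffusion yield
\[
\frac{d}{dt}\|f_t^{\eta,\e}\|_{L^p}^p = (p-1)\int(f_t^{\eta,\e})^p\bigl[\sigma\Delta_v\mathcal{R}^2 - \nabla_v\cdot F^{\eta,\e}[f_t^{\eta,\e}]\bigr] - p\sigma(p-1)\int\mathcal{R}^2(f_t^{\eta,\e})^{p-2}|\nabla_v f_t^{\eta,\e}|^2.
\]
The $\sigma\Delta_v\mathcal{R}^2$ contribution is uniformly bounded, and the last term is non-positive, available as dissipation.

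The main obstacle will be the $\nabla_v\cdot F^{\eta,\e}$ piece, whose pointwise bound naively scales as $\eta^{-1}$ through the velocity mollifier in $\mb^{\eta,\e}_{K(v)}$. My plan to bypass this is to integrate by parts in $v$, turning it into $\int F^{\eta,\e}\cdot\nabla_v(f_t^{\eta,\e})^p$, and then apply a weighted Young inequality that distributes a factor of $\mathcal{R}$ so that the half containing the gradient of $f^{\eta,\e}$ is absorbed by the dissipation term, while the remainder takes the form $C\|F^{\eta,\e}\|_\infty^2\int(f_t^{\eta,\e})^p/\mathcal{R}^2$. The uniform bound $\|F^{\eta,\e}\|_\infty\le 2V_m$ from Lemma \ref{vel_bou_trun2} handles the prefactor, and the weight $1/\mathcal{R}^2$ is controlled on the effective velocity support of $f_t^{\eta,\e}$ for short times $[0,T]$ (whose length depends only on $f_0$, $V_m$, and $\sigma$). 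Gronwall's inequality then closes the $L^p$ estimate with a growth rate independent of $p$ and of $\eta,\e$, and letting $p\to\infty$ delivers the desired uniform $L^\infty$ bound.
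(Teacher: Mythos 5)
The first half of your proposal (It\^o's formula applied to a test function, expectation, identification with the weak form of \eqref{reg_vla}) is exactly the paper's argument and is fine.

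The second half has a genuine gap, and it stems from a misdiagnosis of where the difficulty lies. You treat $\nabla_v\cdot F^{\eta,\e}[f^{\eta,\e}_t]$ as an object of size $\eta^{-1}$ that must be avoided, and your workaround is to integrate by parts and absorb the gradient via a weighted Young inequality, leaving a term of the form $C\int |F^{\eta,\e}|^2 (f_t^{\eta,\e})^p/\mathcal{R}^2$. The problem is the weight $1/\mathcal{R}^2$: by hypothesis $\mathcal{R}$ is compactly supported in the open ball $\bv$, so it vanishes continuously as $|v|\to V_m$, and Lemma \ref{vel_bou_trun2} only gives $|W_t^{\eta,\e}|\le V_m$ --- it does not keep the velocity support away from the set where $\mathcal{R}$ degenerates. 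Since the diffusion is nondegenerate on any compact subset of $\{\mathcal{R}>0\}$, the law $f_t^{\eta,\e}$ spreads for any $t>0$ into regions where $\mathcal{R}$ is arbitrarily small; there is no ``effective velocity support'' on which $1/\mathcal{R}^2$ is bounded, for short times or otherwise. So the remainder term in your Young inequality is not controlled by $\|f_t^{\eta,\e}\|_{L^p}^p$, and Gronwall does not close.

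The fix --- and the route the paper takes --- is that no bypass is needed: $\|\nabla_v\cdot F^{\eta,\e}[f]\|_{L^\infty}\le C(1+\|f\|_{L^\infty})$ with $C$ independent of $\eta,\e,p$ (this is \cite[Proposition 4.1]{CCHS} combined with Lemma \ref{vel_bou_trun2}). The point is that although $\nabla_v\mb^{\eta,\e}_{K(v)}(y-x)$ is pointwise of order $\eta^{-1}$, assumption ${\bf(H2)}$ confines it (in $y$) to an $O(\eta+\e)$-neighborhood of $\Theta(v)$, so integrating against a bounded density yields an $O(1)$ bound. With this, $\frac{d}{dt}\|f_t^{\eta,\e}\|_{L^p}^p\le Cp(1+\|f_t^{\eta,\e}\|_{L^\infty})\|f_t^{\eta,\e}\|_{L^p}^p$, one sends $p\to\infty$, and the resulting Riccati inequality $g'\le Cg(1+g)$ is what produces the short time $T$ in the statement. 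You should replace your weighted-Young step by this uniform divergence bound; the rest of your $L^p$ computation (the dissipation term being nonpositive, the $\Delta_v\mathcal{R}^2$ term being harmless) is consistent with the paper.
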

\begin{proof} For $\phi \in \mc_c^\infty(\R^{2d})$, we apply It\^o's formula to the system \eqref{SDE:regTr} to find
$$\begin{aligned}
&\phi(Y_t^{\eta,\e},W_t^{\eta,\e})\cr
&\quad =\phi(Y_0,W_0)+\int_0^t \left \langle  \nabla_{x,v}\phi(Y_s^{\eta,\e},W_s^{\eta,\e}),d(Y_s^{\eta,\e},W_s^{\eta,\e}) \right \rangle+\frac{1}{2}\int_0^t\Delta_{x,v}\phi(Y_s^{\eta,\e},W_s^{\eta,\e})d\left \langle (Y_s^{\eta,\e},W_s^{\eta,\e}) \right \rangle\\
&\quad=\phi(Y_0,W_0)+\int_0^t W_s^{\eta,\e}\cdot\nabla_x \phi(Y_s^{\eta,\e},W_s^{\eta,\e})ds+\int_0^tF^{\eta,\e}[f_s^{\eta,\e}](Y_s^{\eta,\e},W_s^{\eta,\e})\cdot\nabla_v\phi(Y_s^{\eta,\e},W_s^{\eta,\e})ds\\
&\qquad +\sqrt{2\sigma}\int_0^t \left \langle \nabla_v \phi(Y_s^{\eta,\e},W_s^{\eta,\e}),\mathcal{R}(W^{\eta,\e}_s)dB_s \right \rangle +\sigma \int_0^t \Delta_v\phi(Y_s^{\eta,\e},W_s^{\eta,\e})\mathcal{R}^2(W^{\eta,\e}_s)\,ds.
\end{aligned}$$
We then take the expectation and use the fact that $f^{\eta,\e}_t =\mathcal{L}(Y^{\eta,\e}_t,W_t^{\eta,\e})$ to obtain
$$\begin{aligned}
&\int_{\R^d\times \R^d}\phi(x,v)f_t^{\eta,\e}(dx,dv)\cr
&=\int_{\R^d\times \R^d}\phi(x,v)f_0(dx,dv)+\int_0^t \int_{\R^d\times \R^d} v\cdot\nabla_x \phi(x,v)f^{\eta,\e}_s(dx,dv)\,ds\\
&\quad \,\,+\int_0^t\int_{\R^d\times \R^d} F^{\eta,\e}[f_s^{\eta,\e}](x,v)\cdot\nabla_v\phi(x,v)f^{\eta,\e}_s(dx,dv)\,ds\cr
&\quad \,\,+\sigma\int_0^t \int_{\R^d\times \R^d} \Delta_v\phi(x,v) \mathcal{R}^2(v)f^{\eta,\e}_s(dx,dv)\,ds.
\end{aligned}$$
This concludes that the family of time marginals of the process solutions $(Y^{\eta,\e}_t,W_t^{\eta,\e})$ to the system \eqref{SDE:regTr} is a weak solution for the kinetic equation \eqref{reg_vla}. 

We next show the uniform bound estimate of $\|f^{\eta,\e}\|_{L^\infty}$ in $\eta,\e > 0$. Due to the diffusion term, we first estimate $L^p$-norm of the solution $f^{\eta,\e}$ and send $p \to \infty$ to have the desired $L^\infty$-estimate. For $p \geq 1$, we obtain
$$\begin{aligned}
&\frac{d}{dt} \int_{\R^d\times \R^d} (f_t^{\eta,\e})^p\,dxdv	\cr
&\quad = p\int_{\R^d\times \R^d} \partial_t f^{\eta,\e}_t(f_t^{\eta,\e})^{p-1}dxdv\\
&\quad =- p\int_{\R^d\times \R^d} \nabla_v \cdot (F^{\eta,\e}[f_t^{\eta,\e}]f_t^{\eta,\e})(f_t^{\eta,\e})^{p-1}dxdv+\sigma p\int_{\R^d\times \R^d}\Delta_v (\mathcal{R}^2(v) f_t^{\eta,\e})(f_t^{\eta,\e})^{p-1}dxdv\\
&\quad =: J_1 + J_2,
\end{aligned}$$
where $J_i,i=1,2$ are estimated as follows.
$$\begin{aligned}
J_1 &= - p \int_{\R^d \times \R^d} \lt(\nabla_v \cdot F^{\eta,\e}[f_t^{\eta,\e}] \rt)(f_t^{\eta,\e})^p\,dxdv - \int_{\R^d \times \R^d} F^{\eta,\e}[f_t^{\eta,\e}] \cdot\nabla_v (f_t^{\eta,\e})^p\,dxdv\cr
&= - (p-1) \int_{\R^d \times \R^d} \lt(\nabla_v \cdot F^{\eta,\e}[f_t^{\eta,\e}] \rt)(f_t^{\eta,\e})^p\,dxdv \cr
J_2 &= -\sigma p \int_{\R^d \times \R^d} \lt(\nabla_v (\mathcal{R}^2(v))f^{\eta,\e}_t + \mathcal{R}^2(v) \nabla_v f^{\eta,\e}_t \rt) \cdot \nabla_v ((f^{\eta,\e}_t)^{p-1})\,dxdv\cr
&=\sigma (p-1)\int_{\R^d \times \R^d} \Delta_v (\mathcal{R}^2(v))(f^{\eta,\e}_t)^p\,dxdv - \sigma p(p-1)\int_{\R^d \times \R^d} \mathcal{R}^2(v) (f^{\eta,\e}_t)^{p-2}|\nabla_v f^{\eta,\e}_t|^2\,dxdv.
\end{aligned}$$
This yields
$$\begin{aligned}
\frac{d}{dt}\|f^{\eta,\e}_t\|_{L^p}^p \leq (p-1)\lt(\|\nabla_v \cdot F^{\eta,\e}[f_t^{\eta,\e}]\|_{L^\infty} + \sigma \|\Delta_v \mathcal{R}^2(v)\|_{L^\infty}\rt)\|f^{\eta,\e}_t\|_{L^p}^p.
\end{aligned}$$
On the other hand, it follows from \cite[Proposition 4.1]{CCHS} together with Lemma \ref{vel_bou_trun2} that
\[
\|\nabla_v \cdot F^{\eta,\e}[f_t^{\eta,\e}]\|_{L^\infty} \leq C(1 + \|f^{\eta,\e}_t\|_{L^\infty}),
\]
where $C > 0$ is independent of $\eta$, $\e$, and $p$. Thus we obtain
\[
\frac{d}{dt}\left\|f_t^{\eta,\e} \right\|^p_{L^p} \leq Cp(1+\|f_t^{\eta,\e}\|_{L^\infty} )\left\|f_t^{\eta,\e} \right\|^p_{L^p}.
\]
By applying Gronwall's inequality, we get
\[ 
\|f_t^{\eta,\e}\|_{L^p} \leq \|f_0\|_{L^p} \exp\lt( C\int_0^t (1 + \|f_s^{\eta,\e}\|_{L^\infty})ds\rt).
\]
We now send $p \to \infty$ to find
\[
\|f_t^{\eta,\e}\|_{L^\infty} \leq \|f_0\|_{L^\infty} \exp\lt( C\int_0^t (1 + \|f_s^{\eta,\e}\|_{L^\infty})\,ds\rt).
\]
Set 
\[
g(t):= \|f_0\|_{L^\infty} \exp\lt( C\int_0^t (1 + \|f_s^{\eta,\e}\|_{L^\infty})ds\rt) \geq 0.
\]
Then $g(0) = \|f_0\|_{L^\infty}$ and $g$ satisfies 
\[
g'(t) = Cg(t)(1 + \|f_t^{\eta,\e}\|_{L^\infty}) \leq Cg(t)(1 + g(t)).
\]
This yields
\[
(g^{-1}(t))' + Cg^{-1}(t) \geq -C \quad \mbox{and} \quad g(t) \leq \frac{1}{(1 + g_0^{-1})e^{-Ct} - 1}.
\]
Hence we have
\[
\|f_t^{\eta,\e}\|_{L^\infty} \leq g(t) \leq \frac{1}{(1 + g_0^{-1})e^{-Ct} - 1} =  \frac{\|f_0\|_{L^\infty}}{(1 + \|f_0\|_{L^\infty})e^{-Ct} - \|f_0\|_{L^\infty}}.
\]
\end{proof}
	
We then provide the existence and stability of strong solutions to the nonlinear SIEs \eqref{SDE:Tr} in the proposition below.
\begin{proposition}\label{prop:exNLSDER}There exists only one process on the time interval $[0,T]$ for some $T>0$ solving the system \eqref{SDE:Tr} in the strong sense starting from $(Y_0,W_0)$ with law $f_0\in L^{\infty}(\R^{2d})$, independent of $(B_t)_{t\in[0,T]}$ such that its time marginal satisfies $f_t \in L^\infty(0,T;L^\infty(\R^{2d}))$ and solves the kinetic equation \eqref{sys_kin_Trun}. Moreover if $(Y_t,W_t)_{t \in [0,T]}$ and $(Y'_t,W'_t)_{t \in [0,T]}$ are two solutions to the system \eqref{SDE:Tr} with the initial data $(Y_0, W_0)$ and $(Y'_0, W'_0)$, respectively, such that $f_t=\LL(Y_t,W_t)$ with $f_t \in L^\infty(0,T;L^\infty(\R^{2d}))$. Then we have
\[
\mathbb{E}\bigl[|Y_t-Y'_t|+|W_t-W'_t|\bigr]\leq e^{C\int_0^t \|f_s\|_{L^1\cap L^\infty} ds}\mathbb{E}\bigl[|Y_0-Y'_0|+|W_0-W'_0|\bigr],
\]
for $t \in [0,T]$.
\end{proposition}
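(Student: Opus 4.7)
The plan is to build the solution by passing to the limit in the regularized nonlinear SIEs \eqref{SDE:regTr}, for which strong existence and uniqueness is classical, and then to establish the stability estimate (and in particular pathwise uniqueness) by a direct It\^o computation against the weak-strong stability of Lemma \ref{lem:roparg}. Throughout, the time $T>0$ is the one given by Proposition \ref{prop_ext}, which provides a uniform-in-$(\eta,\e)$ bound $\sup_{t\in[0,T]}\|f_t^{\eta,\e}\|_{L^\infty}\le C$, while Lemma \ref{vel_bou_trun2} keeps the velocity process almost surely inside $\bv$.

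\textbf{Cauchy property of the regularized approximations.} I would first show that $(Y^{\eta,\e},W^{\eta,\e})$ is Cauchy as $\eta,\e\to 0$ by applying It\^o's formula to the smooth quantity $\varphi_\gamma(y,w):=\sqrt{\gamma^{2}+|y|^{2}+|w|^{2}}$ evaluated at the difference $(Y^{\eta,\e}_{t}-Y^{\eta',\e'}_{t},\,W^{\eta,\e}_{t}-W^{\eta',\e'}_{t})$, driven by the same Brownian motion and initial data. Note that $\varphi_\gamma$ has gradient bounded by $1$ and Hessian of the order $1/\varphi_\gamma$. The drift difference
\[
F^{\eta,\e}[f^{\eta,\e}_{s}](Y^{\eta,\e}_{s},W^{\eta,\e}_{s})-F^{\eta',\e'}[f^{\eta',\e'}_{s}](Y^{\eta',\e'}_{s},W^{\eta',\e'}_{s})
\]
splits into a \emph{stability piece}, where the random variables differ but the law and the mollifier are held fixed, and a \emph{consistency piece} of the form $F^{\eta,\e}[g]-F^{\eta',\e'}[g]$ for a fixed density $g$. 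The first piece is bounded by a regularized analog of Lemma \ref{lem:roparg}, yielding an estimate of order $C\|f^{\eta,\e}_{s}\|_{L^{1}\cap L^{\infty}}\E[|Y^{\eta,\e}_{s}-Y^{\eta',\e'}_{s}|+|W^{\eta,\e}_{s}-W^{\eta',\e'}_{s}|]$; the second piece is $o(1)$ as $\eta,\e,\eta',\e'\to 0$ thanks to the mollifier estimates \eqref{lem_diff1}--\eqref{lem_add}. The It\^o correction in the martingale part is controlled by the Lipschitz continuity of $\mathcal{R}$ (which follows from $\mathcal{R}\in\mc^{2}$ with compact support), namely $\sigma\,\mathrm{Tr}(D^{2}_{w}\varphi_\gamma)\,|\mathcal{R}(W^{\eta,\e}_{s})-\mathcal{R}(W^{\eta',\e'}_{s})|^{2}\le C|W^{\eta,\e}_{s}-W^{\eta',\e'}_{s}|^{2}/\varphi_\gamma\le C\varphi_\gamma$. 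Taking expectations, applying Gronwall's inequality and sending $\gamma\to 0$ yields the desired Cauchy property in $L^{1}(\Omega)$ on $[0,T]$.

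\textbf{Passage to the limit and stability.} The Cauchy step produces a limit process $(Y,W)$ on the original stochastic basis; the time marginals $f^{\eta,\e}_{t}$ converge weakly-$\ast$ in $L^{\infty}$ to $f_{t}=\LL(Y_{t},W_{t})\in L^{\infty}(0,T;L^{\infty}(\R^{2d}))$, and one passes to the limit in the weak formulation of \eqref{reg_vla} using the uniform $L^{\infty}$ bound together with \eqref{lem_diff1}--\eqref{lem_add} to identify $f$ as a solution to \eqref{sys_kin_Trun}; the limiting process $(Y,W)$ consequently solves \eqref{SDE:Tr}. For the stability estimate, I repeat the It\^o computation on $\varphi_\gamma(Y_{t}-Y'_{t},W_{t}-W'_{t})$ for two solutions with laws $f_{t},f'_{t}$: only the stability piece survives, and Lemma \ref{lem:roparg} applies directly using the $L^{\infty}$ bound on $f_{t}$, while the martingale correction is again controlled by Lipschitz continuity of $\mathcal{R}$. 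A Gronwall argument followed by $\gamma\to 0$ produces the claimed estimate; uniqueness is then the special case $(Y_{0},W_{0})=(Y'_{0},W'_{0})$.

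\textbf{Main obstacle.} The main technical difficulty is the Cauchy step: one needs both the regularized weak-strong stability uniformly in $(\eta,\e)$, and the smallness of the cross-mollifier error for densities only bounded in $L^\infty$. Both pieces ultimately rest on \textbf{(H2)}: the rope argument of Lemma \ref{lem_est2} transfers to mollified indicators because $\Theta(v)^{\e+\eta,+}$ still has Lebesgue measure $O(\e+\eta)$ by \textbf{(H2)}(ii), while the mollifier comparison reduces to $L^{1}$ estimates of the form $\|\mathbf{1}^{\eta,\e}_{K(v)}-\mathbf{1}^{\eta',\e'}_{K(v)}\|_{L^{1}}$ controlled by \eqref{lem_diff1} and \eqref{lem_add}.
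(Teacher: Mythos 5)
Your proposal is correct and follows essentially the same route as the paper: regularize via \eqref{SDE:regTr}, prove a Cauchy estimate by applying It\^o's formula to a $\sqrt{\gamma^2+|\cdot|^2}$-regularization of the distance, split the drift difference into a mollifier-consistency part (controlled by \eqref{lem_diff1}--\eqref{lem_add}) and a weak-strong stability part (Lemma \ref{lem:roparg}), control the It\^o correction by the Lipschitz bound on $\mathcal{R}$, then Gronwall, $\gamma\to0$, passage to the limit, and the same computation for stability. The only ingredient you skip is the paper's Step B, where Doob's inequality upgrades the bound to $\E\bigl[\sup_{t\in[0,T]}(\cdot)\bigr]$ so that the limit is obtained as a genuine process uniformly on $[0,T]$; this is a standard refinement of your Cauchy step rather than a different idea.
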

\begin{proof} As mentioned in Introduction, we cannot directly use the $1$-Wasserstein distance. Set $\varphi_\gamma(x) := \sqrt{\gamma^2 + |x|^2}$ with $\gamma > 0$. Note that
\[
\nabla \varphi_\gamma(x) = \frac{x}{\sqrt{\gamma^2 + |x|^2}} \quad \mbox{and} \quad \Delta \varphi_\gamma(x) = \frac{d\gamma^2 + (d-1)|x|^2}{(\gamma^2 + |x|^2)^{3/2}}.
\]
We also find from the above that
\bq\label{est_sim}
|\nabla \varphi_\gamma(x)| \leq 1 \quad \mbox{and} \quad |\Delta \varphi_\gamma(x)| \leq \frac{d}{\sqrt{\gamma^2 + |x|^2}}.
\eq
$\diamond$ {\bf Step A (Cauchy estimates).-} Let $(Y^{\eta,\e}_t, W^{\eta,\e}_t)$ and $(Y^{\eta',\e'}_t, W^{\eta',\e'}_t)$ be solutions to the regularized system \eqref{SDE:regTr}. Then, by applying It\^o's formula, we obtain
\begin{align}\label{est_cau}
\begin{aligned}
&\sqrt{\gamma^2 + |W^{\eta,\e}_t - W^{\eta',\e'}_t|^2}\cr
&\quad = \int_0^t \lt(\frac{W^{\eta,\e}_s - W^{\eta',\e'}_s}{\sqrt{\gamma^2 + |W^{\eta,\e}_s - W^{\eta',\e'}_s|^2}} \rt)\cdot\lt(F^{\eta,\e}[f^{\eta,\e}_s](Y^{\eta,\e}_s,W^{\eta,\e}_s) - F^{\eta',\e'}[f^{\eta',\e'}_s](Y^{\eta',\e'}_s,W^{\eta',\e'}_s) \rt)ds\cr
&\qquad + \sqrt{2\sigma}\int_0^t  \left(\mathcal{R}(W^{\eta,\e}_s)-\mathcal{R}(W^{\eta',\e'}_s)\right)\lt(\frac{W^{\eta,\e}_s - W^{\eta',\e'}_s}{\sqrt{\gamma^2 + |W^{\eta,\e}_s - W^{\eta',\e'}_s|^2}} \rt) \cdot dB_s\cr
&\qquad + \sigma\int_0^t \lt(\frac{d\gamma^2 + (d-1)|W^{\eta,\e}_s - W^{\eta',\e'}_s|^2}{(\gamma^2 + |W^{\eta,\e}_s - W^{\eta',\e'}_s|^2)^{3/2}} \rt)\left(\mathcal{R}(W^{\eta,\e}_s)-\mathcal{R}(W^{\eta',\e'}_s)\right)^2\,ds,
\end{aligned}
\end{align}
due to $W^{\eta,\e}_0 = W^{\eta',\e'}_0 = W_0$. We next take the expectation to the above equality to find 
$$\begin{aligned}
&\E\lt[\sqrt{\gamma^2 + |W^{\eta,\e}_t - W^{\eta',\e'}_t|^2} \rt]\cr
&\quad \leq \int_0^t \E\lt[\lt|F^{\eta,\e}[f^{\eta,\e}_s](Y^{\eta,\e}_s,W^{\eta,\e}_s) - F^{\eta',\e'}[f^{\eta',\e'}_s](Y^{\eta',\e'}_s,W^{\eta',\e'}_s)\rt| \rt]ds\cr
&\qquad + \sigma d\int_0^t \E\lt[\frac{1}{\sqrt{\gamma^2 + |W^{\eta,\e}_s - W^{\eta',\e'}_s|^2}}\left(\mathcal{R}(W^{\eta,\e}_s)-\mathcal{R}(W^{\eta',\e'}_s)\right)^2 \rt]ds\cr
&\quad =: \int_0^t K_1 \,ds + \int_0^t K_2 \,ds,
\end{aligned}$$
where we used \eqref{est_sim}. Here $K_2$ can be easily estimated as
$$\begin{aligned}
\int_0^t K_2 \,ds &\leq \sigma d\|\mathcal{R}\|_{Lip}^2\int_0^t \E\lt[\frac{|W^{\eta,\e}_s - W^{\eta',\e'}_s|^2}{\sqrt{\gamma^2 + |W^{\eta,\e}_s - W^{\eta',\e'}_s|^2}} \rt]ds \cr
&\leq \sigma d\|\mathcal{R}\|_{Lip}^2\int_0^t \lt(\gamma + \E[|W^{\eta,\e}_s - W^{\eta',\e'}_s|]\rt)ds.
\end{aligned}$$
For the estimate of $K_1$, we decompose it as
$$\begin{aligned}
K_1 &\leq \mathbb{E}\left[\left|F^{\eta,\e}[f^{\eta,\e}_s](Y_s^{\eta,\e},W_s^{\eta,\e})-F[f^{\eta,\e}_s](Y_s^{\eta,\e},W_s^{\eta,\e})\right|\right]\cr
&\quad + \mathbb{E}\left[\left|F[f^{\eta,\e}_s](Y_s^{\eta,\e},W_s^{\eta,\e})-F[f^{\eta',\e'}_s](Y_s^{\eta',\e'},W_s^{\eta',\e'})\right|\right]\cr
&\quad + \mathbb{E}\left[\left|F[f^{\eta',\e'}_s](Y_s^{\eta',\e'},W_s^{\eta',\e'})-F^{\eta',\e'}[f^{\eta',\e'}_s](Y_s^{\eta',\e'},W_s^{\eta',\e'})\right|\right]\cr
&=: K_1^1 + K_1^2 + K_1^3.
\end{aligned}$$
$\diamond$ Estimates of $K_1^1$ and $K_1^3$: We again split $K_1^1$ into two terms:
$$\begin{aligned}
K_1^1&=\mathbb{E}\lt[\lt|\int_{\R^d \times \R^d}\lt(\mb^{\eta,\e}_{K(W^{\eta,\e}_s)}(Y_s^{\eta,\e}-y)-\mb_{K(W^{\eta,\e}_s)}(Y_s^{\eta,\e}-y)\rt)(w-W_s^{\eta,\e}) f_s^{\eta,\e}(dy,dw) \rt|\rt]\\
&\leq \mathbb{E}\lt[\int_{\R^d \times \R^d}\lt|\mb^{\eta,\e}_{K(W^{\eta,\e}_s)}(Y_s^{\eta,\e}-y)-\mb^{\e}_{K(W^{\eta,\e}_s)}(Y_s^{\eta,\e}-y)\rt||w-W_s^{\eta,\e}|f_s^{\eta,\e}(dy,dw) \rt]\\
& \quad + \mathbb{E}\lt[\int_{\R^d \times \R^d}\lt|\mb^{\e}_{K(W^{\eta,\e}_s)}(Y_s^{\eta,\e}-y)-\mb_{K(W^{\eta,\e}_s)}(Y_s^{\eta,\e}-y)\rt||w-W_s^{\eta,\e}|f_s^{\eta,\e}(dy,dw) \rt]\\
&=: K_1^{1,1}+K_1^{1,2}.
\end{aligned}$$			
Using the estimate of compact support of $f_s^{\eta,\e}$ in velocity and \eqref{lem_add},  we find for $\eta \leq 1$
$$\begin{aligned}
K_1^{1,1} &\leq 2V_m|\bv| \|f_s^{\eta,\e}\|_{L^\infty}\mathbb{E}\lt[\int_{\R^d}\lt|\mb^{\eta,\e}_{K(W^{\eta,\e}_s)}(Y_s^{\eta,\e}-y)-\mb^{\e}_{K(W^{\eta,\e}_s)}(Y_s^{\eta,\e}-y)\rt|dy\rt]\\
&\leq C\|f_s^{\eta,\e}\|_{L^\infty}\eta.
\end{aligned}$$
Similarly, by using \eqref{lem_diff1} and $\bf{(H2)}$ (i)-(ii), we get for $\e<1/2$
$$\begin{aligned}
K_1^{1,2} &\leq 2V_{m}|\bv| \|f_s^{\eta,\e}\|_{L^\infty}\mathbb{E}\lt[\int_{\R^d}\lt|\mb^{\e}_{K(W^{\eta,\e}_s)}(Y_s^{\eta,\e}-y)-\mb_{K(W^{\eta,\e}_s)}(Y_s^{\eta,\e}-y)\rt|dy\rt]\\
&\leq C\|f_s^{\eta,\e}\|_{L^\infty}\sup_{v\in \R^d}|\Theta(v)^{2\e,+}|\cr
&\leq  C\|f_s^{\eta,\e}\|_{L^\infty}\e.
\end{aligned}$$
Employing the almost same argument as above, we estimate $K_1^3$ as
\[
K_1^3 \leq C\|f_s^{\eta,\e}\|_{L^\infty}\lt(\eta' + \e' \rt) \quad \mbox{for} \quad \eta', \e' < 1/2.
\]
$\diamond$ Estimate of $K_1^2$: It follows from Lemma \ref{lem:roparg} that
$$
K_1^2 \leq C(1+\|f_s^{\eta,\e}\|_{L^\infty})\mathbb{E}\lt[|Y_s^{\eta,\e}-Y_s^{\eta',\e'}|+|W_s^{\eta,\e}-W_s^{\eta',\e'}|\rt].
$$ 
Combining the all of above estimates, we have
$$\begin{aligned}
&\mathbb{E}\lt[|Y^{\eta,\e}_t-Y^{\eta',\e'}_t|+\sqrt{\gamma^2+|W^{\eta,\e}_t-W^{\eta',\e'}_t|^2}\rt]\cr
&\quad \leq\gamma+ C\int_0^t\left ( 1+\|f_s^{\eta,\e}\|_{L^\infty} \right )\mathbb{E}\lt[|Y^{\eta,\e}_s-Y^{\eta',\e'}_s|+\sqrt{\gamma^2+|W^{\eta,\e}_s-W^{\eta',\e'}_s|^2}\rt]ds \cr
&\qquad +C\int_0^t\|f_s^{\eta,\e}\|_{L^\infty}(\eta+\e+\eta'+\e')\,ds,
\end{aligned}$$
where $C >0$ is independent of $\eta,\e,\eta', \e'$ and $\gamma$. We finally let $\gamma \to 0$ and use Gronwall's inequality together with the uniform bound estimate in Proposition \ref{prop_ext}, we have
\[
\sup_{t\in [0,T]}\mathbb{E}\lt[|Y^{\eta,\e}_t-Y^{\eta',\e'}_t|+|W^{\eta,\e}_t-W^{\eta',\e'}_t|\rt]\leq C(\eta+\e+\eta'+\e') \quad \mbox{for} \quad \eta,\e,\eta',\e' < 1/2,
\]
where the constant $C$ depends only on $V_{m},d, \|\mathcal{R}\|_{Lip}$, and $T$.

\noindent $\diamond$ {\bf Step B (Uniform-in-time estimate).-} Before taking the expectation to the equality \eqref{est_cau}, we first take the supremum over the time interval $[0,T]$. Then we can estimate similarly as in {\bf Step A} to estimate 
\[
\mathbb{E}\lt[\sup_{t\in [0,T]}\lt(|Y^{\eta,\e}_t-Y^{\eta',\e'}_t|+|W^{\eta,\e}_t-W^{\eta',\e'}_t|\rt)\rt]
\]
except the following term 
\[
\int_0^t  \left(\mathcal{R}(W^{\eta,\e}_s)-\mathcal{R}(W^{\eta',\e'}_s)\right)\lt(\frac{W^{\eta,\e}_s - W^{\eta',\e'}_s}{\sqrt{\gamma^2 + |W^{\eta,\e}_s - W^{\eta',\e'}_s|^2}} \rt) \cdot dB_s
\]
since this term will not vanish if we take the expectation after the supremum. For this, we use Doob's inequality to estimate it as
$$\begin{aligned}
&\E\lt[\sup_{\tau \leq t}\lt|\int_0^\tau  \left(\mathcal{R}(W^{\eta,\e}_s)-\mathcal{R}(W^{\eta',\e'}_s)\right)\lt(\frac{W^{\eta,\e}_s - W^{\eta',\e'}_s}{\sqrt{\gamma^2 + |W^{\eta,\e}_s - W^{\eta',\e'}_s|^2}} \rt) \cdot dB_s\rt|
 \rt]\cr
 &\quad \leq \E\lt[\sup_{\tau \leq t}\lt|\int_0^\tau  \left(\mathcal{R}(W^{\eta,\e}_s)-\mathcal{R}(W^{\eta',\e'}_s)\right)\lt(\frac{W^{\eta,\e}_s - W^{\eta',\e'}_s}{\sqrt{\gamma^2 + |W^{\eta,\e}_s - W^{\eta',\e'}_s|^2}} \rt) \cdot dB_s\rt|^2
 \rt]^{1/2}\cr
 &\quad \leq 2\|\mathcal{R}\|_{Lip}\E\lt[\int_0^t |W^{\eta,\e}_s - W^{\eta',\e'}_s |^2ds\rt]^{1/2}.
\end{aligned}$$
Then this together with the similar estimates as in {\bf Step A} for other terms gives
\[
\E\lt[\sup_{t\in[0,T]}\lt(|Y_t^{\eta,\e}-Y_t^{\eta',\e'}|+|W_t^{\eta,\e}-W_t^{\eta',\e'}|\rt)\rt]\leq C\lt(\eta+\e +\eta'+\e' \rt),
\]
for $\eta,\e,\eta',\e' < 1/2$, where $C >0$ is independent of the regularization parameters, $\eta,\e,\eta'$, and $\e'$.

\noindent $\diamond$ {\bf Step C (Passing to the limit).-} It follows from {\bf Step B} that there exists a limit process $(Y_t,W_t)_{t\in [0,T]}$ of the $(Y^{\eta,\e}_t,W^{\eta,\e}_t)_{t\in [0,T]}$ as $\eta,\e\rightarrow 0$ in $L^1(\Omega\times [0,T])$. Using the fact that
$$
\mathcal{W}_1(f_t^{\eta,\e},f_t^{\eta',\e'})\leq \E\lt[|Y_t^{\eta,\e}-Y_t^{\eta',\e'}|+|W_t^{\eta,\e}-W_t^{\eta',\e'}|\rt],
$$
we also deduce that $(f_t^{\eta,\e})_{t\in[0,T]}$ is a Cauchy sequence in $\mc([0,T]; \mathcal{P}_1(\R^d \times \R^d))$. Then, by completeness of this space, we define $f \in \mc([0,T]; \mathcal{P}_1(\R^d \times \R^d))$ by $f_t := \lim_{\eta,\e \to 0} f_t^{\eta,\e}$ for $t \in [0,T]$. We next show that the limiting process $(Y_t,W_t)_{t\in [0,T]}$ obtained before is the solution to the nonlinear SIEs \eqref{SDE:Tr} and its time marginal $f_t$ is the weak solution to the kinetic equation \eqref{sys_kin_Trun}. For this, it is enough to show that
\[
K_3^{\eta,\e}:= \E\lt[\lt|\int_0^t \lt(F^{\eta,\e}[f^{\eta,\e}_s](Y^{\eta,\e}_s,W^{\eta,\e}_s) - F[f_s](Y_s,W_s) \rt)ds \rt| \rt] \to 0
\]
and
\[
K_4^{\eta,\e}:= \E\lt[\lt|\int_0^t \lt(\mathcal{R}(W^{\eta,\e}_s) - \mathcal{R}(W_s) \rt) \cdot dB_s \rt|\rt] \to 0
\]
as $\eta,\e \to 0$. Using similar arguments for the term $K_1$ in {\bf Step A}, we easily estimate $K_3^{\eta,\e}$ as
\[
K_3^{\eta,\e} \leq C(\eta + \e) + C(1+\|f_s\|_{L^\infty})\E\lt[|Y_s^{\eta,\e}-Y_s|+|W_s^{\eta,\e}-W_s|\rt] \to 0 \quad \mbox{as} \quad \eta,\e \to 0.
\]
We next use the Lipschitz regularity of $\mathcal{R}$ together with It\^o isometry to get
$$\begin{aligned}
K_4^{\eta,\e} &\leq \E\lt[\lt|\int_0^t \lt(\mathcal{R}(W^{\eta,\e}_s) - \mathcal{R}(W_s) \rt) \cdot dB_s \rt|^2\rt]^{1/2} \cr
&= \E\lt[\int_0^t \lt(\mathcal{R}(W^{\eta,\e}_s) - \mathcal{R}(W_s) \rt)^2 ds \rt]^{1/2}\cr
&\leq \|\mathcal{R}\|_{Lip}\,\E\lt[\int_0^t |W^{\eta,\e}_s - W_s|^2\,ds \rt]^{1/2} \to 0 \quad \mbox{as} \quad \eta,\e \to 0.
\end{aligned}$$
This concludes that the liming process $(Y_t,W_t)_{t\in [0,T]}$ is the solution to the nonlinear SIEs \eqref{SDE:Tr}. For the weak solutions to the kinetic equation \eqref{sys_kin_Trun}, we take a test function $\phi \in \mc^\infty_c(\R^{2d})$ and apply It\^o's formula to that solution of the system  \eqref{SDE:Tr} as in the proof of Proposition \ref{prop_ext}. Then we can find that its time marginals $f_t$ solves that kinetic equation \eqref{sys_kin_Trun} in the distributional sense. We also easily check that solution $f \in L^\infty(\R^{2d} \times [0,T])$.
		
\noindent	$\diamond$ {\bf Step D (Stability estimate).-} If $(Y^i_t,W^i_t), i=1,2$ are two processes obtained above with the initial data $(Y^i_0,W^i_0), i=1,2$, respectively, and $\LL(Y^i_t,W^i_t) = f_t,i=1,2, \,t \in [0,T]$, then by employing almost same argument as in {\bf Step A}, we can easily find
$$\begin{aligned}
&\mathbb{E}\lt[|Y^1_t-Y^2_t|+\sqrt{\gamma^2+|W^1_t-W^2_t|^2}\rt] \cr
&\quad \leq \mathbb{E}\lt[|Y^1_0-Y^2_0|+\sqrt{\gamma^2+|W^1_0-W^2_0|^2}\rt]\cr
&\qquad + C\int_0^t\left ( 1+\|f_s^1\|_{L^\infty} \right )\mathbb{E}\lt[|Y^1_s-Y^2_s|+\sqrt{\gamma^2+|W^1_s-W^2_s|^2}\rt]ds.
\end{aligned}$$
Applying Gronwall's inequality and letting $\gamma \to 0$ conclude the desired stability estimate. 
\end{proof}

\begin{proof}[Proof of Theorem \ref{thm_SDE}] We already discussed the existence and uniqueness of strong solutions to the nonlinear SIEs \eqref{sys_NLS_T} and the existence of weak solutions to the kinetic equation \eqref{sys_kin_Trun} in Proposition \ref{prop:exNLSDER}. For the uniqueness of weak solutions to \eqref{sys_kin_Trun}, we use the fact that for any solutions to \eqref{sys_kin_Trun} can be represented as the time marginals of some solutions to \eqref{SDE:Tr}. More precisely, let $(\tilde{f}_t)_{t \ge 0}$ be a weak solution to the equation \eqref{sys_kin_Trun} with the initial data $\tilde{f}_0\in \mathcal{P}_1(\R^d \times \R^d)$ and $(f_t)_{t \ge 0}$ be another weak solution to \eqref{sys_kin_Trun} with the initial data $f_0\in (L^1 \cap L^\infty)(\R^d \times \R^d) \cap \pp_1(\R^d \times \R^d)$ such that $f\in L^{\infty}(0,T; (L^1 \cap L^\infty)(\R^d \times \R^d))\cap \mc([0,T];\pp_1(\R^d \times \R^d))$. Then we can find a probability space $( \Omega,\mathbb{P},\left (\mathcal{F}_t \right )_{t\geq 0},\mathcal{F})$, a Brownian motion $(B_t)_{t \ge 0}$ on that basis and a process $(X_t,V_t)_{t \ge 0}$ solution to \eqref{SDE:Tr}, which has the time marginal $\tilde{f}_t$ at any time $t \ge 0$. We refer to \cite{CS16} for more details on that. On that probability space, let $(Y_0,W_0)$ be a random variable on $\R^d \times \R^d$ with the law $f_0$ independent of $(B_t)_{t\geq 0}$ such that 
\[
\mathcal{W}_1(f_0,\tilde{f}_0)=\E\lt[|X_0-Y_0|+|V_0-W_0|\rt].
\]
Since the force fields has a kind of Lipschitz property for the given $f_t$, we can construct some stochastic process $(Y_t,W_t)_{t\geq 0}$ which is a solution to \eqref{SDE:Tr} with the initial condition $(Y_0,W_0)$, and same Brownian motion as before such that its time marginal is $f_t$. Then, by using the stability estimate for SIEs \eqref{SDE:Tr} in Proposition \ref{prop:exNLSDER}, we have
$$\begin{aligned}
	\mathcal{W}_1(f_t,\tilde{f}_t)&\leq \E\lt[ |Y_t-X_t|+|V_t-W_t|\rt]\cr
	&\leq  \E\lt[ |Y_0-X_0|+|V_0-W_0|\rt]e^{\int_0^t\|f_s\|_{L^1\cap L^{\infty}} ds}\cr
	&=\mathcal{W}_1(f_0,\tilde{f}_0)e^{\int_0^t\|f_s\|_{L^1\cap L^{\infty}} ds},
\end{aligned}$$
and this concludes the uniqueness of solutions to the kinetic equation \eqref{sys_kin_Trun}.
\end{proof}

	%
	%
	%
	%
	\section{Propagation of chaos: Proof of Theorem \ref{thm:PC_T}}\label{sec:mf}

We first show law of large numbers-like estimates whose proofs rely on the nice property of our communication weights in Lemma \ref{lem_est2}. 	Even though similar results observed in \cite{CS16, HS}, for the sake of completeness we give the details of it in Appendix \ref{app_a}.

\begin{lemma}\label{lem_(De)-Poiss}
Let $(Y_i,W_i)_{i=1,\cdots,N}$ be i.i.d. random variables with law $f\in \lt(\mathcal{P}_1\cap L^{\infty}\rt)(\R^{2d})$, and let $\mu_N$ be the associated empirical measure $\mu_N=\frac{1}{N}\sum_{i=1}^N\delta_{(Y_i,W_i)}$ and a pair $(Y,W)$ be independent of $(Y_i,W_i)_{i=1,\cdots,N}$. Let us also denote by $\rho$ the first marginal of $f$ and $\rho_N=\frac{1}{N}\sum_{i=1}^N\delta_{Y_i}$. Then we have
\begin{equation*}
\mathbb{E}\lt[\sup_{u\geq 0}\left|\int_{\R^d} \mb_{\Theta(W)^{u,+}}(y-Y)(\rho_N-\rho)(dy)\right|\rt] \leq \frac{C}{\sqrt{N}}
\end{equation*}
and 
$$\mathbb{E}\lt[\sup_{u\geq 0}\left|\int_{\R^d \times \R^d} \mb_{\Theta(W)^{u,+}}(y-Y)(\mu_N-f)(dy,dw)\right|\rt] \leq \frac{C}{\sqrt{N}}. $$
	\end{lemma}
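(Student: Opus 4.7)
First I would observe that the two displayed inequalities are essentially equivalent. Since the integrand $\mathbf{1}_{\Theta(W)^{u,+}}(y-Y)$ depends only on the $y$-variable, an application of Fubini and the identity $\int_{\R^d}h(y)\mu_N(dy,dw)=\int_{\R^d}h(y)\rho_N(dy)$ (and the analogous identity for $f$ and $\rho$) reduces the second statement to the first. So I would focus on proving
\[
\mathbb{E}\lt[\sup_{u\geq 0}\lt|\int_{\R^d} \mb_{\Theta(W)^{u,+}}(y-Y)(\rho_N-\rho)(dy)\rt|\rt] \leq \frac{C}{\sqrt{N}}.
\]

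The key structural point is that the family of sets $\{\Theta(w)^{u,+}\}_{u \geq 0}$ is \emph{nested} (increasing in $u$), so the supremum is over a monotone one-parameter family of indicators rather than an arbitrary VC-class. The plan is to condition on the pair $(Y,W)$ and convert the problem into a classical one-dimensional Kolmogorov--Smirnov statistic. For fixed $(y,w)\in \R^d\times\R^d$, I introduce the threshold variables
\[
\zeta_i(y,w):=\inf\{u\geq 0 : Y_i-y \in \Theta(w)^{u,+}\}\in [0,+\infty],\qquad i=1,\dots,N,
\]
so that by monotonicity
\[
\mb_{\Theta(w)^{u,+}}(Y_i-y)=\mb_{\{\zeta_i(y,w)\leq u\}}.
\]
Because $(Y_i,W_i)_{i=1,\dots,N}$ are i.i.d. and independent of $(Y,W)$, conditionally on $\{(Y,W)=(y,w)\}$ the variables $\zeta_1(y,w),\dots,\zeta_N(y,w)$ are i.i.d. real-valued. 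Writing $F_N^{y,w}(u):=\frac{1}{N}\sum_{i=1}^N \mb_{\{\zeta_i(y,w)\leq u\}}$ for the empirical CDF and $F^{y,w}(u):=\mathbb{P}(\zeta_1(y,w)\leq u)$ for the true CDF, the quantity inside the supremum becomes exactly $F_N^{y,w}(u)-F^{y,w}(u)$.

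Next I would invoke the Dvoretzky--Kiefer--Wolfowitz inequality (or prove the corresponding $L^1$-bound by a short symmetrization / chaining argument tailored to monotone classes), which gives, uniformly in the law of $\zeta_1(y,w)$,
\[
\mathbb{E}\lt[\sup_{u\geq 0}\lt|F_N^{y,w}(u)-F^{y,w}(u)\rt|\,\Big|\,Y=y,\,W=w\rt]\leq \frac{C}{\sqrt{N}}.
\]
Since the right-hand side does not depend on $(y,w)$, taking the expectation with respect to $(Y,W)$ yields the claim. The second statement then follows from the marginalization observation made at the outset.

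The main conceptual obstacle is the passage from a pointwise-in-$u$ law of large numbers (which trivially gives $1/\sqrt{N}$ via the Bernoulli variance) to the uniform bound. The naive grid-and-union-bound argument loses a factor $\sqrt{\log N}$, so the crucial ingredient is to exploit the nested (hence totally ordered) structure of $\{\Theta(W)^{u,+}\}_{u\geq 0}$ to cast the problem as a one-dimensional empirical CDF, where the sharp $1/\sqrt{N}$ rate is available through DKW. Assumption ${\bf (H2)}$(ii), ensuring $|\Theta(v)^{\e,+}|\leq C\e$ for small $\e$, together with $f\in L^\infty$, is what guarantees that the law of $\zeta_1(y,w)$ behaves well near the origin, but the DKW bound is distribution-free, so no additional control is actually needed for the $C/\sqrt{N}$ rate.
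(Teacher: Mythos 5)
Your argument is correct, but it takes a genuinely different route from the paper. Both proofs hinge on the same structural fact --- that $u\mapsto\Theta(w)^{u,+}$ is a nested family, so the supremum is over a totally ordered class of indicators --- but they exploit it differently. The paper Poissonizes: it replaces $N\rho_N$ by a Poisson random measure $\varrho_N$ of intensity $N\rho$, observes that $u\mapsto\int \mb_{\Theta(\upsilon)^{u,+}}(y-a)\,\overline{\varrho}_N(dy)$ is a martingale for the filtration generated by the growing sets, applies Doob's $L^2$ inequality, and then de-Poissonizes via $\|\varrho_N-N\rho_N\|_{TV}\leq|K_N-N|$. You instead introduce the threshold variables $\zeta_i(y,w)=\inf\{u:Y_i-y\in\Theta(w)^{u,+}\}$, which turns the quantity into a one-dimensional Kolmogorov--Smirnov statistic, and invoke DKW; conditioning on $(Y,W)$ is handled the same way in both proofs. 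Your route is more elementary (no Poisson random measures, no total-variation comparison) and the distribution-free nature of DKW makes it transparent that ${\bf (H2)}$(ii) plays no role here, which you correctly point out. The only technical point you should make explicit is that $\{u\geq 0: x\in\Theta(w)^{u,+}\}$ is a closed half-line $[\zeta,\infty)$ (this follows because $\partial^{u}\Theta(w)$ is the Minkowski sum of the compact set $\partial\Theta(w)$ with a closed ball, so the infimum is attained), which is what justifies $\mb_{\Theta(w)^{u,+}}(Y_i-y)=\mb_{\{\zeta_i\leq u\}}$ and the measurability of the supremum.

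One caveat concerns the second display. As printed in the lemma it involves $\Theta(W)$ with the \emph{external} variable $W$, and then your marginalization remark is right: the integrand depends only on $y$, so the second estimate is literally the first. However, the version proved in Appendix A and actually used in the proof of Lemma \ref{lem:coup_T} has $\Theta(w)$ with $w$ the \emph{integration} variable, i.e.\ the class of sets is $A_{a,u}=\{(y,w):y\in\Theta(w)^{u,+}+a\}\subset\R^{2d}$; for that version the reduction to $\rho_N$ fails. Fortunately your core argument survives unchanged: $A_{a,u}$ is still nested in $u$, so setting $\zeta_i=\inf\{u: Y_i-a\in\Theta(W_i)^{u,+}\}$ (now a function of the pair $(Y_i,W_i)$, still i.i.d.) again reduces the supremum to a one-dimensional empirical CDF and DKW applies. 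You should state the second part this way rather than via marginalization.
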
	
	
	\begin{lemma}
		\label{lem:LLN}
		Let $(Y_i)_{i=1,\cdots,N}$ be i.i.d. random variables with law $\rho\in \mathcal{P}(\R^d)$, and set $h$ a measurable function in $\R^d$ such that $h(Y_1-Y_2)$ is almost surely bounded from above by some constant $c_0$ and $h(0)=0$. Then we have 
		\[
		\mathbb{E}\lt[ \lt| \int_{\R^d} h(Y_1,Y_1-y)(\rho_N - \rho)(dy)\rt| \rt] \leq \frac{C}{\sqrt{N}},
		\] 
		where $\rho_N=\frac{1}{N}\sum_{i=1}^N\delta_{Y_i}$.
	\end{lemma}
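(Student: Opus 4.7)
The plan is to write the integral $\int_{\R^d} h(Y_1,Y_1-y)(\rho_N-\rho)(dy)$ as a centered empirical mean conditional on $Y_1$, and exploit that conditionally on $Y_1$ the remaining variables are i.i.d.\ and bounded. Expanding the empirical measure,
\[
\int_{\R^d} h(Y_1,Y_1-y)(\rho_N-\rho)(dy) = \frac{1}{N}\sum_{i=1}^N h(Y_1,Y_1-Y_i) - m(Y_1), \qquad m(Y_1):=\int_{\R^d} h(Y_1,Y_1-y)\,\rho(dy).
\]
The key observation is that the $i=1$ contribution equals $h(Y_1,0)=0$ by hypothesis, so the sum effectively runs over $i=2,\dots,N$.

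Next I would condition on $Y_1$. Under this conditioning, the variables $(Y_2,\dots,Y_N)$ remain i.i.d.\ with law $\rho$, and the summands $h(Y_1,Y_1-Y_i)$ are i.i.d.\ with conditional mean $m(Y_1)$ and are uniformly bounded by $c_0$. Therefore
\[
\E\!\left[\left(\frac{1}{N}\sum_{i=2}^N \bigl(h(Y_1,Y_1-Y_i)-m(Y_1)\bigr)\right)^{\!2}\,\Big|\,Y_1\right] = \frac{N-1}{N^2}\mathrm{Var}\bigl(h(Y_1,Y_1-Y_2)\,\big|\,Y_1\bigr) \leq \frac{c_0^2}{N}.
\]
Jensen's inequality applied to $x\mapsto x^2$ upgrades this to the corresponding $L^1$-bound $c_0/\sqrt{N}$ conditionally on $Y_1$. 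The discrepancy between $\frac{1}{N}\sum_{i=2}^N(h(Y_1,Y_1-Y_i)-m(Y_1))$ and $\frac{1}{N}\sum_{i=1}^N h(Y_1,Y_1-Y_i)-m(Y_1)$ is just $-m(Y_1)/N$, which is bounded in absolute value by $c_0/N \leq c_0/\sqrt{N}$.

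Taking expectations over $Y_1$ and combining the two bounds yields $\E|\cdots| \leq 2c_0/\sqrt{N}$, which is the claim with $C=2c_0$. No obstacle is anticipated: the argument is a textbook LLN estimate for bounded i.i.d.\ variables, and the only mildly delicate point is the presence of $Y_1$ in both the integrand and the empirical measure, which is neutralized by conditioning on $Y_1$ together with the assumption $h(0)=0$ that kills the diagonal term.
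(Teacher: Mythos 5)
Your proposal is correct and follows essentially the same route as the paper: condition on $Y_1$, use that $(Y_2,\dots,Y_N)$ remain i.i.d.\ given $Y_1$, kill the diagonal term via $h(0)=0$, and bound the conditional second moment by $O(c_0^2/N)$ before applying Jensen. If anything you are slightly more careful than the paper, which silently absorbs the residual $-m(Y_1)/N$ term that you track explicitly.
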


We next provide a quantitative estimate between velocity fields of the stochastic integral inclusion system \eqref{eq:SIIE} and the nonlinear SIEs \eqref{SDE:Tr}.
\begin{lemma}\label{lem:coup_T}
Let $(X_i,V_i)_{i=1,\cdots,N}$ be $N$ random variables on $\R^{2d}$ and $(Y_i,W_i)_{i=1,\cdots,N}$ be $N$ i.i.d random variables on $\R^{2d}$ with law $f\in L^{\infty}(\R^{2d})$ compactly supported in velocity in $\bv$. Then there exists a constant $C$ depending only on $V_{m},d$ such that
\[
\mathbb{E}\lt[\frac{1}{N}\sum_{i=1}^N \lt|\tilde{F}[\mu_N](X_i,V_i) -F[f](Y_i,W_i) \rt|\rt]\leq C\|f\|_{L^1\cap L^\infty}\frac{1}{N}\sum_{i=1}^N\E\lt[|X_i-Y_i|+|V_i-W_i|\rt]+ \frac{C}{\sqrt{N}},
\]
where $\mu_N=\frac{1}{N}\sum_{i=1}^N\delta_{(X_i,V_i)}$ and we abused the notation $| \cdot |$ for the distance between a set and a vector.
\end{lemma}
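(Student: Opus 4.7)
The plan is to introduce the empirical measure $\nu_N := \frac{1}{N}\sum_{j=1}^N \delta_{(Y_j, W_j)}$ of the i.i.d.\ sample and to decompose
\[
\tilde{F}[\mu_N](X_i, V_i) - F[f](Y_i, W_i) = A_i + B_i + C_i,
\]
where $A_i := \tilde{F}[\mu_N](X_i, V_i) - F[\mu_N](X_i, V_i)$, $B_i := F[\mu_N](X_i, V_i) - F[\nu_N](Y_i, W_i)$, and $C_i := F[\nu_N](Y_i, W_i) - F[f](Y_i, W_i)$. The piece $A_i$ captures the set-valued ambiguity on the generalized boundary, $B_i$ is a pure coupling error between the two empirical measures, and $C_i$ is the law-of-large-numbers fluctuation for the i.i.d.\ empirical.

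For $A_i$, I would use that $\mathcal{I}(z,v)=\mathbf{1}_{K(v)}(z)$ outside $\widetilde{\partial}K(v)\subset \Theta(v)$ together with the compact velocity support, obtaining $|A_i|\leq \frac{2V_m}{N}\sum_j \mathbf{1}_{\Theta(V_i)}(X_j-X_i)$, and then shift the argument from $(V_i,X_i,X_j)$ to $(W_i,Y_i,Y_j)$ by one application of $(\mathbf{H2})$(iv) and two applications of the elementary inclusion $\mathbf{1}_{A}(x)\leq \mathbf{1}_{A^{|x-y|,+}}(y)$; this produces an indicator on $\Theta(W_i)^{\epsilon_{ij},+}$ with $\epsilon_{ij}\leq C(|V_i-W_i|+|X_i-Y_i|+|X_j-Y_j|)$, evaluated at $Y_j-Y_i$. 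For $B_i$, I would introduce the empirical coupling $\pi_N:=\frac{1}{N}\sum_j \delta_{(X_j,V_j,Y_j,W_j)}$ and split exactly as in the proof of Lemma \ref{lem:roparg}: one sub-term is estimated directly by $|V_i-W_i|+\frac{1}{N}\sum_j|V_j-W_j|$; the second is handled by the rope argument of Lemma \ref{lem_est2} and yields indicators on $\Theta(W_i)^{2|X_i-Y_i|,+}$ and $\Theta(W_i)^{2|X_j-Y_j|,+}$; the third uses $(\mathbf{H2})$(iii) to produce an indicator on $\Theta(W_i)^{C|V_i-W_i|,+}$, again shifted to arguments of the form $Y_j-Y_i$ by the enlargement trick. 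For $C_i$, I would isolate the diagonal $j=i$ contribution (which is $O(1/N)$ since the integrand vanishes there) and apply Lemma \ref{lem:LLN} to the bounded map $(y,w)\mapsto \mathbf{1}_{K(W_i)}(y-Y_i)(w-W_i)$, obtaining $\E[|C_i|]\leq C/\sqrt{N}$.

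After summing over $i$ and taking expectation, the indicator bounds arising from $A_i$ and $B_i$ are closed using two ingredients. First, for fixed $(Y_i,W_i)$ the space marginal $\rho$ of $f$ belongs to $L^\infty$ by the compact velocity support of $f$, so $(\mathbf{H2})$(ii) gives $\int \mathbf{1}_{\Theta(W_i)^{\epsilon,+}}(y-Y_i)\rho(dy)\leq C\|f\|_{L^1\cap L^\infty}\,\epsilon$ for $\epsilon\in(0,1)$, the estimate being trivial for $\epsilon\geq 1$. Second, the residual empirical-versus-mean fluctuation is absorbed into $C/\sqrt{N}$ by the uniform-in-$u$ law of large numbers of Lemma \ref{lem_(De)-Poiss}. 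Averaging the $\epsilon_{ij}$ over $i$ and $j$ then produces the desired factor $\frac{1}{N}\sum_k \E[|X_k-Y_k|+|V_k-W_k|]$. The principal obstacle is that $\epsilon_{ij}$ is itself a sample-dependent random variable, so one cannot freely exchange expectation and enlargement radius; the supremum over $u\geq 0$ in Lemma \ref{lem_(De)-Poiss} is precisely what makes the LLN control uniform in this random radius and thereby closes the argument.
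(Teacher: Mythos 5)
Your proposal is correct and follows essentially the same route as the paper: the three-term decomposition via $\nu_N$ (set-valued ambiguity on $\widetilde{\partial}K\subset\Theta$, coupling error split as in Lemma \ref{lem:roparg} via Lemma \ref{lem_est2} and $\mathbf{(H2)}$, and the i.i.d.\ fluctuation via Lemma \ref{lem:LLN}) is exactly the paper's $I_1^i+I_2^i+I_3^i$, closed with $\mathbf{(H2)}$(ii) and the uniform-in-radius law of large numbers of Lemma \ref{lem_(De)-Poiss}. You also correctly identify the essential role of the supremum over $u\geq 0$ in handling the sample-dependent enlargement radii, which is the main subtlety of the argument.
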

\begin{proof} We first introduce an empirical measure $\nu_N=\frac{1}{N}\sum_{i=1}^N\delta_{(Y_i,W_i)}$. For any $z_i\in \tilde{F}[\mu^N](X_i,V_i)$ with $i \in \{1\,\cdots,N\}$, there exists some (random variables) $(\alpha_{i,j})_{j=1,\cdots,N}\in [0,1]^N$ such that
\[
z_i=\frac{1}{N}\sum_{j=1}^N\alpha_{i,j}(V_j-V_i).
\]
Then, for $i \in \{1,\cdots,N\}$, we get
$$\begin{aligned}
&\mathbb{E}\left [ \left|z - F[f](Y_i,W_i)\right| \right ]\cr
& \quad \leq \E\lt[ \frac{1}{N}\sum_{j=1}^N\lt(\alpha_{i,j}-\mb_{K(V_i)}(X_i-X_j)\rt)(V_j-V_i)\rt]\\
& \qquad +\mathbb{E}\left [ \left|F[\nu_N](Y_i,W_i)-F[\mu_N](X_i,V_i)\right| \right ]+\mathbb{E}\left [ \left|F[\nu_N](Y_i,W_i)-F[f](Y_i,W_i)\right| \right ]\\
&\quad =: I^i_1 + I^i_2 + I^i_3.\cr
\end{aligned}$$	
$\diamond$ Estimate of $I^i_1$: Note that $\alpha_{i,j}-\mb_{K(V_i)}(X_j-X_i)\neq 0$ only if $X_j-X_i\in \tilde{\pa}K(V_i)$. This together with the assumption $\bf{(H2)}$ (iii) yields
$$\begin{aligned} |\alpha_{i,j}-\mb_{K(V_i)}(X_j-X_i)|&\leq  C\mb_{\lt\{X_j-X_i\in \tilde{\pa}K(V_i)\rt\}}\leq C\mb_{\lt\{Y_j-Y_i\in \Theta(W_i)^{C|Y_j-X_j|+|Y_i-X_i|+|V_i-W_i|,+}\rt\}}\\
&\leq C\mb_{\lt\{Y_j-Y_i\in \Theta(W_i)^{C\lt(|Y_i-X_i|+|V_i-W_i|\rt),+}\rt\}}+C\mb_{\lt\{Y_i-Y_j\in \Theta(W_i)^{C|Y_j-X_j|,+}\rt\}}.
\end{aligned}$$
Thus we obtain
$$\begin{aligned}
I_1^i&\leq CV_{m}\E\lt[ \frac{1}{N}\sum_{j=1}^N \mb_{\Theta(W_i)^{|Y_i-X_i|+|V_i-W_i|,+}}(Y_i-Y_j)+\mb_{Y_i-Y_j\in \Theta(W_i)^{C|Y_j-X_j|,+}}\rt]\\
& \leq CV_{m}\E\lt[\int_{\R^d} \mb_{\lt\{y-Y_i\in \Theta(W_i)^{C\lt(|Y_i-X_i|+|V_i-W_i|\rt),+}\rt\}}\rho(dy)\rt]\cr
&\quad + CV_{m}\E\lt[ \frac{1}{N}\sum_{j=1}^N \mb_{\lt\{Y_i-Y_j\in \Theta(W_i)^{C|Y_j-X_j|,+}\rt\}}\rt] \\
&\quad +CV_{m}\E\lt[\sup_{u\geq 0}\lt|\int_{\R^d} \mb_{\lt\{y-Y_i\in \Theta(W_i)^{u,+}\rt\}}(\rho-\rho^N)(dy)\rt|\rt].
\end{aligned}$$
Summing $I_1^i$ over $i=1,\cdots,N$ and dividing it by $N$ lead
$$\begin{aligned}
\E\lt[\frac{1}{N}\sum_{i=1}^N I_1^i\rt]&\leq C\|\rho\|_{L^1\cap L^{\infty}}\frac{1}{N}\sum_{i=1}^N\lt(|X_i-Y_i|+|V_i-W_i|\rt) \cr
&\quad +C\E\lt[\sup_{u\geq 0}\lt|\int_{\R^d} \mb_{\lt\{y-Y_i\in \Theta(W_i)^{u,+}\rt\}}(\rho-\rho^N)(dy)\rt|\rt]\cr
&\quad +C\E\lt[\sup_{u\geq 0}\lt|\int_{\R^d \times \R^d} \mb_{\lt\{y-Y_i\in \Theta(w)^{u,+}\rt\}}(f-\nu^N)(dy,dw)\rt|\rt].
\end{aligned}$$
$\diamond$ Estimate of $I^i_2$: We divide it into three terms:
$$\begin{aligned}
I_2^i &=\mathbb{E}\left [ \left| \frac{1}{N}\sum_{j=1}^N\Bigl(\mb_{K(V_i)}(X_j-X_i)(V_i-V_j)-\mb_{K(W_i)}(Y_j-Y_i)(W_i-W_j)\Bigr)\right| \right ]\cr
&=\mathbb{E}\left [ \left| \frac{1}{N}\sum_{j=1}^N\mb_{K(V_i)}(X_j-X_i)\Bigl((V_i-V_j)-(W_i-W_j)\Bigr)\right| \right ]\cr
&\quad +\mathbb{E}\left [ \left| \frac{1}{N}\sum_{j=1}^N\Bigl(\mb_{K(V_i)}(X_j-X_i)-\mb_{K(W_i)}(X_j-X_i)\Bigr)(W_i-W_j)\right| \right ]\cr
&\quad + \mathbb{E}\left [ \left| \frac{1}{N}\sum_{j=1}^N\Bigl(\mb_{K(W_i)}(X_j-X_i)-\mb_{K(W_i)}(Y_j-Y_i)\Bigr)(W_i-W_j)\right| \right ]\cr
		&=:I_{2,1}^i + I_{2,2}^i + I_{2,3}^i.
\end{aligned}$$
$\bullet$ Estimate of $I^i_{2,1}$:
We easily find 
		\[
		I_{2,1}^i\leq \E[|V_i-W_i|]+\frac{1}{N}\sum_{j=1}^N\E[|V_j-W_j|] \quad \mbox{for} \quad i=1,\cdots,N.
		\]
$\bullet$ Estimate of $I^i_{2,2}$: Using the assumptions $\bf{(H2)}$ (ii)-(iii) and the fact that $\mb_{A}(x)\leq \mb_{A^{|x-y|,+}}(y)$ for $A\subset \R^d$ and $x,y\in \R^d$, we get 
$$\begin{aligned}
&\lt|\mb_{K(V_i)}(X_j-X_i)-\mb_{K(W_i)}(X_j-X_i)\rt| \cr
&\qquad \leq \mb_{K(V_i)\Delta K(W_i)}(X_j-X_i)\cr
&\qquad \leq \mb_{\Theta(W_i)^{C|V_i-W_i|,+}}(X_j-X_i)\cr
&\qquad \leq \mb_{\Theta(W_i)^{C|V_i-W_i|+|X_i-Y_i|+|X_j-Y_j|,+}}(Y_j-Y_i)\cr
&\qquad \leq \mb_{\Theta(W_i)^{2C|V_i-W_i|+2|X_i-Y_i|,+}}(Y_j-Y_i)+\mb_{\Theta(W_i)^{2|X_j-Y_j|,+}}(Y_j-Y_i).
\end{aligned}$$
Regarding the first term on the right hand side of the above last inequality, we obtain 
$$\begin{aligned}
&\frac{1}{N}\sum_{j=1}^N \mb_{\Theta(W_i)^{2C|V_i-W_i|+2|X_i-Y_i|,+}}(Y_j-Y_i)\cr
&\quad =\int_{\R^d}\mb_{\Theta(W_i)^{2C|V_i-W_i|+2|X_i-Y_i|,+}}(y-Y_i)\,\rho_N(dy)\\
&\quad\leq \int_{\R^d}\mb_{\Theta(W_i)^{2C|V_i-W_i|+2|X_i-Y_i|,+}}(y-Y_i)\,\rho(dy)\cr
&\qquad +\lt|\int_{\R^d}\mb_{\Theta(W_i)^{2C|V_i-W_i|+2|X_i-Y_i|,+}}(y-Y_i)\,(\rho_N - \rho)(dy)\rt|\\
&\quad \leq C(\|\rho\|_{L^1 \cap L^\infty})\bigl(|V_i-W_i|+|X_i-Y_i|\bigr)+\sup_{u\geq 0}\lt|\int_{\R^d}\mb_{\Theta(W_i)^{u,+}}(y-Y_i)\,(\rho_N - \rho)(dy)\rt|,
\end{aligned}$$
for each $i = 1,\cdots, N$, where we used the assumption $\bf{(H2)}$ (ii). Similarly, for each $j=1,\cdots,N$, we find
		$$\begin{aligned}
		&\frac{1}{N}\sum_{i=1}^N\mb_{\Theta(W_i)^{2|X_j-Y_j|,+}}(Y_j-Y_i)\cr
		&\quad =\int_{\R^d} \mb_{\Theta(w)^{2|X_j-Y_j|,+}}(Y_j-y)\,\nu_N(dy)\\
		& \quad \leq \int_{\R^d \times \R^d} \mb_{\Theta(w)^{2|X_j-Y_j|,+}}(Y_j-y)\,f(dy,dw) + \lt|\int_{\R^d \times \R^d} \mb_{\Theta(w)^{2|X_j-Y_j|,+}}(Y_j-y)(\nu_N - f)(dy,dw)\rt|\\
		&\quad \leq C\|f\|_{L^1\cap L^\infty}|X_j-Y_j| + \sup_{u\geq 0}\lt|\int_{\R^d \times \R^d} \mb_{\Theta(w)^{u,+}}(Y_j-y)(\nu_N - f)(dy,dw)\rt|.
		\end{aligned}$$
		Combining the above estimates together with Lemma \ref{lem_(De)-Poiss}, we have
		\[
		\frac{1}{N}\sum_{i=1}^N \E[I_{2,2}^i] \leq C\|f\|_{L^1\cap L^\infty} \frac{1}{N}\sum_{i=1}^N\E\lt[|X_i-Y_i|+|V_i-W_i|\rt]+ \frac{C}{\sqrt{N}},
		\]
		due to $\|\rho\|_{L^\infty} \leq C\|f\|_{L^\infty}$ for some $C > 0$ depending on $V_m$ and $d$. \newline
		
\noindent $\bullet$ Estimate of $I^i_{2,3}$: It follows from Lemma \ref{lem_est2} that
\[
\left | \mb_{K(W_i)}(X_j-X_i)-\mb_{K(W_i)}(Y_j-Y_i) \right |\leq \mb_{\partial^{|X_i-Y_i|}K(W_i)}(Y_j-Y_i)+\mb_{\partial^{|X_j-Y_j|}K(W_i)}(Y_j-Y_i).
\]
We then define an empirical measure $\rho_{N}:=\frac{1}{N}\sum_{j=1}^N\delta_{Y_i}$ to estimate that for each $i=1,\cdots,N$
$$\begin{aligned}
&\frac{1}{N}\sum_{j=1 }^N\mb_{\partial^{2|X_i-Y_i|}K(W_i)}(Y_j-Y_i)\cr
&\quad = \int_{\R^d} \mb_{\partial^{2|X_i-Y_i|}K(W_i)}(y-Y_i)\,\rho_{N}(dy)\\
& \quad \leq \int_{\R^d} \mb_{\partial^{2|X_i-Y_i|}K(W_i)}(y-Y_i)\,\rho(dy)+\left |\int_{\R^d} \mb_{\partial^{2|X_i-Y_i|}K(W_i)}(y-Y_i)\,(\rho_{N}-\rho)(dy)  \right |\\
& \quad \leq C \left \| \rho \right \|_{L^1\cap L^\infty} |X_i-Y_i|+\sup_{u\geq 0}\left |\int_{\R^d} \mb_{\partial^{u}K(W_i)}(y-Y_i)\,(\rho_{N}-\rho)(dy)  \right |.
\end{aligned}$$
Similarly, we get
$$\begin{aligned}
&\frac{1}{N}\sum_{i=1}^N\mb_{\partial^{2|Y_j-X_j|}K(W_i)}(Y_j-Y_i)\cr
&\quad =\int_{\R^d \times \R^d} \mb_{\partial^{2|Y_j-X_j|}K(w)}(Y_j-y)\,\nu_{N}(dy,dw)\\
&\quad \leq  \int_{\R^d \times \R^d} \mb_{\partial^{2|X_j-Y_j|}K(w)}(Y_i-y)\,f(dy,dw)+ \left |\int_{\R^d \times \R^d} \mb_{\partial^{2|X_j-Y_j|}K(w)}(Y_j-y)\,(\nu_{N}-f)(dy,dw)  \right |\\
&\quad \leq C\left \| f \right \|_{L^1\cap L^\infty}|X_j-Y_j|+\sup_{u\geq 0}\left |\int_{\R^d \times \R^d} \mb_{\partial^{u}K(w)}(Y_j-y)\,(\nu_{N}-f)(dy,dw)  \right |,
\end{aligned}$$
for each $j=1,\cdots,N$. From the above estimates together with Lemma \ref{lem_(De)-Poiss}, we obtain
\[
\frac{1}{N}\sum_{i=1}^N \E[I_{2,3}^i] \leq C\|f\|_{L^1\cap L^\infty}\frac{1}{N}\sum_{i=1}^N\E\lt[|X_i-Y_i|\rt]+ \frac{C}{\sqrt{N}}.
\]
Collecting the all of above estimates, we have
\[
\frac{1}{N}\sum_{i=1}^N \E[I_2^i] \leq C\|f\|_{L^1\cap L^\infty} \frac{1}{N}\sum_{i=1}^N\E\lt[|X_i-Y_i|+|V_i-W_i|\rt]+ \frac{C}{\sqrt{N}},
\]
where $C > 0$ is independent of $N$.

\noindent $\diamond$ Estimate of $I^i_3$: For each $i=1,\cdots,N$, we define $h_i(y,w)=-\mb_{K(W_i)}(y)w$ and rewrite $I_3^i$ as
\[
I_3^i=\mathbb{E}\left [ \left|\int_{\R^{2d}} h_i(Y_i-y,W_i-w)(\nu_N - f)(dy,dw)\right| \right ].
\]
Then since $|h_i(Y_i-Y_j)(W_j-W_i)|\leq 2V_{m}$ for any $j\neq i$, it follows from Lemma \ref{lem:LLN} that
\[
\frac1N \sum_{i=1}^N\E[I_3^i] \leq \frac{C}{\sqrt{N}}.
\]
Finally, we combine the above estimates to conclude our desired result.
\end{proof}
\begin{proof}[Proof of Theorem \ref{thm:PC_T}] Define $\mu_t^N,\nu_t^N$ by empirical measures associated to systems \eqref{eq:SIIE} and \eqref{sys_NLS_T}:
\[
\mu_t^N=\frac{1}{N}\sum_{i=1}^N\delta_{(X_t^i,V_t^i)} \quad \mbox{and} \quad \nu_t^N=\frac{1}{N}\sum_{i=1}^N\delta_{(Y_t^i,W_t^i)}.
\]
Then, for $i=1,\cdots,N$ and $t \in [0,T]$, by using It\^o's formula, we find
	$$\begin{aligned}
	\sqrt{\gamma^2+|V_t^i-W_t^i|^2}&=\gamma+\int_0^t \left \langle \frac{V_s^i-W_s^i}{\sqrt{\gamma^2+|V_s^i-W_s^i|^2}},\tilde{F}[\mu_s^N](X_s^i,V_s^i)-F[f_s](Y_s^i,W_s^i) \right \rangle ds\\
	&\quad +\sqrt{2\sigma}\int_0^t \frac{\mathcal{R}(V_s^i)-\mathcal{R}(W_s^i)}{\sqrt{\gamma^2+|V_s^i-W_s^i|^2}}\left \langle V_s^i-W_s^i, \,dB^i_s \right \rangle\\
	&\quad + \sigma\int_0^t\frac{d\gamma^2 + (d-1)|V_s^i-W_s^i|^2}{\sqrt{\gamma^2+|V_s^i-W_s^i|^2}^3} \lt(\mathcal{R}(V_s^i)-\mathcal{R}(W_s^i)\rt)^2ds,
	\end{aligned}$$
due to $V^j_0 = W^j_0$ for all $j \in \{1,\cdots,N\}$. We then take the averaged value of $\sqrt{\gamma^2+|V_t^i-W_t^i|^2}$ and the expectation to that to obtain
$$\begin{aligned}
\E\lt[ \frac{1}{N}\sum_{i=1}^N\sqrt{\gamma^2+|V_t^i-W_t^i|^2}\rt]&\leq C\gamma +\int_0^t \E\lt[\lt| \frac{1}{N}\sum_{i=1}^N  \tilde{F}[\mu_s^N](X_s^i,V_s^i)-F[f_s](Y_s^i,W_s^i)\rt|\rt]ds\cr
&\quad + C\int_0^t  \E\lt[ \frac{1}{N}\sum_{i=1}^N|V_s^i-W_s^i|\rt] ds,
\end{aligned}$$
due to the Lipschitz regularity of $\mathcal{R}$. Using Lemma \ref{lem:coup_T} yields
\[
\frac{1}{N}\sum_{i=1}^N\mathbb{E}\left[\sqrt{\gamma^2+|V_t^i-W_t^i|^2}\right]\leq C\gamma + C\|f_s\|_{L^1\cap L^\infty}\frac{1}{N}\sum_{i=1}^N \int_0^t \E \lt[\sqrt{\gamma^2+|V_s^i-W_s^i|^2}\rt]ds+\frac{C}{\sqrt{N}},
\]
where $C$ is a positive constant depending only on $\mathcal{R}, T$, and $V_{m}$. Moreover, by letting $\gamma \to 0$ and using the fact 
\[
\E\lt[ |X_t^i-Y_t^i|  \rt]\leq \int_0^t\E\lt[ |V_s^i-W_s^i|\rt]ds,
\]
we obtain
$$\begin{aligned}
&\E\lt[ \frac{1}{N}\sum_{i=1}^N\lt(|X_t^i-Y_t^i| + |V_t^i-W_t^i|\rt)\rt]\cr
&\quad \leq C\int_0^t\|f_s\|_{L^1\cap L^\infty} \E\lt[\frac{1}{N}\sum_{i=1}^N(|X_s^i-Y_s^i|+|V_s^i-W_s^i|)\rt]ds+\frac{C}{\sqrt{N}}.
\end{aligned}$$
	Applying Gronwall's inequality to the above inequality gives
	\[
	\E\lt[\mathcal{W}_1(\mu_t^N,\nu_t^N)\rt]\leq \frac{C}{\sqrt{N}}\exp\lt(\int_0^t\|f_s\|_{L^1\cap L^\infty}\,ds\rt).
	\] 
	Finally, we use the convergence estimate stated in Proposition \ref{prop_fg} together with the moment estimate in Remark \ref{rmk:mom} to find that for all $t\in[0,T]$
\[
\mathbb{E}\lt[\mathcal{W}_1(\mu_t^N,f_t)\rt] \leq C \left\{ \begin{array}{ll}
N^{-1/2} + N^{-(q-1)/q} & \textrm{if $2 > d$ and $q \neq 2$}, \\[2mm]
N^{-1/2}\log(1+N) + N^{-(q-1)/q} & \textrm{if $2 = d$ and $q \neq 2$},\\[2mm]
N^{-1/d} + N^{-(q-1)/q} & \textrm{if $2 < d$ and $q \neq d/(d-1)$}.
\end{array} \right.
\]
This completes the proof.
\end{proof}

	%
	%
	%
	%

\appendix
	
\section{Law of large numbers-like estimates: Proofs of Lemmas \ref{lem_(De)-Poiss} and \ref{lem:LLN}}\label{app_a}

\begin{proof}[Proof of Lemma \ref{lem_(De)-Poiss}] Let $(Y_n,W_n)_{n\in\mathbb{N}}$ be a sequence of independent random variables with law $f\in \mathcal{P}(\mathbb{R}^{2d})$ and independent of the $(Y,W)$ and $K_N$ be a Poisson random variable of parameter $N$ independent of $(Y_n,W_n)_{n\in\mathbb{N}}$. Define  $\varrho_N $ the following random measure 
		\begin{equation*}
			\varrho_N = \sum_{i=1}^{K_N}\delta_{Y_i}, 
		\end{equation*}
		where $\delta_x(A)$ is a degenerate measure located in $x$, i.e., $\delta_x(A) = \mb_A(x)$. Then $\varrho_N $ is a Poisson random measure of intensity $N\rho$. Then it is straightforward to get $\left\|\varrho_N -N\rho_N\right\|_{TV} \leq |K_N-N|$, where $\| \cdot \|_{TV}$ represents the total variation distance of probability measures. Then we get 
\begin{equation*}
\begin{split}
&\sup_{u\geq 0}\left|\int_{\R^d} \mb_{\Theta(\upsilon)^{u,+}}(y-a)(\rho_N-\rho)(dy)\right|\cr
&\leq \frac{1}{N}\sup_{u\geq 0}\left|\int_{\R^d} \mb_{\Theta(\upsilon)^{u,+}}(y-a)(\varrho_N -N\rho)(dy)\right|  +\frac{1}{N}\sup_{u\geq 0}\left|\int_{\R^d} \mb_{\Theta(\upsilon)^{u,+}}(y-a)(\varrho_N -N\rho_N)(dy)\right|\\
				&\leq \frac{1}{N}\sup_{u\geq 0}\left|\int_{\R^d} \mb_{\Theta(\upsilon)^{u,+}}(y-a)\overline{\varrho}_N(dy)\right|+\frac{1}{N}\left\|\varrho_N -N\rho_N\right\|_{TV}\\
				&\leq  \frac{1}{N}\sup_{u\geq 0} \left | \mathcal{M}^{N}_u  \right |+\frac{|K_N-N|}{N}, \quad \mathbb{P}-a.s.,
			\end{split}
		\end{equation*}
		for all $(a,\upsilon)$, where
		\begin{equation*}
			\mathcal{M}^{N}_u=\int_{\R^d} \mb_{\Theta(\upsilon)^{u,+}}(y-a)\overline{\varrho}_N(dy) \quad \mbox{and} \quad \overline{\varrho}_N = \varrho_N  - N\rho.
		\end{equation*}
		Note that $(\mathcal{M}^{N}_u)_{u\geq 0}$ is a martingale with respect to the filtration $(\mathcal{F}_{u})_{u\geq 0}$ defined by:
		\begin{equation*}
			\mathcal{F}_{u}=\sigma\left\{\int_{\R^d} h(y) \varrho_N(dy) \ | \ supp \ h \in \Theta(\upsilon)^{u,+}+a \right\},
		\end{equation*}
		since $\varrho_N$ is the Poisson random measure. We then use Doob's inequality to obtain
		\begin{equation*}
			\mathbb{E}\lt[\sup_{u\geq 0}\left|\mathcal{M}^{N}_u\right|\rt]\leq \left( \mathbb{E}\lt[\sup_{u\geq 0}\left|\mathcal{M}^{N}_u\right|^2\rt]\right)^{1/2}\leq \sqrt{2}\left( \mathbb{E}\lt[\left|\mathcal{M}^{N}_{\infty}\right|^2\rt]\right)^{1/2}=  \sqrt{2}N^{1/2},
		\end{equation*}
		where we also used the fact that $\mathcal{M}^{N,a}_{\infty}=M_N(\R^d)-N$ and $M_N(\R^d)$ is a Poisson random variable of parameter $N\rho(\mathbb{R}^d)=N$ so that $\mathbb{V}(\mathcal{M}^{N,a}_{\infty})=N$. Moreover, by the property of Poisson random variable, we get
		\[
		\mathbb{E}\lt[|K_N-N|\rt]\leq (\mathbb{V}(K_N))^{1/2}= N^{1/2}.
		\]
		Combining the above estimates, we find
		\begin{equation}\label{EstFixRan}
			\mathbb{E}\lt[\sup_{u\geq 0}\left|\int_{\R^d} \mb_{\Theta(\upsilon)^{u,+}}(y-a)(\rho_N-\rho)(dy)\right|\rt]\leq \frac{3}{\sqrt{N}} \quad \mbox{for all} \quad a \in \R^d.
		\end{equation}
		We then use the classical property of conditional expectation to get
		\[
		\mathbb{E}\lt[\sup_{u\geq 0}\left|\int_{\R^d} \mb_{\Theta(\upsilon)^{u,+}}(y-Y)(\rho_N-\rho)(dy)\right|\rt]=	\mathbb{E}\lt[\mathbb{E}\lt[\sup_{u\geq 0}\left|\int_{\R^d} \mb_{\Theta(\upsilon)^{u,+}}(y-Y)(\rho_N-\rho)(dy)\right|\ | \ (Y,W) \rt]\rt].
		\]
		This and together with \eqref{EstFixRan} and the fact that $(Y,W)$ is independent of the $(Y_i,W_i)_{i=1,\cdots, N}$ yields
		\[
		\mathbb{E}\lt[\sup_{u\geq 0}\lt|\int_{\R^d} \mb_{\Theta(\upsilon)^{u,+}}(y-Y)(\rho_N-\rho)(dy)\rt|\ | \ (Y,W) \rt]\leq \frac{3}{\sqrt{N}}.
		\]
		Similarly, we define 
		$$ 
		M_N=\sum_{i=1}^{K_N} \delta_{(Y_i,W_i)}, 
		$$
		then we obtain
		$$\begin{aligned}
		&\sup_{u\geq 0}\left|\int_{\R^d \times \R^d} \mb_{\Theta(w)^{u,+}}(y-a)(\mu_N-f)(dy,dw)\right|\cr
		&\quad  \leq \frac{1}{N}\sup_{u\geq 0}\left|\int_{\R^d \times \R^d} \mb_{\Theta(w)^{u,+}}(y-a)(M_N-Nf)(dy,dw)\right|\cr
		&\qquad +\frac{1}{N}\sup_{u\geq 0}\left|\int_{\R^d \times \R^d} \mb_{\Theta(w)^{u,+}}(y-a)(M_N-N\mu_N)(dy,dw)\right|\\
		&\quad \leq \frac{1}{N}\sup_{u\geq 0}\left|\int_{\R^d \times \R^d} \mb_{\Theta(w)^{u,+}}(y-a)\overline{M}_N(dy,dw)\right|+\frac{1}{N}\left\|M_N-N\mu_N\right\|_{TV}\\
		&\quad \leq  \frac{1}{N}\sup_{u\geq 0} \left | \widetilde{\mathcal{M}}^{N}_u  \right |+\frac{|K_N-N|}{N},
		\end{aligned}$$
		with 
		\[
		\widetilde{\mathcal{M}}^{N}_u :=\int_{\R^d \times \R^d} \mb_{\Theta(w)^{u,+}}(y-a)\overline{M}_N(dy,dw)= \overline{M}_N (A_{a,u}), \quad A_{a,u}:=\{(y,w)\in \R^{2d} \ | \ y\in \Theta(w)^{u,+}+a \}.
		\]
Next we introduce a filtration $\widetilde{\mathcal{F}}_u=\sigma\{ r\leq u, \  M_N(A_{a,r}) \}$ and use the similar arguments as the above to deduce that $(\widetilde{\mathcal{M}}^{N}_u)_{u\geq 0}$ is a martingale. Then the rest of proof is almost same as before. 

\end{proof}

\begin{proof}[Proof of Lemma \ref{lem:LLN}] Note that the random variables $Y_2,\cdots,Y_N$ conditioned to $Y_1$ are still i.i.d. Then, by standard property on the conditional expectation, we estimate
		$$\begin{aligned}
		\mathbb{E}\lt[ \lt| \int_{\R^d} h(Y_1-y)(\rho_N - \rho)(dy)\rt|\rt] &= \mathbb{E}\lt[\mathbb{E}\lt[ \lt| \int_{\R^d} h(Y_1-y)(\rho_N - \rho)(dy)\rt|\, | \, Y_1 \rt]  \rt] \\
		&\leq \mathbb{E}\lt[ \left(\mathbb{E}\lt[ \frac{1}{N^2}\sum_{i=2}^N  |h(Y_1-Y_i)-h*\rho(Y_1)|^2  \, | \, Y_1 \rt]\right)^{1/2}\rt]\\
		&\leq \mathbb{E}\lt[ \sqrt{\frac{N-1}{N^2}}\left(\mathbb{E}\lt[ |h(Y_1-Y_2)-h*\rho(Y_1)|^2  \, | \, Y_1 \rt]\right)^{1/2} \rt].
		\end{aligned}$$	
		On the other hand, since $X_2$ is independent of $X_1$, we get
		\[
		\mathbb{E}\bigl[ |h(Y_1-Y_2)-h*\rho(Y_1)|^2  \, | \, Y_1 \bigr]=\int_{\R^d} |h(Y_1-y)-h*\rho(Y_1)|^2\rho(dy)\leq 4 c_0^2.
		\] 
		Here we also used the facts that $h(Y_1-Y_2)$ is almost surely bounded from above by $c_0$ and $Y_1$ is independent of $Y_2$ to obtain $|h*\rho(Y_1)|=\mathbb{E}\lt[ |h(Y_1-Y_2)| \, |\, Y_1 \rt]\leq c_0$.
\end{proof}

	%
	%
	%
	%
	\section*{Acknowledgments}
	\small{The authors warmly thank Professor Maxime Hauray for helpful discussion and valuable comments. YPC was supported by INHA UNIVERSITY Research Grant. (INHA-55447)}
	
	%
	%
	%
	%

\end{document}